\newtheorem{theorem}{Theorem}[section]
\newtheorem{corollary}[theorem]{Corollary}
\newtheorem{lemma}[theorem]{Lemma}
\newtheorem{proposition}[theorem]{Proposition}
\newtheorem{conjecture}[theorem]{Conjecture}
\newtheorem{question}[theorem]{Question}
\theoremstyle{definition}
\newtheorem{definition}[theorem]{Definition}
\theoremstyle{definition}
\newtheorem{remark}[theorem]{Remark}
\theoremstyle{definition}
\newtheorem{example}[theorem]{Example}
\numberwithin{equation}{section}
\newcolumntype{V}{>{\centering\arraybackslash} m{.095 \linewidth} }
\newcommand{\av}{\mathbf{a}}
\newcommand{\bv}{\mathbf{b}}
\newcommand{\ab}{\av\bv}
\DeclareMathOperator{\Id}{Id}
\DeclareMathOperator{\RG}{RG}
\begin{document}
\title{Box polynomials and the excedance matrix}
\author{Richard Ehrenborg, Alex Happ, Dustin Hedmark, and Cyrus Hettle}
\date{}
\maketitle
\begin{abstract}
We consider properties of the box polynomials, a one variable polynomial defined over all integer
partitions $\lambda$ whose 
Young diagrams fit in an $m$ by $n$ box.
We show that these polynomials can be expressed by
the finite difference operator applied to the power $x^{m+n}$. 
Evaluating box polynomials yields a variety of 
identities involving set partition enumeration. 
We extend the latter identities using restricted 
growth words and a new operator called the fast 
Fourier operator, and consider connections 
between set partition enumeration and the 
chromatic polynomial on graphs. We also give 
connections between the box polynomials and the 
excedance matrix, which encodes combinatorial 
data from a noncommutative quotient algebra 
motivated by the recurrence for the excedance 
set statistic on permutations.
\end{abstract}

\section{Introduction}
\label{section_box_polynomial_intro}

In this paper we examine a one-variable polynomial $B_{m,n}(x)$, which
we call the box polynomial.  It is defined by summing over all integer
partitions whose Young diagrams fit in an $m$ by $n$ box (or grid).
We show that this polynomial is related to set partition enumeration
in several ways.  We also obtain bounds for the roots of the box
polynomial. Furthermore, it is also related to the excedance set
statistics from permutation enumeration.

In Section~\ref{section_box_polynomial} we derive basic properties of the box polynomial,
including an alternative description as a repeated application of the
difference operator to a power.  In Section~\ref{section_box_polynomial_chromatic} we show how the
chromatic polynomial of a graph determines the number of set
partitions on the vertex set such that the blocks are independent
sets.  Using the chromatic polynomial of a cycle we obtain that the
number of set partitions such that $i$ and $i+1$ modulo $n$ are in
different blocks is given by a box polynomial evaluated at $-1$.  We
also use the chromatic polynomial of other graphs to obtain
enumerative results for set partitions.  This yields an interpretation
for the box polynomial evaluated at the positive integers.  In
Section~\ref{section_box_polynomial_generating_functions} we use generating functions and restricted growth words to
obtain the results. Here we show that the box polynomial evaluated at
$x=-n/2$ enumerates partitions where the block sizes are odd.
Section~\ref{section_bijective_interlude} is an interlude where we offer a few bijections.
In Section~\ref{section_fast_fourier} we introduce a polynomial operator that we call the Fast
Fourier operator. We show that the Fast Fourier Operator has
connections to both the box polynomials and set partition
enumeration.  Section~\ref{section_box_polynomial_bounds_on_roots} is dedicated to the roots of the box
polynomial.  By using the difference operator description of the box
polynomial and a result of P\'olya, the roots all lie on a vertical
line in the complex plane.  We also bound the roots.  In Section~\ref{section_excedance_matrix} we
review the excedance set statistic and how it connects with the box
polynomials.

We end in the concluding remarks by offering a plethora of open
questions about box polynomials, set partition enumeration and the
excedance set statistic.

\section{Box polynomials}
\label{section_box_polynomial}

We begin by introducing the box polynomials.
\begin{definition}
The {\em box polynomial} $B_{m,n}(x)$ is defined by the sum
\[
B_{m,n}(x)
=
\sum_{\lambda \subseteq m \times n} \prod_{i=1}^{m}(x+\lambda_i) ,
\]
where the sum is over all partitions
$\lambda = (n \geq \lambda_{1} \geq \lambda_{2} \geq \cdots
\geq \lambda_{m} \geq 0)$,
that is, all partitions with $m$ nonnegative parts, each at most $n$.
\label{definition_box}
\end{definition}
\begin{example}
\label{example_box_2_2}
Let $m=n=2$. Using Table~\ref{table_box_definition} we see that 
$B_{2,2}(x)=6x^2+12x+7$.

\end{example}

\begin{table}
\begin{center}
{\renewcommand{\arraystretch}{1.3}
\begin{tabular}{ V | l }
  $\lambda$ & \hspace*{17mm}$\prod_{i=1}^{m}(x+\lambda_i)$\\
  \hline\\[-12pt]
 \begin{tikzpicture}[scale=0.45]
	\draw[gray!40] (0,0) grid (2,2);
\end{tikzpicture} 
									& $(x+0)\cdot (x+0) = x^2$\\
   \begin{tikzpicture}[scale=0.45]
	\draw[gray!40] (0,0) grid (2,2);
	\draw (0,1) grid (1,2);
\end{tikzpicture} 
									& $(x+1)\cdot (x+0)=x^2 +x$  \\
   \begin{tikzpicture}[scale=0.45]
	\draw[gray!40] (0,0) grid (2,2);
	\draw (0,0) grid (1,2);
\end{tikzpicture} 
									& $(x+1)\cdot(x+1)=x^2+2x+1$\\
   \begin{tikzpicture}[scale=0.45]
	\draw[gray!40] (0,0) grid (2,2);
	\draw (0,1) grid (2,2);
\end{tikzpicture} 
									& $(x+2)\cdot (x+0)=x^2+2x$  \\
  \begin{tikzpicture}[scale=0.45]
	\draw[gray!40] (0,0) grid (2,2);
	\draw (0,0) grid (1,2) grid (2,1);
\end{tikzpicture} 
									& $(x+2)\cdot (x+1)=x^2+3x+2$  \\
   \begin{tikzpicture}[scale=0.45]
	\draw[gray!40] (0,0) grid (2,2);
	\draw (0,0) grid (2,2);
\end{tikzpicture} 
									& $(x+2)\cdot (x+2)=x^2+4x+4$ \\  
      \hline & \hspace*{12.7mm}$B_{2,2}(x)=6x^2+12x+7$
\end{tabular}
}

\end{center}
\caption{The box polynomial $B_{2,2}(x)$. The table lists all partitions $\lambda$ that fit in the $2\times 2$ box, upper left justified.}
\label{table_box_definition}
\end{table}
Another way to express the box polynomials
is in terms of the complete symmetric function $h_{m}$.
It follows from Definition~\ref{definition_box} that
\begin{equation}
B_{m,n}(x)
=
h_{m}(x,x+1, \ldots, x+n).
\label{equation_box_stirling}
\end{equation}
Directly from this equation we have that
the box polynomial evaluated at $x=0$ 
and $x=1$ yields Stirling numbers of the second kind,
that is,
\begin{align}
B_{m,n}(0) & = S(m+n,n) , 
\label{equation_box_polynomial_at_zero} \\
B_{m,n}(1) & = S(m+n+1,n+1) .
\label{equation_box_polynomial_at_one}
\end{align}

\begin{lemma}
The box polynomials satisfy
$B_{m,n}(-n-x) = (-1)^{m} \cdot B_{m,n}(x)$.
\label{lemma_even_odd}
\end{lemma}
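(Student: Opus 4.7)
The plan is to use the symmetric function description from equation~\eqref{equation_box_stirling}, namely
\[
B_{m,n}(x) = h_m(x, x+1, \ldots, x+n).
\]
Substituting $x \mapsto -n-x$ yields $B_{m,n}(-n-x) = h_m(-n-x, -n-x+1, \ldots, -x)$. The crucial observation is that the list of arguments $\{-n-x+i : 0 \leq i \leq n\}$ is precisely the negation of $\{x+i : 0 \leq i \leq n\}$, merely reordered. Since $h_m$ is a symmetric function that is homogeneous of degree $m$, the reordering is immaterial and we may pull out a global factor of $(-1)^m$, giving $(-1)^m h_m(x, x+1, \ldots, x+n) = (-1)^m B_{m,n}(x)$.

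A more combinatorial route, working directly from Definition~\ref{definition_box}, uses the complementation involution on partitions fitting in the $m \times n$ box. One factors a sign out of each linear factor to obtain
\[
B_{m,n}(-n-x) = \sum_{\lambda \subseteq m \times n} \prod_{i=1}^m (-n-x + \lambda_i) = (-1)^m \sum_{\lambda} \prod_{i=1}^m \bigl(x + (n - \lambda_i)\bigr).
\]
For $\lambda = (\lambda_1, \ldots, \lambda_m)$ in the box, define $\lambda^c = (n - \lambda_m, n - \lambda_{m-1}, \ldots, n - \lambda_1)$, which is again a partition in the box, and $\lambda \mapsto \lambda^c$ is an involution on such partitions. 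Since the inner product does not depend on the order of its factors, $\prod_i (x + n - \lambda_i) = \prod_i (x + \lambda^c_i)$, and reindexing the outer sum by $\mu = \lambda^c$ produces $(-1)^m B_{m,n}(x)$.

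Neither approach presents any real obstacle; both are one-step verifications. The only conceptual content is the recognition that the substitution $x \mapsto -n-x$ negates the arithmetic progression $x, x+1, \ldots, x+n$, or equivalently that it implements Young-diagram complementation in the sum defining $B_{m,n}$.
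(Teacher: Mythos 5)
Your first argument is exactly the paper's proof: apply the substitution to $B_{m,n}(x)=h_m(x,x+1,\ldots,x+n)$ and use that $h_m$ is symmetric and homogeneous of degree $m$ to extract $(-1)^m$. Both it and your alternative complementation involution are correct; the first matches the paper's one-line argument, so there is nothing to add.
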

\begin{proof}
Follows from equation~\eqref{equation_box_stirling}
using that the complete symmetric function $h_{m}$
is homogeneous of degree $m$.
\end{proof}

Directly from equations~\eqref{equation_box_polynomial_at_zero}
and~\eqref{equation_box_polynomial_at_one} we have:
\begin{align}
B_{m,n}(-n) & = (-1)^{m} \cdot S(m+n,n) ,
\label{equation_box_polynomial_at_minus_n}\\
B_{m,n}(-n-1) & = (-1)^{m} \cdot S(m+n+1,n+1) .
\label{equation_box_polynomial_at_minus_n_minus_one}
\end{align}

Furthermore, with equation~\eqref{equation_box_stirling}
we obtain the generating function
\begin{align}
\sum_{m \geq 0} B_{m,n}(x) \cdot t^{m}
& =
\frac{1}{(1 - x \cdot t) \cdot (1 - (x+1) \cdot t) \cdots (1 - (x+n) \cdot t)} . 
\label{equation_box_generating_function}
\end{align}

Note that the box polynomial $B_{m,n}(x)$ has degree $m$,
and the sum defining this polynomial has $\binom{m+n}{m}$ terms, since each partition
$\lambda$ fitting in the $m$ by $n$ box can be
specified uniquely by a lattice walk from
$(0,0)$ to $(m,n)$ with east and north steps. 
This observation shows that the leading coefficient
of the box polynomial $B_{m,n}(x)$ is given 
by $[x^m]B_{m,n}(x)=\binom{m+n}{m}$.

Just as 
the binomial coefficients
satisfy the Pascal recursion
we have the following recursion for 
the box polynomials.
\begin{proposition}
The box polynomial $B_{m,n}(x)$ satisfies the recursion
\[
B_{m,n}(x)
=
x \cdot B_{m-1,n}(x)
+
B_{m,n-1}(x+1) ,
\]
with initial conditions $B_{m,0}(x) = x^{m}$ and $B_{0,n}(x) = 1$.
\label{proposition_box_recursion}
\end{proposition}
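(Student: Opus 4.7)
The plan is to prove the recursion by splitting the defining sum of $B_{m,n}(x)$ according to whether the last part $\lambda_{m}$ of $\lambda$ equals $0$ or is at least $1$, matching each case with one of the two summands on the right-hand side. The initial conditions will follow immediately from the definition: when $n=0$ the only partition is the zero partition and the product is $x^m$, and when $m=0$ the product is empty so $B_{0,n}(x)=1$.

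First I would treat the sum $\sum_{\lambda \subseteq m\times n,\, \lambda_m = 0} \prod_{i=1}^{m}(x+\lambda_i)$. Here the factor $x+\lambda_m = x$ pulls out in front, and the remaining parts $\lambda_1\geq\cdots\geq\lambda_{m-1}\geq 0$ (each at most $n$) range over all partitions fitting in an $(m-1)\times n$ box. The remaining sum is therefore $B_{m-1,n}(x)$, accounting for the term $x\cdot B_{m-1,n}(x)$.

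Next I would handle the sum $\sum_{\lambda\subseteq m\times n,\, \lambda_m \geq 1} \prod_{i=1}^{m}(x+\lambda_i)$ by subtracting $1$ from every part. Setting $\mu_i = \lambda_i - 1$ gives a bijection between partitions $\lambda$ with $\lambda_m\geq 1$ fitting in the $m\times n$ box and partitions $\mu$ fitting in the $m\times(n-1)$ box, under which $\prod_{i=1}^m(x+\lambda_i) = \prod_{i=1}^m((x+1)+\mu_i)$. Summing over $\mu\subseteq m\times(n-1)$ gives $B_{m,n-1}(x+1)$, as required.

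Since this is essentially a case-split on whether $\lambda_m$ is zero, there is no real obstacle. An alternative one-line plan would invoke equation~\eqref{equation_box_stirling} together with the standard recursion $h_m(y_0,\ldots,y_n) = y_0\cdot h_{m-1}(y_0,\ldots,y_n) + h_m(y_1,\ldots,y_n)$ for the complete homogeneous symmetric function specialized at $y_i = x+i$; I would mention this symmetric-function viewpoint as a remark, but present the direct combinatorial splitting as the main proof because it exhibits the bijection behind each summand.
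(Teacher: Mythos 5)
Your proof is correct and follows essentially the same route as the paper: the paper also splits on whether $\lambda_m = 0$ (pulling out the factor $x$ to get $x \cdot B_{m-1,n}(x)$) or $\lambda_m \geq 1$ (subtracting $1$ from every part to get $B_{m,n-1}(x+1)$). The symmetric-function remark is a nice addition but not part of the paper's argument.
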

\begin{proof}
The initial conditions are straightforward to verify.
The recursion follows the same reasoning as the Pascal recursion.
The last part $\lambda_{m}$ of the partition $\lambda$ fitting in the $m$ by $n$ grid 
is either $0$ or it is greater than or equal to~$1$.
In the first case we have
$\lambda = \mu\circ (0)$ where $\mu$ is a partition
contained in a $(m-1) \times n$ box
and $\circ$ denotes concatenation.
Here we have
$\prod_{i=1}^{m} (x+\lambda_{i})
=
x \cdot \prod_{i=1}^{m-1} (x+\mu_{i})$.
Summing over all $\mu$ yields
$x \cdot B_{m-1,n}(x)$.
In the second case,
$\lambda = (\nu_{1}+1, \nu_{2}+1, \ldots, \nu_{m}+1)$
where $\nu$ is  contained in a $m \times (n-1)$ box.
Now
$\prod_{i=1}^{m} (x+\lambda_{i})
=
\prod_{i=1}^{m-1} (x+1+\nu_{i})$
and summing over all $\nu$ yields
$ B_{m,n-1}(x+1)$.
\end{proof}

We continue by giving 
a different expression of the box polynomials, namely as the image of
the forward difference operator.
We begin by defining the relevant 
polynomial operators. 
Let $E^{a}$ be the shift operator given by 
$E^{a}(p(x))=p(x+a)$
and for brevity we write $E$ for $E^{1}$.
Let $\Delta$ be the forward difference operator
defined by $\Delta=E-\Id$,
so, $\Delta(p(x))=p(x+1)-p(x)$.
Note that the difference operator
is shift invariant, that is,
$\Delta E^{a} = E^{a} \Delta$.
Finally, let
$\underline{x}$ be the operator which multiplies 
by $x$, that is,
$\underline{x}(p(x)) = x \cdot p(x)$.

\begin{lemma}
For a non-negative integer~$n$,
the following identity holds 
$\Delta^{n} \underline{x}
=
\underline{x} \Delta^{n}
+
n \cdot E \Delta^{n-1}.$
\label{lemma_Delta_x}
\end{lemma}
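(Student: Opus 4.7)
The plan is to prove the identity by induction on $n$, bootstrapping from the base case $n=1$, which is just the commutator $[\Delta,\underline{x}]=E$.

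First I would verify the base case by a direct calculation on an arbitrary polynomial $p(x)$:
\[
\Delta\underline{x}(p)(x) = (x+1)p(x+1) - xp(x) = x\bigl(p(x+1)-p(x)\bigr) + p(x+1) = \underline{x}\Delta(p)(x) + E(p)(x),
\]
so $\Delta\underline{x} = \underline{x}\Delta + E$. Since $E\Delta^{0}=E$, this is exactly the asserted identity at $n=1$ (interpreting $\Delta^{0}=\Id$).

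For the inductive step, I would assume the identity at level $n$ and apply $\Delta$ on the left:
\[
\Delta^{n+1}\underline{x} = \Delta\bigl(\underline{x}\Delta^{n} + nE\Delta^{n-1}\bigr) = (\Delta\underline{x})\Delta^{n} + n(\Delta E)\Delta^{n-1}.
\]
Then I substitute the base-case identity $\Delta\underline{x}=\underline{x}\Delta+E$ in the first term and use shift invariance $\Delta E = E\Delta$ in the second, yielding
\[
\Delta^{n+1}\underline{x} = \underline{x}\Delta^{n+1} + E\Delta^{n} + nE\Delta^{n} = \underline{x}\Delta^{n+1} + (n+1)E\Delta^{n},
\]
completing the induction.

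The proof has essentially no obstacle: the only thing to watch is the mild bookkeeping at the boundary (making sure $\Delta^{-1}$ never appears, which is why anchoring the induction at $n=1$ rather than $n=0$ is cleanest). As an alternative presentation I would note the telescoping commutator identity
\[
[\Delta^{n},\underline{x}] = \sum_{k=0}^{n-1} \Delta^{k}[\Delta,\underline{x}]\Delta^{n-1-k} = \sum_{k=0}^{n-1} \Delta^{k} E \Delta^{n-1-k} = nE\Delta^{n-1},
\]
where the final equality invokes shift invariance $\Delta^{k}E = E\Delta^{k}$ to pull all the $E$'s to the left. Either route gives the statement in a line or two.
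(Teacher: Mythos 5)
Your proof is correct and follows essentially the same route as the paper: verify the commutator relation $\Delta\underline{x}=\underline{x}\Delta+E$ directly, then induct on $n$, exactly the induction the paper sketches. Your explicit inductive step (and the optional telescoping-commutator variant) simply fills in details the paper leaves to the reader.
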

\begin{proof}
When $n=0$ there is nothing to prove.
Begin by observing that
\[\Delta(x \cdot p(x))
= (x+1) \cdot p(x+1) - x \cdot p(x)
= x \cdot \bigl(p(x+1) - p(x)\bigr) + p(x+1)
= \underline{x} \Delta(p(x))
+ E(p(x)),\] and hence the identity holds
for $n=1$. The general case follows
by induction using the case $n=1$
as the induction step.
\end{proof}

Note that the relation
$\Delta \underline{x}  = \underline{x} \Delta + E$
is reminiscent of the Weyl relation
$\frac{d}{dx} \underline{x}  = \underline{x} \frac{d}{dx} + \Id$.

We now give the operator interpretation of the box polynomials.
\begin{theorem}
The box polynomial $B_{m,n}(x)$ satisfies
$B_{m,n}(x) = \Delta^n(x^{m+n})/n!$.
\label{theorem_operator_interpretation}
\end{theorem}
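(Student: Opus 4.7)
The plan is to show that $P_{m,n}(x) := \Delta^n(x^{m+n})/n!$ satisfies the same recursion and initial conditions as $B_{m,n}(x)$ given in Proposition~\ref{proposition_box_recursion}, and then conclude by induction on $m+n$. This approach is natural because Lemma~\ref{lemma_Delta_x} is precisely a commutation relation for the operators $\Delta$ and $\underline{x}$, and this is exactly what is needed to split $P_{m,n}(x)$ into the two pieces matching $x \cdot B_{m-1,n}(x)$ and $B_{m,n-1}(x+1)$.

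For the recursion step, I would write $x^{m+n} = \underline{x}(x^{m+n-1})$, apply Lemma~\ref{lemma_Delta_x} to obtain
\[
\Delta^n\bigl(\underline{x}(x^{m+n-1})\bigr)
= \underline{x}\Delta^n(x^{m+n-1}) + n\cdot E\Delta^{n-1}(x^{m+n-1}),
\]
and divide through by $n!$. The first summand becomes $x \cdot \Delta^n(x^{m+n-1})/n! = x \cdot P_{m-1,n}(x)$. In the second summand, the factor $n$ combines with $n!$ to leave $(n-1)!$ in the denominator, yielding $E\bigl(\Delta^{n-1}(x^{m+n-1})/(n-1)!\bigr) = E\bigl(P_{m,n-1}(x)\bigr) = P_{m,n-1}(x+1)$, using that $E$ and $\Delta$ commute. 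This reproduces the recursion from Proposition~\ref{proposition_box_recursion} exactly.

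For the base cases, $P_{m,0}(x) = x^m = B_{m,0}(x)$ is immediate, while $P_{0,n}(x) = \Delta^n(x^n)/n! = 1 = B_{0,n}(x)$ follows from the standard fact that iterating $\Delta$ exactly $n$ times on a monic polynomial of degree $n$ yields the constant $n!$. Induction on $m+n$ then completes the argument. I do not anticipate a serious obstacle here; the proof is essentially careful operator bookkeeping, and the only delicate point is applying Lemma~\ref{lemma_Delta_x} to $x^{m+n-1}$ rather than $x^{m+n}$, which is what makes the degrees and factorials line up with the index shifts in the recursion.
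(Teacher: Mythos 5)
Your proof is correct and is essentially the paper's own first argument: both rely on Lemma~\ref{lemma_Delta_x} together with the recursion and initial conditions of Proposition~\ref{proposition_box_recursion}, and proceed by induction; you merely run the manipulation in the opposite direction, showing $\Delta^n(x^{m+n})/n!$ satisfies the recursion rather than transforming $B_{m,n}(x)$ into that form. The operator bookkeeping, including applying the lemma to $x^{m+n-1}$, matches the paper's computation step for step.
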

\begin{proof}
The proof is by induction on $m$
and $n$.
The base case $m=0$ or $n=0$ is straightforward.
Using the recursion of Proposition~\ref{proposition_box_recursion}
and the induction hypothesis
we have that
\begin{align*}
B_{m,n}(x)
& =
x \cdot B_{m-1,n}(x) + B_{m,n-1}(x+1)
\\
& =
\underline{x}
(B_{m-1,n}(x))
+
E
(B_{m,n-1}(x)) \\
& =
1/n! \cdot 
\underline{x}
\Delta^{n}(x^{m-1+n})
+
1/(n-1)! \cdot 
E
\Delta^{n-1}(x^{m+n-1}) \\
& =
1/n!
\cdot
(\underline{x} \Delta^{n} +
n \cdot E \Delta^{n-1})
(x^{m+n-1}) \\
& =
1/n!
\cdot
\Delta^{n} \underline{x}
(x^{m+n-1}) ,
\end{align*}
where the last step is
Lemma~\ref{lemma_Delta_x},
completing the induction.
\end{proof}

A different and direct proof of
Theorem~\ref{theorem_operator_interpretation}
is as follows.
\begin{proof}[Second proof of
Theorem~\ref{theorem_operator_interpretation}.]
Using the relation
$\Delta \underline{x}
= \underline{x} \Delta + E$
each occurrence of $\Delta$
in $\Delta^{n} \underline{x}^{m+n}(1)$
can be moved to the right
until it either cancels
an $\underline{x}$
and the pair becomes
a shift operator $E$,
or it moves past each of the $m+n$ occurrences
of $\underline{x}$. 
Because $\Delta(1)$ is zero,
each $\Delta$ is forced to cancel with some
$\underline{x}$ to produce a shift operator $E$. 
Since the order of
the $n$ $\underline{x}$'s
that become
the shift operator $E$
does not matter, we divide by
$n!$ on both sides and obtain
that
\[
1/n! \cdot
\Delta^{n} \underline{x}^{m+n}(1)
=
\sum_{p_{0}+p_{1}+\cdots+p_{n}=m}
\underline{x}^{p_{0}}
E
\underline{x}^{p_{1}}
E
\cdots
E
\underline{x}^{p_{n}}
(1) .
\]
Let $\lambda$ be the partition
which has $p_{i}$ parts
equal to $i$.
Then the term
$\underline{x}^{p_{0}}
E
\underline{x}^{p_{1}}
E
\cdots
E
\underline{x}^{p_{n}}
(1)$
is indeed the product
$\prod_{j=1}^{m} (x+\lambda_{i})$
and 
the result follows by observing
that the
condition
$p_{0}+p_{1}+\cdots+p_{n}=m$
is equivalent to
the partition $\lambda$ satisfying  $\lambda\subseteq m\times n$.
\end{proof}

\begin{lemma}
The derivative of the box polynomial $B_{m,n}(x)$
satisfies
$$
\frac{d}{dx} B_{m,n}(x) = (m+n) \cdot B_{m-1,n}(x) .
$$
\label{lemma_D}
\end{lemma}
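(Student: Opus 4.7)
The plan is to use the operator interpretation from Theorem~\ref{theorem_operator_interpretation}, namely $B_{m,n}(x) = \Delta^{n}(x^{m+n})/n!$, and then exploit the fact that the differentiation operator $\frac{d}{dx}$ commutes with the difference operator $\Delta$.

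The key observation is that $\Delta = E - \Id$ is a linear combination of shift operators, and differentiation commutes with every shift operator $E^{a}$ since $\frac{d}{dx}(p(x+a)) = p'(x+a)$. Consequently $\frac{d}{dx}$ commutes with $\Delta$, and therefore with $\Delta^{n}$. Applying this to the operator expression for the box polynomial gives
\[
\frac{d}{dx} B_{m,n}(x)
= \frac{1}{n!} \frac{d}{dx} \Delta^{n}(x^{m+n})
= \frac{1}{n!} \Delta^{n}\!\left(\frac{d}{dx} x^{m+n}\right)
= \frac{m+n}{n!} \Delta^{n}(x^{m+n-1}) ,
\]
which equals $(m+n) \cdot B_{m-1,n}(x)$ by another application of Theorem~\ref{theorem_operator_interpretation}.

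There is essentially no obstacle here; the only thing to justify is the commutation $\frac{d}{dx} \Delta = \Delta \frac{d}{dx}$, which is immediate from the definition of $\Delta$ and the linearity of differentiation. No induction or case analysis is needed, making this a short corollary of the operator interpretation.
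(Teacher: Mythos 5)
Your proof is correct and follows essentially the same route as the paper: both invoke Theorem~\ref{theorem_operator_interpretation} and the commutation of $\frac{d}{dx}$ with $\Delta$ (which you justify slightly more explicitly via shift operators) to differentiate $x^{m+n}$ inside $\Delta^{n}$. No gaps; nothing further needed.
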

\begin{proof}
The derivative operator $\frac{d}{dx}$
commutes with the difference operator $\Delta$. Therefore,
\begin{align*}
\frac{d}{dx} B_{m,n}(x)
&=
\frac{d}{dx}\frac{1}{n!} \cdot \Delta^{n}(x^{m+n})
=
(m+n) \cdot \frac{1}{n!} \cdot \Delta^{n}(x^{(m-1)+n})
=
(m+n) \cdot B_{m-1,n}(x)
\qedhere.
\end{align*}
\end{proof}

Alternatively, Definition~\ref{definition_box}
of the box polynomials can be used
to prove Lemma~\ref{lemma_D}.
Let $\lambda$ be a partition contained in the $m \times n$ box.
Removing one arbitrary entry yields
a partition $\mu$ in a $(m-1) \times n$ box.
Let us write this relationship as $\lambda \sim \mu$.
Note that given $\mu \subseteq (m-1) \times n$
there are $m+n$ possible partitions $\lambda$ such
that 
$\lambda \sim \mu$,
as there are $m+n$ possible entries to insert in $\mu$.
Now apply the product rule to
Definition~\ref{definition_box} and change the order of summation:
\begin{align*}
\frac{d}{dx}
B_{m,n}(x)
& =
\sum_{\lambda \subseteq m \times n}
\frac{d}{dx}
\prod_{i=1}^{m}(x+\lambda_{i}) \\
& =
\sum_{\lambda \subseteq m \times n}
\sum_{\lambda \sim \mu}
\prod_{i=1}^{m-1}(x+\mu_{i}) \\
& =
\sum_{\mu \subseteq (m-1) \times n}
\sum_{\lambda \sim \mu}
\prod_{i=1}^{m-1}(x+\mu_{i}) \\
& =
(m+n) \cdot
B_{m-1,n}(x) .
\end{align*}

We now give another form for the box polynomials.
\begin{proposition}
The box polynomial $B_{m,n}(x)$ is given by the sum
\[B_{m,n}(x)
=
\sum_{j=0}^{m}
\binom{m+n}{j} \cdot S(m+n-j,n) \cdot
x^{j} . \]
\label{proposition_closed_form_stirling_binomial}
\end{proposition}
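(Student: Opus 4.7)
The plan is to recognize the right-hand side of the identity as the Taylor expansion of $B_{m,n}(x)$ at $x=0$. Since $B_{m,n}(x)$ is a polynomial of degree $m$, we may write
\[
B_{m,n}(x) = \sum_{j=0}^{m} \frac{B_{m,n}^{(j)}(0)}{j!} \cdot x^{j},
\]
so it suffices to compute $B_{m,n}^{(j)}(0)$.

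First I would iterate the derivative relation from Lemma~\ref{lemma_D}. Applying $\frac{d}{dx}$ once gives $(m+n) \cdot B_{m-1,n}(x)$; applying it $j$ times produces
\[
B_{m,n}^{(j)}(x) = (m+n)(m+n-1) \cdots (m+n-j+1) \cdot B_{m-j,n}(x) = \frac{(m+n)!}{(m+n-j)!} \cdot B_{m-j,n}(x).
\]
Second, I would evaluate at $x=0$ using equation~\eqref{equation_box_polynomial_at_zero}, which gives $B_{m-j,n}(0) = S(m+n-j, n)$. Dividing by $j!$ yields the desired coefficient
\[
\frac{B_{m,n}^{(j)}(0)}{j!} = \binom{m+n}{j} \cdot S(m+n-j, n),
\]
and the identity follows by inserting this into the Taylor expansion above.

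There is no real obstacle: the argument is a two-line computation once Lemma~\ref{lemma_D} and equation~\eqref{equation_box_polynomial_at_zero} are available. As a sanity check one could alternatively start from Theorem~\ref{theorem_operator_interpretation}, expand $\Delta^n = \sum_{i=0}^{n} \binom{n}{i} (-1)^{n-i} E^{i}$, apply the binomial theorem to each $(x+i)^{m+n}$, swap the order of summation, and identify the inner sum $\sum_{i=0}^{n} \binom{n}{i} (-1)^{n-i} i^{m+n-j}$ as $n! \cdot S(m+n-j, n)$ via the standard Stirling number formula; the terms with $j > m$ drop out automatically since $S(k,n) = 0$ for $k < n$. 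Either route produces the stated closed form.
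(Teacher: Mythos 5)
Your proof is correct and follows essentially the same route as the paper's first proof: iterate the derivative identity of Lemma~\ref{lemma_D} to get $B_{m,n}^{(j)}(x) = (m+n)_{(j)} \cdot B_{m-j,n}(x)$, evaluate at $x=0$ via equation~\eqref{equation_box_polynomial_at_zero}, and read off the Taylor coefficients. Your alternative sketch via $\Delta^{n} = \sum_{r}(-1)^{n-r}\binom{n}{r}E^{r}$ is likewise exactly the paper's second proof.
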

\begin{proof}
Since the box polynomial $B_{m,n}(x)$ has degree $m$
it is enough to determine the coefficient of~$x^{j}$
for $0 \leq j \leq m$:
\begin{align*}
[x^{j}] B_{m,n}(x)
& =
\frac{1}{j!}
\cdot
\left.
\frac{d^{j}}{dx^{j}}
B_{m,n}(x) \right|_{x = 0}
=
\left.
\frac{(m+n)_{(j)}}{j!} \cdot
B_{m-j,n}(x) \right|_{x = 0}
=
\binom{m+n}{j} \cdot S(m+n-j,n) .
\qedhere
\end{align*}
\end{proof}

\begin{proof}[Second proof of Proposition~\ref{proposition_closed_form_stirling_binomial}]
By the binomial theorem applied to $\Delta=E-\Id$,
the $n$th power of
the difference operator
$\Delta^{n}$
is given by
$\sum_{r=0}^{n}
(-1)^{n-r}
\cdot 
\binom{n}{r}
\cdot
E^{r}$.
Therefore, with Theorem~\ref{theorem_operator_interpretation} we
obtain
\begin{align}
B_{m,n}(x)
& =
\frac{1}{n!} \cdot
\sum_{r=0}^{n}
(-1)^{n-r} \cdot
\binom{n}{r} \cdot
(x+r)^{m+n} 
\label{equation_box_via_Delta} \\
& =
\frac{1}{n!} \cdot
\sum_{r=0}^{n}
(-1)^{n-r} \cdot
\binom{n}{r} \cdot
\sum_{j=0}^{m+n}
\binom{m+n}{j} \cdot
r^{m+n-j} \cdot
x^{j}
\nonumber \\
& =
\sum_{j=0}^{m+n}
\binom{m+n}{j} \cdot
\frac{1}{n!} \cdot
\sum_{r=0}^{n}
(-1)^{n-r} \cdot
\binom{n}{r} \cdot
r^{m+n-j} \cdot
x^{j}
\nonumber \\
& =
\sum_{j=0}^{m+n}
\binom{m+n}{j} \cdot
S(m+n-j,n)
\cdot
x^{j} ,
\nonumber
\end{align}
where the last step used
a classical identity for the Stirling numbers, see~\cite[equation~1.94(a)]{EC1}
or equation~\eqref{equation_chromatic_classic_stirling}.
Note that $S(m+n-j,n)$
is zero when $j>m$ and hence
the upper bound of the last sum
is actually~$m$.
\end{proof}

Using equation~\eqref{equation_box_polynomial_at_one}
and the same idea as the first proof
of Proposition~\ref{proposition_closed_form_stirling_binomial}, we obtain the following identity
$$
B_{m,n}(x)
=
\sum_{j=0}^{m}
\binom{m+n}{j} \cdot
S(m+n-j+1,n+1) \cdot
(x-1)^{j} .
$$

\begin{lemma}
For non-negative integers $m$, $n_{1}$ and $n_{2}$
we have the identity
$$  B_{m,n_{1} + n_{2} + 1}(x)
=
\sum_{k=0}^{m} 
B_{k,n_{1}}(x) \cdot B_{m-k,n_{2}}(x+n_{1}+1) .
$$
\label{lemma_split}
\end{lemma}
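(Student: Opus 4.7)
The plan is to derive the identity from the generating function~\eqref{equation_box_generating_function}. The left-hand side appears as the coefficient of $t^m$ in
\[
\sum_{m \geq 0} B_{m,n_{1}+n_{2}+1}(x)\, t^m
=
\frac{1}{\prod_{i=0}^{n_{1}+n_{2}+1} (1-(x+i)t)} .
\]
First I would factor the denominator as the product of $\prod_{i=0}^{n_{1}}(1-(x+i)t)$ and $\prod_{j=0}^{n_{2}}(1-((x+n_{1}+1)+j)t)$, which exploits the fact that the $n_{1}+n_{2}+2$ shifts $x, x+1, \ldots, x+n_{1}+n_{2}+1$ split cleanly into a block of size $n_{1}+1$ and a block of size $n_{2}+1$. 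The first factor is, by~\eqref{equation_box_generating_function}, the reciprocal of $\sum_{k \geq 0} B_{k,n_{1}}(x)\, t^k$, and the second factor is, by the same identity applied with $x$ replaced by $x+n_{1}+1$, the reciprocal of $\sum_{\ell \geq 0} B_{\ell,n_{2}}(x+n_{1}+1)\, t^\ell$. Extracting the coefficient of $t^m$ from the Cauchy product of these two series yields precisely the stated identity.

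Equivalently, one can argue via the complete symmetric function interpretation~\eqref{equation_box_stirling}. Write $B_{m,n_{1}+n_{2}+1}(x) = h_{m}(X \cup Y)$ where $X = \{x, x+1, \ldots, x+n_{1}\}$ and $Y = \{x+n_{1}+1, \ldots, x+n_{1}+n_{2}+1\}$, and apply the standard splitting identity $h_{m}(X \cup Y) = \sum_{k=0}^{m} h_{k}(X) \cdot h_{m-k}(Y)$. Recognizing $h_{k}(X) = B_{k,n_{1}}(x)$ and $h_{m-k}(Y) = B_{m-k,n_{2}}(x+n_{1}+1)$ finishes the argument.

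A bijective variant is also available: given $\lambda \subseteq m \times (n_{1}+n_{2}+1)$, let $k$ be the number of parts $\lambda_{i}$ with $\lambda_{i} \leq n_{1}$. Then the bottom $k$ parts form a partition $\mu \subseteq k \times n_{1}$, and the top $m-k$ parts, shifted down by $n_{1}+1$, form a partition $\nu \subseteq (m-k) \times n_{2}$. The product $\prod_{i=1}^{m}(x+\lambda_{i})$ factors accordingly as $\prod_{i=1}^{k}(x+\mu_{i}) \cdot \prod_{j=1}^{m-k}\bigl((x+n_{1}+1)+\nu_{j}\bigr)$, and summing over $\mu$, $\nu$ and then over $k$ recovers both sides.

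No serious obstacle arises here; each of the three approaches is essentially routine given tools already in place. The only care needed is in index bookkeeping when splitting the $n_{1}+n_{2}+2$ consecutive shifts into two blocks of sizes $n_{1}+1$ and $n_{2}+1$, which is why the statement features $n_{1}+n_{2}+1$ rather than $n_{1}+n_{2}$. I would present the generating function derivation as the main proof, since it is the most compact and uses an identity already recorded in the paper.
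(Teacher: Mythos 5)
Your proposal is correct, and in fact it contains the paper's argument as your third ``bijective variant'': the paper's proof is exactly the concatenation decomposition, writing $\lambda \subseteq m \times (n_{1}+n_{2}+1)$ uniquely as the parts exceeding $n_{1}$ (which, shifted down by $n_{1}+1$, give $\nu \subseteq (m-k)\times n_{2}$) stacked on the parts at most $n_{1}$ (giving $\mu \subseteq k \times n_{1}$), with the product $\prod_{i}(x+\lambda_{i})$ factoring accordingly. Your preferred route via the generating function~\eqref{equation_box_generating_function} is a genuinely different derivation: splitting the denominator $\prod_{i=0}^{n_{1}+n_{2}+1}\bigl(1-(x+i)t\bigr)$ into blocks of sizes $n_{1}+1$ and $n_{2}+1$ and taking the Cauchy product is clean and leans on an identity already recorded in the paper, and the equivalent formulation via $h_{m}(X \cup Y)=\sum_{k} h_{k}(X)\,h_{m-k}(Y)$ together with~\eqref{equation_box_stirling} is arguably the most conceptual, since it makes transparent why the shift $x+n_{1}+1$ and the ``$+1$'' in $n_{1}+n_{2}+1$ appear. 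What the paper's bijective proof buys is that it stays entirely at the level of Definition~\ref{definition_box}, requiring no series manipulations; what your generating-function and symmetric-function routes buy is brevity and an immediate generalization to splitting the interval of shifts into any number of consecutive blocks. All three arguments are sound; the only bookkeeping point, which you already flag, is that the two blocks of shifts have sizes $n_{1}+1$ and $n_{2}+1$, accounting for the index $n_{1}+n_{2}+1$ in the statement.
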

\begin{proof}
Any partition $\lambda \subseteq m \times (n_{1}+n_{2}+1)$
can be written uniquely as the concatenation of
the two partitions 
$\nu + n_{1} + 1$ and $\mu$,
where
$\nu \subseteq (m-k) \times n_{2}$,
$\mu \subseteq k \times n_{1}$,
and 
$\nu + n_{1} + 1$ shifts every entry of $\nu$ by $n_{1}+1$.
By summing over all possibilities the identity follows.
\end{proof}

\section{Connections with set partitions via chromatic polynomials}
\label{section_box_polynomial_chromatic}

Earlier we observed that
the box polynomial $B_{m,n}(x)$ evaluated at $x$
equal to $0$, $1$, $-n$ and $-n-1$
yields Stirling numbers of the second kind,
which enumerate set partitions.
In this section we consider other evaluations of the
box polynomial that also enumerate various flavors of set partitions.

For a graph $G$
let $S(G,k)$ be the number of set partitions of the vertex set of $G$
into $k$ blocks such that adjacent vertices of $G$ 
are in different blocks.
Another way to state this is that 
each block is an independent set of the graph $G$.

\begin{theorem}
Let $G$ be a graph on the vertex set $[n] = \{1,2, \ldots, n\}$.
Then the number of set partitions of $[n]$
into $k$ blocks such that adjacent vertices of $G$ 
are in different blocks
is given by
\[
S(G,k)
=
\frac{1}{k!}
\cdot
\Delta^{k}( \chi(G;x) )\rvert_{x=0},
\]
where $\chi(G;x)$ is the chromatic polynomial of the
graph $G$.
\label{theorem_chromatic}
\end{theorem}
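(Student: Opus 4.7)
The plan is to expand the chromatic polynomial in the falling factorial basis and then exploit the fact that the finite difference operator acts diagonally on this basis.

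First I would establish the key expansion
\[
\chi(G;x) = \sum_{j \geq 0} S(G,j) \cdot (x)_{j},
\]
where $(x)_{j} = x(x-1)\cdots(x-j+1)$ is the falling factorial. This is a standard combinatorial identity, but the argument is short enough to include: for a positive integer $x$, the left side counts proper colorings of $G$ using $x$ available colors, while the right side counts the same object by first choosing a set partition of $[n]$ into $j$ nonempty independent blocks (in $S(G,j)$ ways) and then injecting these $j$ blocks into the $x$ colors (in $(x)_{j}$ ways). Since both sides are polynomials in $x$ that agree on infinitely many values, the identity holds.

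Next I would show that the finite difference operator acts on the falling factorial basis by the discrete analogue of differentiation, namely $\Delta((x)_{j}) = j \cdot (x)_{j-1}$. Iterating gives
\[
\Delta^{k}((x)_{j}) = (j)_{k} \cdot (x)_{j-k},
\]
and evaluating at $x=0$ picks out $j=k$: indeed $(0)_{j-k}$ vanishes for $j \neq k$ and equals $1$ when $j=k$, so
\[
\Delta^{k}((x)_{j})\big|_{x=0} = k! \cdot \delta_{j,k}.
\]

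Finally I would apply $\Delta^{k}$ termwise to the expansion of $\chi(G;x)$, evaluate at $x=0$, and use the orthogonality above to get
\[
\Delta^{k}(\chi(G;x))\big|_{x=0} = \sum_{j \geq 0} S(G,j) \cdot k! \cdot \delta_{j,k} = k! \cdot S(G,k),
\]
which rearranges to the claimed formula. No step here is truly an obstacle; the main thing to take care with is recording the independent-blocks-plus-coloring bijection cleanly, since this is the content from which everything else follows mechanically.
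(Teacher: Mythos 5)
Your proof is correct, and it takes a somewhat different route from the paper's. The paper works directly with the inclusion--exclusion expansion of $\Delta^{k}$: it interprets $\chi(G;x)$ as counting ordered set partitions of $[n]$ into $x$ possibly empty independent blocks, shows that $\sum_{i=0}^{k}(-1)^{k-i}\binom{k}{i}\chi(G;i)$ counts ordered partitions into $k$ nonempty independent blocks by sieving out the empty ones, and then divides by $k!$. You instead factor the argument through the classical expansion $\chi(G;x)=\sum_{j\geq 0} S(G,j)\cdot x_{(j)}$ in the falling factorial basis (proved by the same coloring-equals-partition-plus-injection bijection that underlies the paper's count) and then use the fact that $\Delta^{k}$ evaluated at $x=0$ acts as $k!$ times coefficient extraction on that basis, since $\Delta(x_{(j)})=j\cdot x_{(j-1)}$ and $0_{(j-k)}$ vanishes unless $j=k$. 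The two proofs rest on the same combinatorial content but package it differently: the paper's version is self-contained and makes the inclusion--exclusion explicit, while yours isolates a reusable lemma (the falling-factorial expansion of $\chi$) and reduces the rest to the diagonal action of $\Delta$, which arguably makes the mechanism more transparent and connects directly to the standard Stirling-number formalism. All the computational steps you cite ($\Delta^{k}(x_{(j)})\rvert_{x=0}=k!\,\delta_{j,k}$, and the polynomial identity argument for the expansion) check out.
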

\begin{proof}
Consider a legal coloring of the graph $G$, that is, a function
$f : [n] \longrightarrow [x]$ such that
for an edge $ij$ we have $f(i) \neq f(j)$.
By defining the blocks $C_{r} = \{i \in [n] \: : \: f(i) = r\}$,
we obtain an ordered set partition
$(C_{1}, C_{2}, \ldots, C_{x})$,
where the blocks are independent sets and possible empty.
Hence the chromatic polynomial $\chi(G;x)$ enumerates these
ordered set partitions of $[n]$ into $x$ possibly empty blocks.
By inclusion-exclusion, the number of ordered sets
partitions into $k$ blocks where the blocks are non-empty independent sets
is given by the alternating sum
$\sum_{i=0}^{k}   (-1)^{k-i} \cdot \binom{k}{i} \cdot \chi(G;i)$,
since $\binom{k}{i}\cdot\chi(G;i)$ counts set partitions 
of $[n]$ into $k$ parts with blocks forming independent
sets of $G$ with at least $k-i$ empty blocks.

The result follows by removing the order between the blocks,
that is, dividing by $k!$. Finally, express the result in terms of
the forward difference operator $\Delta$ applied $k$ times.
\end{proof}

Note that the empty graph on $n$ vertices, that is the graph with no 
edges, has chromatic polynomial~$x^{n}$. Since any subset
of vertices of the empty graph is an independent
 set, 
Theorem~\ref{theorem_chromatic}
yields the classical formula
\begin{equation}
S(n,k)
=
\frac{1}{k!} \cdot \Delta^{k}(x^{n})\rvert_{x=0}
=
\frac{1}{k!} \cdot \sum_{i=0}^{k} (-1)^{k-i} \cdot \binom{k}{i} \cdot i^{\,n}.
\label{equation_chromatic_classic_stirling}
\end{equation}

Furthermore, since the chromatic polynomial of a cycle of length $n \geq 3$ is given
by $\chi(C_{n};x) = (x-1)^{n} + (-1)^{n} \cdot (x-1)$, we have the following
consequence.
\begin{proposition}
The number of set partitions of $[n]$
into $k \geq 2$ blocks
such that the elements $i$ and $i+1$ are in different blocks, including $1$ and $n$,  
is given by
the box polynomial $B_{n-k,k}(x)$ evaluated at $x=-1$.
\label{proposition_chromatic_acyclic}
\end{proposition}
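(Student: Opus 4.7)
The plan is to apply Theorem~\ref{theorem_chromatic} to the cycle graph $C_{n}$, since the condition that consecutive elements $i,i+1$ (and also $1,n$) lie in different blocks is exactly the condition that the blocks are independent sets of $C_{n}$. Thus the quantity of interest equals $S(C_{n},k)$, and Theorem~\ref{theorem_chromatic} gives
\[
S(C_{n},k) = \frac{1}{k!} \cdot \Delta^{k}\bigl( \chi(C_{n};x) \bigr) \Big|_{x=0}.
\]

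Next I would plug in the well-known chromatic polynomial $\chi(C_{n};x) = (x-1)^{n} + (-1)^{n} \cdot (x-1)$ and split the computation by linearity. The linear term $(-1)^{n} \cdot (x-1)$ is annihilated by $\Delta^{k}$ whenever $k \geq 2$, since $\Delta$ lowers polynomial degree by one. So only the contribution from $(x-1)^{n}$ survives.

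For the surviving piece, I would use shift invariance $\Delta \, E^{a} = E^{a} \, \Delta$ together with the identity $E^{-1}(x^{n}) = (x-1)^{n}$ to rewrite
\[
\Delta^{k}\bigl( (x-1)^{n} \bigr) \Big|_{x=0} = E^{-1} \Delta^{k}(x^{n}) \Big|_{x=0} = \Delta^{k}(x^{n}) \Big|_{x=-1}.
\]
Dividing by $k!$ and appealing to the operator description in Theorem~\ref{theorem_operator_interpretation} with $m=n-k$ (so that $m+k = n$) identifies this with $B_{n-k,k}(-1)$, which is what we want.

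There is no substantive obstacle; the argument is a direct assembly of the two theorems already available, with the only care being the separate treatment of the linear term and the bookkeeping translating $(x-1)^{n}$ at $x=0$ into $x^{n}$ at $x=-1$ via the shift operator. The hypothesis $k \geq 2$ is used exactly to kill the $(-1)^{n}(x-1)$ summand.
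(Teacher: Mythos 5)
Your argument is correct and coincides with the paper's first proof of this proposition: both apply Theorem~\ref{theorem_chromatic} to the cycle $C_{n}$, observe that $\Delta^{k}$ kills the term $(-1)^{n}(x-1)$ for $k \geq 2$, shift $(x-1)^{n}$ at $x=0$ to $x^{n}$ at $x=-1$, and invoke Theorem~\ref{theorem_operator_interpretation}. Your treatment is just slightly more explicit about the shift-invariance bookkeeping; no differences of substance.
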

\begin{proof}
By Theorem~\ref{theorem_chromatic}
the sought after enumeration is given by:
\begin{align*}
\frac{1}{k!}
\cdot
\Delta^{k}\left(
(x-1)^{n} + (-1)^{n} \cdot (x-1)
\right)
\rvert_{x=0}
& =
\frac{1}{k!}
\cdot
\Delta^{k}\left( (x-1)^{n} \right) \rvert_{x=0}
=
\frac{1}{k!}
\cdot
\Delta^{k}\left( x^{n} \right) \rvert_{x=-1} ,
\end{align*}
which is the box polynomial $B_{n-k,k}(x)$
evaluated at $x=-1$ by Theorem~\ref{theorem_operator_interpretation}.
\end{proof}

A different inclusion-exclusion proof can be given for 
Proposition~\ref{proposition_chromatic_acyclic}.

\begin{proof}[Second proof of Proposition~\ref{proposition_chromatic_acyclic}]
Consider the set $[n]$ as the
congruence classes modulo $n$,
that is,~$\mathbb{Z}_{n}$.
In other words, the element $n$ is followed by $1$.
Let $A$ be a subset of $\mathbb{Z}_{n}$.
Then the number of set partitions $\pi$ of $[n]$ into $k$ blocks such that if $i$ belongs to $A$
then $i$ and $i+1$ belong to the same block of $\pi$ is given by
$S(n-|A|,k)$, since we can first choose a set partition
of $\mathbb{Z}_{n} - A$ and then insert $i \in A$ into the same block
as $i+1$.
Hence by inclusion-exclusion the desired number of set partitions
is given by
\begin{equation}
\sum_{A \subseteq \mathbb{Z}_{n}} (-1)^{|A|} \cdot S(n-|A|,k)
=
\sum_{j=0}^{n} (-1)^{j} \cdot \binom{n}{j} \cdot S(n-j,k) .
\label{equation_box_minus_one}
\end{equation}
Observe that when the variable $j$ exceeds $n-k$,
the associated term vanishes.
Now the result follows by
Proposition~\ref{proposition_closed_form_stirling_binomial}.
\end{proof}

We continue to apply Theorem~\ref{theorem_chromatic} to more
families of graphs. An {\em $s$-tree} is defined recursively as follows.
The complete graph $K_{s}$ is an $s$-tree.
Given an $s$-tree $T$ with a clique of size $s$,
then we can adjoin a new vertex only connected to all the
$s$ vertices in the clique, to obtain a new $s$-tree.
For instance, a $1$-tree is the classical notion of a tree.
Directly, we know that the chromatic polynomial of an $s$-tree
on $n$ vertices is given by
$x_{(s)} \cdot (x-s)^{n-s}$,
where $x_{(s)}$ denotes the lower factorial
$x \cdot (x-1) \cdots (x-s+1)$.
We now reproduce a result of Yang~\cite{yang}:

\begin{proposition}
Let $T$ be an $s$-tree on the vertex set $[n]$
and $k$ a positive integer such 
that $s \leq k \leq n$.  
The number of set partitions $\pi$ of $[n]$ into 
$k$ blocks such that if $i$ and $j$ 
are in the same block of $\pi$
then $i$ and $j$ are not adjacent in $T$
is given by the box polynomial
$B_{n-k,k-s}(x)$ evaluated at~$0$,
that is, the Stirling number
$S(n-s,k-s)$.
\label{proposition_s-tree}
\end{proposition}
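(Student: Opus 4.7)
The plan is to apply Theorem~\ref{theorem_chromatic} to the chromatic polynomial of an $s$-tree, reduce the resulting alternating sum via the classical formula~\eqref{equation_chromatic_classic_stirling}, and identify the answer with $B_{n-k,k-s}(0)$ through equation~\eqref{equation_box_polynomial_at_zero}.

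The first step is to use the chromatic polynomial formula $\chi(T;x) = x_{(s)} \cdot (x-s)^{n-s}$ stated in the paragraph preceding the proposition; this itself follows by induction on $n$, since adjoining a new vertex to an existing $s$-clique contributes a factor of $x-s$, with the base case $\chi(K_{s};x) = x_{(s)}$. By Theorem~\ref{theorem_chromatic},
\[
S(T,k) = \frac{1}{k!} \cdot \Delta^{k}\bigl(x_{(s)} \cdot (x-s)^{n-s}\bigr)\rvert_{x=0},
\]
and expanding $\Delta^{k}$ by the binomial theorem (as in~\eqref{equation_box_via_Delta}) yields
\[
S(T,k) = \frac{1}{k!} \sum_{r=0}^{k} (-1)^{k-r} \binom{k}{r} \cdot r_{(s)} \cdot (r-s)^{n-s}.
\]

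The main algebraic step is then to observe that $r_{(s)} = 0$ for $r < s$, so the sum is supported on $r \geq s$, and to apply the factorial identity $\binom{k}{s+j} \cdot (s+j)_{(s)} = \frac{k!}{(k-s)!} \cdot \binom{k-s}{j}$ after the substitution $r = s+j$. This collapses the expression to
\[
S(T,k) = \frac{1}{(k-s)!} \sum_{j=0}^{k-s} (-1)^{k-s-j} \cdot \binom{k-s}{j} \cdot j^{\,n-s} = S(n-s,\,k-s),
\]
by~\eqref{equation_chromatic_classic_stirling}. Finally, equation~\eqref{equation_box_polynomial_at_zero} applied with $(m,n) \mapsto (n-k,\,k-s)$ gives $B_{n-k,k-s}(0) = S(n-s,k-s)$, as desired.

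I do not anticipate any serious obstacle; once the chromatic polynomial of an $s$-tree is in hand, the proof reduces to routine bookkeeping with binomial coefficients, closely paralleling the second proof of Proposition~\ref{proposition_chromatic_acyclic}. A conceptual alternative would be a bijective or inductive argument: by removing a simplicial (leaf) vertex of the $s$-tree, one shows that the count $f(n,k)$ of good partitions satisfies $f(n,k) = (k-s) \cdot f(n-1,k) + f(n-1,k-1)$, which is exactly the Stirling recursion for $S(n-s,k-s)$ after shifting indices. The direct operator computation above is shorter and fits the style of the surrounding results.
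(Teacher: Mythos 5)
Your proof is correct, but the computational core differs from the paper's. Both arguments begin identically: apply Theorem~\ref{theorem_chromatic} to $\chi(T;x)=x_{(s)}\cdot(x-s)^{n-s}$ and evaluate $\tfrac{1}{k!}\Delta^{k}(\chi(T;x))\rvert_{x=0}$. The paper then stays at the operator level: it uses Lemma~\ref{lemma_Delta_x} in the form $\Delta^{k}\underline{x}(p(x))\rvert_{x=0}=k\cdot\Delta^{k-1}E(p(x))\rvert_{x=0}$ (equation~\eqref{equation_delta_underline_x}) and applies it $s$ times, peeling off one factor of the falling factorial per step, which converts the expression into $\tfrac{1}{(k-s)!}\Delta^{k-s}(x^{n-s})\rvert_{x=0}=B_{n-k,k-s}(0)$ with no explicit summation. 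You instead expand $\Delta^{k}$ as the alternating binomial sum, kill the terms with $r<s$ via $r_{(s)}=0$, and collapse the rest with the identity $\binom{k}{s+j}(s+j)_{(s)}=\tfrac{k!}{(k-s)!}\binom{k-s}{j}$, landing on the classical formula~\eqref{equation_chromatic_classic_stirling} for $S(n-s,k-s)$; the identity is correct and the bookkeeping checks out (including the boundary case $n=s$ with the convention $0^{0}=1$). The paper's route is slicker and reuses machinery that also drives Proposition~\ref{proposition_1_r_different_blocks_chromatic}, while yours is elementary and self-contained. Your closing sketch of the inductive alternative is also sound: deleting a simplicial vertex, whose $s$ clique neighbors occupy $s$ distinct blocks, gives $f(n,k)=(k-s)\cdot f(n-1,k)+f(n-1,k-1)$, which is the Stirling recursion after the index shift; this bijective flavor is something the paper does not provide.
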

\begin{proof}
By Lemma~\ref{lemma_Delta_x}
we have that
\begin{equation}
\Delta^{k}\underline{x} (p(x)) \rvert_{x=0}
=
k \cdot \Delta^{k-1}E (p(x)) \rvert_{x=0} .
\label{equation_delta_underline_x}
\end{equation}
Applying equation~\eqref{equation_delta_underline_x} to
the polynomial
$(x-1)_{(s-1)} \cdot (x-s)^{n-s}$ we have
\begin{align}
\frac{1}{k!} \cdot
\Delta^{k}
(x_{(s)} \cdot (x-s)^{n-s})
\rvert_{x=0}
& =
\frac{1}{(k-1)!} \cdot
\Delta^{k-1} E
((x-1)_{(s-1)} \cdot (x-s)^{n-s})
\rvert_{x=0}
\nonumber \\
& =
\frac{1}{(k-1)!} \cdot
\Delta^{k-1}
(x_{(s-1)} \cdot (x-s+1)^{n-s})
\rvert_{x=0}.
\label{equation_s}
\end{align}
By applying equation~\eqref{equation_s} $s-1$ more times, for a total of $s$ applications,
we obtain:
\begin{equation*}
\label{equation_chromatic_at_least_s_away}
\frac{1}{k!} \cdot \Delta^k(\chi(G;x))\rvert_{x=0}
=
\frac{1}{(k-s)!} \cdot \Delta^{k-s}(x^{n-s})\rvert_{x=0}
=
B_{n-k,k-s}(0) ,
\end{equation*}
which is the Stirling number $S(n-s,k-s)$.
\end{proof}

Furthermore by summing the number of blocks in
Proposition~\ref{proposition_s-tree}
between $s$ and $n$ we obtain:
\begin{corollary}
Let $T$ be an $s$-tree on the vertex set $[n]$. 
The number of set partitions $\pi$ of $[n]$
such that if $i$ and $j$ 
are in the same block of $\pi$
then $i$ and $j$ are not adjacent in $T$
is given by the Bell number $B(n-s)$.
\end{corollary}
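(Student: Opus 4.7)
The plan is to obtain the corollary as an immediate consequence of Proposition~\ref{proposition_s-tree} by summing over the number of blocks. First I would note that any valid set partition of $[n]$ must use at least $s$ blocks, because the $s$-tree $T$ contains a copy of the complete graph $K_{s}$ (in fact the initial one in the recursive construction), and the $s$ vertices of this clique must pairwise lie in distinct blocks. Hence the total count decomposes as
\[
\#\{\text{valid } \pi\}
=
\sum_{k=s}^{n} \#\{\text{valid } \pi \text{ with exactly } k \text{ blocks}\}.
\]

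Next I would invoke Proposition~\ref{proposition_s-tree} to rewrite each summand: the number of valid set partitions with exactly $k$ blocks equals the Stirling number $S(n-s,k-s)$. Substituting and reindexing by $j = k-s$ gives
\[
\#\{\text{valid } \pi\}
=
\sum_{k=s}^{n} S(n-s,k-s)
=
\sum_{j=0}^{n-s} S(n-s,j).
\]
Finally I would recognize the right-hand side as the standard definition of the Bell number $B(n-s)$, which counts all set partitions of an $(n-s)$-element set by summing the Stirling numbers of the second kind over all possible block counts.

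There is essentially no main obstacle here — the argument is a one-line summation once Proposition~\ref{proposition_s-tree} is in hand. The only point requiring a sentence of justification is the lower index $k \geq s$, which follows from the presence of $K_{s}$ as a subgraph of $T$; for $k < s$ there are no valid partitions, so extending the sum down to $k=0$ (or equivalently $j$ down to $-s$) contributes nothing and the reindexed sum cleanly matches the Bell number.
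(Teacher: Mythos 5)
Your proposal matches the paper's argument exactly: the paper obtains the corollary by summing the count $S(n-s,k-s)$ from Proposition~\ref{proposition_s-tree} over the number of blocks $k$ from $s$ to $n$, which is precisely your reindexed sum $\sum_{j=0}^{n-s} S(n-s,j) = B(n-s)$. Your added remark that no valid partition has fewer than $s$ blocks (because of the $K_s$ clique) is a correct and harmless bit of extra justification for the lower summation limit.
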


The following result is originally due to to Prodinger~\cite{prodinger}.
See also Chen--Deng--Du for a bijective proof~\cite{chen_non_crossing}.
Their bijective proof can be described by
using the bijection $\varphi_{n}$ between set partitions
and rook placements on a triangular board;
see the description after Corollary~2.4.2
in~\cite{EC1}.
First apply this bijection $\varphi_{n}$
to obtain a rook placement on triangular board,
where the condition implies that there
is no rook on the $s$ longest diagonals,
then remove these $s$ diagonals and apply the
inverse map~$\varphi_{n-s}^{-1}$ to obtain
a set partition on the set $[n-s]$.
\begin{corollary}
Let $k$ and $s$ be positive integers such 
that $k\geq s$.  
The number of set partitions $\pi$ of $[n]$ into 
$k$ blocks such that if $i$ and $j$ 
are in the same block of $\pi$
then $|i-j|>s$ is given by the Stirling number $S(n-k,k-s)$.
\label{corollary_at_least_s_apart}
\end{corollary}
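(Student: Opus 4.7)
The plan is to deduce the corollary directly from Proposition~\ref{proposition_s-tree} by identifying the correct $s$-tree on $[n]$. Define the graph $T$ on the vertex set $[n]$ by declaring $i \sim j$ precisely when $1 \leq |i - j| \leq s$. With this choice of $T$, the requirement ``if $i$ and $j$ lie in a common block of $\pi$ then $|i - j| > s$'' is exactly the statement that every block of $\pi$ is an independent set of $T$, which is the hypothesis of Proposition~\ref{proposition_s-tree}.

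The only real step is to verify that $T$ is indeed an $s$-tree. First, I would observe that the induced subgraph on $\{1, 2, \ldots, s\}$ is a complete graph $K_{s}$, because any two of these vertices differ by at most $s - 1$. Then I would adjoin the vertices $s+1, s+2, \ldots, n$ one at a time in this order; when vertex $v$ is attached, its neighbors within the already-built subgraph on $\{1, 2, \ldots, v-1\}$ are exactly $\{v - s, v - s + 1, \ldots, v - 1\}$, and those $s$ vertices already form a clique in the subgraph (their pairwise differences are bounded by $s - 1$). Hence $T$ arises from the recursive $s$-tree construction.

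Applying Proposition~\ref{proposition_s-tree} to this $T$ then yields that the count in question equals $B_{n-k, k-s}(0) = S(n-s, k-s)$, the final equality being equation~\eqref{equation_box_polynomial_at_zero}. I do not expect any genuine obstacle beyond the verification that $T$ is an $s$-tree. The only point requiring mild care is the first adjunction $v = s + 1$, where one must observe that the neighborhood of $v$ in the already-constructed subgraph is precisely the initial clique $\{1, 2, \ldots, s\}$, so the $s$-tree recursion applies from the very first step.
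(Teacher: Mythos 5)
Your proposal is correct and is essentially the paper's own proof: the paper's argument is the single observation that the graph on $[n]$ with $i$ adjacent to $j$ whenever $|i-j|\leq s$ is an $s$-tree, after which Proposition~\ref{proposition_s-tree} gives the count; your explicit verification of the $s$-tree recursion (the clique on $\{1,\ldots,s\}$ plus adjoining $s+1,\ldots,n$ in order) just fills in the detail the paper leaves to the reader. Note that your conclusion $S(n-s,k-s)=B_{n-k,k-s}(0)$ is the correct one, consistent with Proposition~\ref{proposition_s-tree} and with the classical $s=1$ count $S(n-1,k-1)$; the Stirling number $S(n-k,k-s)$ printed in the statement of Corollary~\ref{corollary_at_least_s_apart} is a typo for $S(n-s,k-s)$.
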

\begin{proof}
It is enough to observe that
the graph $G$ on the vertex set $[n]$ such that 
$i$ and $j$ are adjacent if $|i-j|\leq s$
is an $s$-tree.
\end{proof}

\begin{proposition}
Let $r$ be a positive integer.
The box polynomial~$B_{m,n}(x)$ evaluated at $x=r$ enumerates
set partitions of $m+n+r$ elements into $n+r$ blocks
such that the elements $1,2,\ldots,r$ are all in different blocks.
\label{proposition_1_r_different_blocks_chromatic}
\end{proposition}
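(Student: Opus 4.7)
The plan is to apply Theorem~\ref{theorem_chromatic} to the right graph. Let $G$ be the graph on the vertex set $[m+n+r]$ whose edge set is exactly the edge set of the complete graph on $\{1, 2, \ldots, r\}$, with the remaining vertices $r+1, \ldots, m+n+r$ being isolated. A set partition of $[m+n+r]$ has every block an independent set of $G$ if and only if no block contains two of $1, \ldots, r$, that is, if and only if $1, \ldots, r$ lie in pairwise distinct blocks. Since $G$ is the disjoint union of $K_{r}$ and $m+n$ isolated vertices, its chromatic polynomial is $\chi(G;x) = x_{(r)} \cdot x^{m+n}$, and Theorem~\ref{theorem_chromatic} expresses the desired count as $\frac{1}{(n+r)!} \cdot \Delta^{n+r}(x_{(r)} \cdot x^{m+n})\rvert_{x=0}$. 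It then remains to show that this equals $B_{m,n}(r)$.

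For the reduction I would mimic the iterative argument in the proof of Proposition~\ref{proposition_s-tree}. Writing $x_{(r)} \cdot x^{m+n} = \underline{x} \cdot ((x-1)_{(r-1)} \cdot x^{m+n})$ and invoking Lemma~\ref{lemma_Delta_x}, the term $\underline{x}\Delta^{n+r}$ vanishes at $x=0$, leaving $(n+r) \cdot E \Delta^{n+r-1}$ acting on $(x-1)_{(r-1)} \cdot x^{m+n}$ and evaluated at $x=0$. The shift $E$ turns this into an evaluation at $x=1$, and the change of variable $y = x-1$ recasts the quantity as $\frac{1}{(n+r-1)!} \cdot \Delta^{n+r-1}(y_{(r-1)} \cdot (y+1)^{m+n})\rvert_{y=0}$. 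Iterating this peeling step a total of $r$ times strips off all factors of the falling factorial while incrementing the base of the monomial by one each round, leaving $\frac{1}{n!} \cdot \Delta^{n}((x+r)^{m+n})\rvert_{x=0} = \frac{1}{n!} \cdot \Delta^{n}(x^{m+n})\rvert_{x=r} = B_{m,n}(r)$, where the last equality is Theorem~\ref{theorem_operator_interpretation}.

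The main obstacle is just the careful bookkeeping over the $r$ iterations; no new conceptual ingredient is required beyond the pattern already present in Proposition~\ref{proposition_s-tree}. As an alternative, I could give a direct combinatorial count avoiding the chromatic polynomial: for each $k$, there are $\binom{m+n}{k} \cdot r^{k} \cdot S(m+n-k,n)$ set partitions of the desired form having exactly $k$ of the elements $r+1, \ldots, m+n+r$ placed into the blocks distinguished by $1, \ldots, r$ (each such element choosing independently one of those $r$ blocks) with the remaining $m+n-k$ elements forming the $n$ further non-empty blocks. Summing over $k$ and using that $S(m+n-k,n)$ vanishes for $k > m$ gives $\sum_{j=0}^{m} \binom{m+n}{j} \cdot S(m+n-j,n) \cdot r^{j}$, which equals $B_{m,n}(r)$ by Proposition~\ref{proposition_closed_form_stirling_binomial}.
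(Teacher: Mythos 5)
Your main argument is essentially the paper's own proof: the same graph ($K_{r}$ together with $m+n$ isolated vertices), the same chromatic polynomial $x_{(r)} \cdot x^{m+n}$, and the same $r$-fold peeling via Lemma~\ref{lemma_Delta_x} (the paper's equation~\eqref{equation_delta_underline_x}) followed by Theorem~\ref{theorem_operator_interpretation}. Your closing alternative, the direct count $\sum_{j}\binom{m+n}{j}\cdot r^{j}\cdot S(m+n-j,n)$ matched against Proposition~\ref{proposition_closed_form_stirling_binomial}, is also correct and is a pleasant purely combinatorial shortcut, but the proof you present as primary coincides with the paper's.
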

\begin{proof}
Apply Theorem~\ref{theorem_chromatic}
to the graph $G$ given by the disjoint union of
the complete graph $K_{r}$ and $m+n$ isolated
vertices. The chromatic polynomial of $G$
is
$\chi(G;x) = x_{(r)} \cdot x^{m+n}$.
By applying equation~\eqref{equation_delta_underline_x}
$r$ times to the expression
$\frac{1}{(n+r)!} \cdot \Delta^{n+r}(\chi(G;x))\rvert_{x=0}$
we are left with:
\begin{align*}
\frac{1}{(n+r)!} \cdot \Delta^{n+r}(\chi(G;x))\rvert_{x=0}
& =
\frac{1}{n!} \cdot \Delta^{n}((x+r)^{m+n})\rvert_{x=0}
=
\frac{1}{n!} \cdot \Delta^{n}(x^{m+n})\rvert_{x=r}
=
B_{m,n}(r) .
\qedhere
\end{align*}
\end{proof}

\section{Connection with set partitions via generating functions}
\label{section_box_polynomial_generating_functions}

We now turn our attention to generating functions and their connection to
enumeration of set partitions.
Our tool are restricted growth words.

A {\em restricted growth word}, or $\RG$-word for short,
is a word $v = v_{1} v_{2} \cdots v_{n}$ whose letters
are positive integers such that
$v_{i} \leq \max(0, v_{1}, v_{2}, \ldots, v_{i-1}) + 1$.
There is a natural bijection between set partitions
of the set $[n]$ into $k$ blocks and $\RG$-words of length $n$
such that the largest letter is~$k$. Namely, if $v_{i} = v_{j}$
place $i$ and $j$ in the same block.
The inverse of this bijection is given by ordering the blocks
of the partition $\pi = \{B_{1}, B_{2}, \ldots, B_{k}\}$
such that $\min(B_{1}) < \min(B_{1}) < \cdots < \min(B_{k})$
and then letting $v_{i} = r$ if $i$ belongs to the $r$th block $B_{r}$.
Furthermore, a partition with the blocks ordered according to their
smallest elements is said to be in {\em standard form}.

Note that every $\RG$-word $v$ has a unique factorization
as $v = 1 \cdot u_{1} \cdot 2 \cdot u_{2} \cdot 3 \cdots k \cdot u_{k}$,
where the word~$u_{i}$ only has letters from the interval $[i]$.
Since the length generating function for words with letters from
an alphabet of size $i$ is $1/(1 - i \cdot t)$ we directly obtain
the generating function
for the Stirling numbers of the second kind
\begin{align}
\sum_{n \geq k} S(n,k) \cdot t^{n-k}
=
\frac{1}{1-t}
\cdot
\frac{1}{1 - 2 \cdot t}
\cdots
\frac{1}{1 - k \cdot t} .
\label{equation_restriction_idea}
\end{align}
However, if we have restrictions on the word $u_{i}$
this will yield a different $i$th factor in the
product in equation~\eqref{equation_restriction_idea}.
This methodology is used three times in this section.

We now give yet another proof of 
Proposition~\ref{proposition_chromatic_acyclic} using 
restricted growth words.

\begin{proof}[Third proof of Proposition~\ref{proposition_chromatic_acyclic}]
The associated $\RG$-word for such a set partition factors as
$1 \cdot u_{1} \cdot 2 \cdot u_{2} \cdot 3 \cdots k \cdot u_{k}$
where the word $u_{i}$ does not begin with letter $i$
for $1 \leq i \leq k$
and the word $u_{k}$ does not end
in the letter $1$.
These condition imply that
$u_{1}$ is the empty word
and hence its associated length generating function is $1$.
For $2 \leq i \leq k-1$
the word $u_{i}$ is either empty
or it begins
with a letter $1$ through $i-1$. 
For each of the subsequent letters
of $u_i$, we need the next letter different from
the previous letter, yielding $i-1$ choices for letter.
Therefore
the length generating function for $u_{i}$,
where $i \leq k-1$,
is
\begin{align}
1 + (i-1)t \cdot \frac{1}{1 - (i-1) \cdot t} = \frac{1}{1 - (i-1) \cdot t} .
\label{equation_u_i}
\end{align}

The argument for the length generating function for the last
word $u_{k}$ is more delicate since we have restrictions on both
the first and last letter.
Let $M$ be the $k \times k$ matrix where all the entries are $1$
but the diagonal entries which are $0$.
Observe that the $(i,j)$ entry of $M^{n-1}$ enumerates the 
number of words of length $n$ such that the first letter is $i$, the last letter is $j$
and each pair of adjacent letters are different.
Let $\vec{a}$ be the vector $(0, 1, 1, \ldots, 1)$
and
$\vec{b}$ be the vector $(1, 1, \ldots, 1, 0)^{T}$.
Then the number of words $u_{k}$ of length $n \geq 1$
is given by the product
$\vec{a} \cdot M^{n-1} \cdot \vec{b}$.
Observe that the vector
$\vec{v}^{\,T} = (1,1, \ldots, 1)$
is an eigenvector of $M$ with eigenvalue $k-1$.
Similarly,
$\vec{w}^{\,T} = (-k+1, 1, 1, \ldots, 1)$
is an eigenvector with eigenvalue $-1$.
Since
$\vec{b} = (1-1/k) \cdot \vec{v} + 1/k \cdot \vec{w}$
we have
\begin{align*}
\vec{a} \cdot M^{n-1} \cdot \vec{b}
& =
\vec{a} \cdot M^{n-1} \cdot ((1-1/k) \cdot \vec{v} + 1/k \cdot \vec{w}) \\
& =
\vec{a} \cdot
((1-1/k) \cdot (k-1)^{n-1} \cdot \vec{v} + 1/k \cdot (-1)^{n-1} \cdot \vec{w})
\\
& =
(1-1/k) \cdot (k-1)^{n} + 1/k \cdot (-1)^{n} .
\end{align*}
Note that this expression also holds when $n=0$
enumerating the empty word.
It remains to observe that
the length generating function is given by
\begin{align}
\sum_{n \geq 0}
\left(\left(1-\frac{1}{k}\right) \cdot (k-1)^{n} + \frac{(-1)^{n}}{k} \right) \cdot t^{n}
 & =
 \frac{1-1/k}{1 - (k-1) \cdot t} + \frac{1/k}{1+t}
 \nonumber \\
 & =
\frac{1}{(1 - (k-1) \cdot t) \cdot (1+t)}.
\label{equation_eigenvalue_generating_function}
\end{align}
Therefore, multiplying
equations~\eqref{equation_u_i} for $2 \leq i \leq k-1$
and equation~\eqref{equation_eigenvalue_generating_function}, the 
generating function for these $\RG$-words
is given by
\[
\frac{t^{k}}
{(1+t) \cdot (1-t) \cdot (1-2t) \cdots (1-(k-2) \cdot t) \cdot
(1-(k-1) \cdot t)} ,
\]
which is the generating function
$\sum_{n \geq k} B_{n-k,k}(-1) \cdot t^{n}$.
\end{proof}

The generating function appearing in this proof has been studied before: see the Monthly problem by Knuth and solved by
Lossers~\cite{knuth_monthly}. They present a different derivation of this generating function.

\begin{proof}[Second proof of Proposition~\ref{proposition_1_r_different_blocks_chromatic}]
Set $x = r$ in the generating function
in equation~\eqref{equation_box_generating_function}
to obtain
\begin{align}
\sum_{m \geq 0} B_{m,n}(r) \cdot t^{m}
& =
\frac{1}{(1 - r \cdot t) \cdot (1 - (r+1) \cdot t) \cdots (1 - (r+n) \cdot t)} .
\label{equation_x_r}
\end{align}
Observe that this is generating function of the number
of restricted growth words of the form
\begin{align*}
w = 1 \cdot 2 \cdots r \cdot u_{r} \cdot (r+1)\cdot u_{r+1} \cdots (r+n) \cdot u_{r+n} ,
\end{align*}
where $u_{i}$ is a word in the letters $1$ through $i$
and the words $u_{1}$ through $u_{r-1}$ are empty.
These restricted growth words
are in direct bijection with 
set partitions such 
that the elements $1, 2, \ldots, r$ all belong to separate blocks.
\end{proof}

\begin{proposition}
Let $r$ be a positive integer.
Then the box polynomial evaluated at $x=r$, $B_{m,n}(r)$,
is given by the sum
\[
B_{m,n}(r)=\sum_{i=0}^{r-1}
s(r,r-i) \cdot S(m+n+r-i, r+n) ,
\]
where $s(r,i)$ denotes the (signed) Stirling number of the first kind.
\end{proposition}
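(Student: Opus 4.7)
The plan is to start from the chromatic polynomial representation of $B_{m,n}(r)$ established in the proof of Proposition~\ref{proposition_1_r_different_blocks_chromatic}, namely
\[
B_{m,n}(r) = \frac{1}{(n+r)!} \cdot \Delta^{n+r}(x_{(r)} \cdot x^{m+n})\rvert_{x=0},
\]
and then to expand the lower factorial $x_{(r)}$ using the defining property of the signed Stirling numbers of the first kind. This reduces the problem to applying a classical identity to each monomial separately.

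Concretely, I would invoke the expansion
\[
x_{(r)} = \sum_{j=0}^{r} s(r,j) \cdot x^{j}
\]
and use linearity of $\Delta^{n+r}$ to rewrite
\[
B_{m,n}(r) = \sum_{j=0}^{r} s(r,j) \cdot \frac{1}{(n+r)!} \cdot \Delta^{n+r}(x^{j+m+n})\rvert_{x=0}.
\]
Applying the classical Stirling formula from equation~\eqref{equation_chromatic_classic_stirling} to each summand converts the inner expression into $S(j+m+n, n+r)$. Since $s(r,0) = 0$ whenever $r \geq 1$, the $j=0$ term vanishes, and the index substitution $i = r - j$ rewrites the sum with limits $0 \leq i \leq r-1$, producing exactly the stated identity.

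There is no substantial obstacle: the proof is essentially a formal manipulation combining two results already in the paper, the chromatic-polynomial form of $B_{m,n}(r)$ and the Stirling expansion~\eqref{equation_chromatic_classic_stirling}. The only verification needed is to confirm that the reindexing matches the summation range correctly and that the $j=0$ term may safely be dropped. As a sanity check, the case $r=1$ gives $B_{m,n}(1) = s(1,1) \cdot S(m+n+1, n+1) = S(m+n+1, n+1)$, recovering equation~\eqref{equation_box_polynomial_at_one}.
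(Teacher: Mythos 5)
Your proof is correct, and it takes a genuinely different route from the paper. The paper stays inside the generating-function framework of Section~\ref{section_box_polynomial_generating_functions}: it starts from equation~\eqref{equation_x_r}, multiplies numerator and denominator by $(1-t)(1-2t)\cdots(1-(r-1)t)$ so that the denominator becomes the full product $(1-t)\cdots(1-(r+n)t)$, recognizes the numerator as $\sum_{i=0}^{r-1} s(r,r-i)\cdot t^{i}$ via $t(t-1)\cdots(t-(r-1))=\sum_{k} s(r,k)\cdot t^{k}$ and the reciprocal of the denominator as $\sum_{j\geq 0} S(j+n+r,n+r)\cdot t^{j}$, and then reads off the coefficient of $t^{m}$ as a convolution. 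You instead start from the finite-difference identity $B_{m,n}(r)=\tfrac{1}{(n+r)!}\Delta^{n+r}\bigl(x_{(r)}\cdot x^{m+n}\bigr)\big\rvert_{x=0}$, which is indeed available as an intermediate equality in the proof of Proposition~\ref{proposition_1_r_different_blocks_chromatic} (read right to left), expand the falling factorial as $x_{(r)}=\sum_{j=0}^{r}s(r,j)\cdot x^{j}$, and apply the classical formula~\eqref{equation_chromatic_classic_stirling} termwise; the reindexing $i=r-j$ and the vanishing of the $j=0$ term (and, harmlessly, of any term with $j+m+n<n+r$ on both sides) give exactly the stated sum. The paper's argument makes the convolution structure of the answer transparent and never leaves the generating-function setting, while yours is a direct operator computation that avoids formal power series and instead leans on the chromatic-polynomial/difference-operator machinery of Section~\ref{section_box_polynomial_chromatic}; both reductions ultimately hinge on the same two ingredients, the Stirling expansion of the falling factorial and the difference-operator formula for $S(n,k)$.
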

\begin{proof}
By equation~\eqref{equation_x_r}
we have that
\begin{align*}
\sum_{m \geq 0} B_{m,n}(r) \cdot t^{m}
& =
\frac{(1-t) \cdot (1 - 2 \cdot t) \cdots (1 - (r-1) \cdot t)}
{(1 - t) \cdot (1 - 2 \cdot t) \cdots (1 - (r+n) \cdot t)} \\
& =
\left( \sum_{i=0}^{r-1} s(r,r-i) \cdot t^{i} \right)
\cdot
\left(
\sum_{j \geq 0} S(j+n+r,n+r) \cdot t^{j}
\right) .
\end{align*}
We used that
$p(t)= t \cdot (t-1) \cdots (t-(r-1)) = \sum_{k=0}^{r} s(r,k) \cdot t^{k}$, see~\cite[Proposition~1.3.7]{EC1}. 
The coefficient of $t^{m}$
follows by multiplying these two generating functions.
\end{proof}

\begin{proposition}
Let $r$ be a positive integer such that $n \geq 2r$. The box polynomial $B_{m,n}(x)$ 
evaluated at the integer $-r$ enumerates set partitions $\pi$
of $m+n-r$ elements into $n-r$ blocks
such that the minimal element of the $i$th block $B_{i}$
in the standard form of $\pi$
is congruent to $i$ modulo $2$ for $1\leq i\leq r+1$
when $r \neq n/2$. 
When $r = n/2$, the range of $i$ should be $1 \leq i \leq r$,
since there is no $(r+1)$st block.
\label{proposition_box_at_minus_r}
\end{proposition}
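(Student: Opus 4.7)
The plan is to adapt the restricted growth word and generating function approach used in the third proof of Proposition~\ref{proposition_chromatic_acyclic} and the second proof of Proposition~\ref{proposition_1_r_different_blocks_chromatic}. First I would substitute $x=-r$ into the generating function~\eqref{equation_box_generating_function} and pair the resulting factors $(1+jt)(1-jt) = 1 - j^{2}t^{2}$ for $j=1,\dots,r$, obtaining
\begin{align*}
\sum_{m \geq 0} B_{m,n}(-r)\,t^{m}
= \prod_{i=1}^{r} \frac{1}{1-i^{2}t^{2}} \cdot \prod_{i=r+1}^{n-r} \frac{1}{1-it}.
\end{align*}
The hypothesis $n \geq 2r$ is precisely what allows this grouping.

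Next I would interpret the right-hand side combinatorially via $\RG$-words. A set partition of $[m+n-r]$ into $k=n-r$ blocks in standard form corresponds uniquely to an $\RG$-word $v = 1 \cdot u_{1} \cdot 2 \cdot u_{2} \cdots k \cdot u_{k}$, with each $u_{i}$ a word over the alphabet $[i]$, and the minimum of the $i$th block is $i + \sum_{j<i}|u_{j}|$. Therefore the parity conditions $\min(B_{i}) \equiv i \pmod{2}$ for $2 \leq i \leq r+1$ are equivalent to $|u_{1}|, |u_{2}|, \ldots, |u_{r}|$ all being even (the condition at $i=1$ holds automatically). Since even-length words over $[i]$ have length generating function $1/(1-i^{2}t^{2})$, arbitrary words over $[i]$ have generating function $1/(1-it)$, and $t$ tracks $\sum_{i}|u_{i}|=m$, the generating function of $\RG$-words meeting these constraints coincides with the product displayed above. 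Comparing coefficients of $t^{m}$ yields the claim.

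The edge case $r=n/2$ is handled analogously: the second product is empty, there are only $r$ blocks, and the parity-index range shrinks to $1 \leq i \leq r$. The main subtlety to verify is the translation at the endpoint $i = r$, where the block-minimum conditions alone force only $|u_{1}|, \ldots, |u_{r-1}|$ to be even while the generating function demands $|u_{r}|$ be even as well; this is reconciled using Lemma~\ref{lemma_even_odd}, which gives $B_{m,2r}(-r)=0$ for odd $m$ and makes total-length parity automatic when the other conditions are imposed. Beyond this bookkeeping, the proof is a routine matching of two explicit generating functions, which I expect to be the easy part of the argument.
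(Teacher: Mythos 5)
Your proposal follows the paper's proof essentially verbatim: set $x=-r$ in the generating function~\eqref{equation_box_generating_function}, pair the factors $(1+it)(1-it)$ for $1\le i\le r$ (using $n-r\ge r$), and read the resulting product as the length generating function for $\RG$-words $1\cdot u_{1}\cdot 2\cdot u_{2}\cdots (n-r)\cdot u_{n-r}$ with $|u_{1}|,\dots,|u_{r}|$ even, which translates into the stated parity conditions on block minima. One caveat on the endpoint you flag: the appeal to Lemma~\ref{lemma_even_odd} only closes the case $r=n/2$ with $m$ even (there the minima conditions do force $|u_{r}|$ even); for odd $m$ the proposition's conditions leave $|u_{r}|$ free, and the two counts genuinely differ --- e.g.\ $B_{1,2}(-1)=0$ while the single partition of $\{1,2\}$ into one block satisfies the stated condition --- but this is an imprecision in the edge-case formulation of the proposition itself, which the paper's proof also glosses over, so it is not a gap in your argument relative to the paper.
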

\begin{proof}
Set $x=-r$ in equation~\eqref{equation_box_generating_function} to obtain:
\begin{align*}
\sum_{m \geq 0} B_{m,n}(-r) \cdot t^{m}
& =
\frac{1}{(1 + r \cdot t) \cdot (1 + (r-1) \cdot t) \cdots (1 +t)\cdot(1-t)\cdots(1-(n-r)\cdot t) } .
\end{align*}
Since $n-r\geq r$,
we can pair factors in this denominator to obtain the expression:
\begin{equation*}
\frac{1}{(1-t^2)\cdot(1-(2t)^2)\cdots(1-(rt)^2)\cdot(1-(r+1)\cdot t)\cdots(1-(n-r)\cdot t)}.
\end{equation*}
This expression
is the generating function for restricted growth 
words of the form
\[
w = 1 \cdot u_{1} \cdot 2 \cdot u_{2} \cdots r \cdot u_{r} \cdot (r+1) \cdot u_{r+1} \cdots (n-r) \cdot u_{n-r},
\]
such that the length of the word $u_{i}$ is even for $1 \leq i\leq r$. 
This implies that the length of $1 \cdot u_{1} \cdot 2 \cdot u_{2} \cdots i \cdot u_{i}$ 
has the same parity as $i$. In other words, the minimal element of the $(i+1)$st block of 
the partition~$\pi$ (in standard form) has the same parity as $i+1$ for $1\leq i\leq r$.
\end{proof}

\begin{proposition}\label{proposition_odd_set_partitions}
The expression
$2^{m} \cdot B_{m,n}(-n/2)$
enumerates
set partitions 
of a set of cardinality $m+n$ into $n$ blocks of odd size,
denoted by $T_{m+n,n}$.
\end{proposition}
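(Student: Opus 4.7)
The plan is to derive matching explicit closed forms for both sides of the identity by combining equation~\eqref{equation_box_via_Delta} with the classical exponential generating function for set partitions into odd-sized blocks. Both quantities will turn out to equal
\[
\frac{1}{n! \cdot 2^{n}} \sum_{r=0}^{n} (-1)^{n-r} \binom{n}{r} (2r - n)^{m+n}.
\]

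For the left-hand side, I would substitute $x = -n/2$ into the formula $B_{m,n}(x) = \frac{1}{n!} \sum_{r=0}^{n} (-1)^{n-r} \binom{n}{r} (x+r)^{m+n}$ provided by equation~\eqref{equation_box_via_Delta}, pull a factor of $2^{-(m+n)}$ out of each summand, and multiply through by $2^{m}$; this produces the claimed closed form for $2^m B_{m,n}(-n/2)$ in one line.

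For $T_{m+n,n}$, the EGF of nonempty sets of odd cardinality is $\sinh x$, so by the exponential formula the EGF in $N$ of set partitions of $[N]$ into $n$ unordered blocks of odd size is $(\sinh x)^{n}/n!$. Writing $\sinh x = (e^{x} - e^{-x})/2$ and expanding via the binomial theorem gives
\[
\frac{(\sinh x)^{n}}{n!} = \frac{1}{n! \cdot 2^{n}} \sum_{r=0}^{n} (-1)^{n-r} \binom{n}{r} e^{(2r-n)x},
\]
and extracting the coefficient of $x^{m+n}/(m+n)!$ yields exactly the same closed form for $T_{m+n,n}$, completing the argument.

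The only step that calls on input from outside the paper is the identification of $(\sinh x)^{n}/n!$ as the EGF of $T_{N,n}$, which is a standard application of the exponential formula for labeled species. To remain entirely inside the generating function/$\RG$-word framework of this section, an alternative is to begin from
$\sum_{m} 2^{m} B_{m,n}(-n/2) \cdot t^{m} = \prod_{k=0}^{n} 1/(1-(2k-n)t)$,
pair the factors via $k \leftrightarrow n-k$ to rewrite this as a product of terms of the form $1/(1 - c^{2} t^{2})$, and then identify the resulting OGF with one counting $\RG$-words for odd-block set partitions by pairing up the non-initial occurrences of each label. The hard step of this second route is the combinatorial bijection matching the paired structure with an odd-block partition, which is why the algebraic route via equation~\eqref{equation_box_via_Delta} is preferable.
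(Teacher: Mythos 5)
Your proposal is correct and is essentially the paper's own proof: both evaluate equation~\eqref{equation_box_via_Delta} at $x=-n/2$ to obtain $2^{m}\cdot B_{m,n}(-n/2)=\frac{1}{2^{n}\cdot n!}\sum_{r=0}^{n}(-1)^{n-r}\binom{n}{r}(2r-n)^{m+n}$, and then match this with the coefficient of $x^{m+n}/(m+n)!$ in the exponential generating function $\sinh(x)^{n}/n!=(e^{x}-e^{-x})^{n}/(2^{n}\cdot n!)$ expanded by the binomial theorem. The alternative $\RG$-word route you sketch is not needed.
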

\begin{proof}
Using equation~\eqref{equation_box_via_Delta}
evaluated at $x=-n/2$ yields
\begin{equation}
2^{m} \cdot B_{m,n}(-n/2)
=
\frac{1}{2^{n} \cdot n!}
\cdot
\sum_{r=0}^{n}
(-1)^{n-r} \cdot
\binom{n}{r} \cdot
(2r-n)^{m+n} .
\label{equation_odd}
\end{equation}
The exponential generating
function for partitions with
$n$ blocks with odd cardinalities
is
$\sinh(x)^{n}/n!$ $=$ $(e^{x}-e^{-x})^{n}/(2^{n} \cdot n!)$.
Using the binomial theorem and
considering the coefficient of
$x^{m+n}/(m+n)!$ yields the
right hand side
of equation~\eqref{equation_odd}.
\end{proof}

\begin{remark}
\label{remark_box_roots}
From Lemma~\ref{lemma_even_odd}
it follows that when $m$ is odd the box polynomial $B_{m,n}$
has $-n/2$ as a root of odd multiplicity.
This also follows from
Proposition~\ref{proposition_odd_set_partitions}
since when $m$ is odd $m+n$ and $n$ have different parities.
However, when $m$ is even and greater than or equal to $2$
there are at least 
$\binom{m+n}{n-1}$
partitions of $m+n$ into $n$ odd sized blocks,
namely the set partitions
consisting of $n-1$ singleton blocks and
one block of size $m+1$.
Therefore $-n/2$ is not a root of the box polynomial when $m \geq 2$ is even.
Finally, returning to the case when $m$ is odd, we know that the root $-n/2$
does not have multiplicity greater than $1$,
since by Lemma~\ref{lemma_D} this would imply 
that its derivative $B_{m-1,n}(-n/2)$ has a root at $-n/2$,
contradicting that $m-1$ is even.
\end{remark}

We now continue to discuss the number of set partitions
where all the blocks have odd cardinality. 
We begin to express this number in terms 
of Stirling numbers of the second kind.

\begin{corollary}
\label{corollary_odd_parts_generating_function}
For $n$ even, the ordinary generating function for
the numbers $T_{m+n,n}$ is given by
\[
\sum_{m \geq 0} T_{m+n,n} \cdot t^{m}
=
\frac{1}
{(1 - 2^{2} \cdot t^{2}) \cdot (1 - 4^{2} \cdot t^{2}) \cdots (1 - n^{2} \cdot t^{2})}.
\]
\label{corollary_t_m_n_generating_function}
\end{corollary}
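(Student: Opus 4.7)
The plan is to combine Proposition~\ref{proposition_odd_set_partitions} with the generating function from equation~\eqref{equation_box_generating_function}, and then use the assumption that $n$ is even to pair up factors symmetrically.

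First, by Proposition~\ref{proposition_odd_set_partitions} we have $T_{m+n,n} = 2^{m} \cdot B_{m,n}(-n/2)$, so
\[
\sum_{m \geq 0} T_{m+n,n} \cdot t^{m}
= \sum_{m \geq 0} B_{m,n}(-n/2) \cdot (2t)^{m}.
\]
Next I would set $x = -n/2$ in the generating function~\eqref{equation_box_generating_function} and substitute $t \mapsto 2t$. This yields
\[
\sum_{m \geq 0} B_{m,n}(-n/2) \cdot (2t)^{m}
= \prod_{k=0}^{n} \frac{1}{1 - (-n/2 + k) \cdot 2t}
= \prod_{k=0}^{n} \frac{1}{1 - (2k-n) \cdot t} .
\]

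The final step is to exploit that $n$ is even. The values $2k - n$ for $k = 0, 1, \ldots, n$ are the integers $-n, -n+2, \ldots, -2, 0, 2, \ldots, n-2, n$. Since $n$ is even, one of these (at $k = n/2$) equals $0$, contributing a trivial factor of $1$, and the remaining $2k - n$ come in pairs $\pm j$ for $j \in \{2, 4, \ldots, n\}$. Pairing them gives $(1 - j t)(1 + j t) = 1 - j^{2} t^{2}$, so the product collapses to
\[
\prod_{j \in \{2,4,\ldots,n\}} \frac{1}{1 - j^{2} t^{2}}
= \frac{1}{(1 - 2^{2} t^{2})(1 - 4^{2} t^{2}) \cdots (1 - n^{2} t^{2})} ,
\]
as claimed. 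There is no serious obstacle here: the whole argument is a direct substitution followed by a symmetric pairing of linear factors, and parity of $n$ is exactly what makes the pairing work out cleanly (it is also what ensures $-n/2$ is an integer, matching the evaluation point in Proposition~\ref{proposition_odd_set_partitions}).
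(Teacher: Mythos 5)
Your proposal is correct and follows essentially the same route as the paper: substitute $x=-n/2$ into equation~\eqref{equation_box_generating_function}, rescale $t\mapsto 2t$ via Proposition~\ref{proposition_odd_set_partitions}, and pair the symmetric linear factors $(1-jt)(1+jt)=1-j^{2}t^{2}$, with the $k=n/2$ factor contributing trivially. No differences worth noting.
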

\begin{proof}
By Proposition~\ref{proposition_odd_set_partitions}
and the generating function in equation~\eqref{equation_box_generating_function}
we have
that
\begin{align*}
\sum_{m \geq 0} T_{m+n,n} \cdot t^{m}
& =
\sum_{m \geq 0} B_{m,n}(-n/2) \cdot (2t)^{m} \\
& =
\frac{1}{(1 + n/2 \cdot 2t) \cdot (1 + (n/2 -1) \cdot 2t) \cdots 
(1 - (n/2 -1) \cdot 2t) \cdot (1 - n/2 \cdot 2t)} .
\end{align*}
The last step is combine factors
using $(1 + k \cdot t) \cdot (1 - k \cdot t) = 1 - k^{2} \cdot t^{2}$.
\end{proof}

By equating the coefficients of $t^{m}$ in
Corollary~\ref{corollary_t_m_n_generating_function}
we have an immediate corollary.
\begin{corollary}
\label{corollary_T_m_n_homogeneous}
For $m$ and $n$ both even, 
the number $T_{m+n,n}$ is given by 
the complete symmetric function
\[ T_{m+n,n}=h_{m/2}(2^{2},4^{2},\ldots,n^{2}) . \]
\end{corollary}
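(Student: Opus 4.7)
The plan is to extract the coefficient of $t^{m}$ from the generating function in Corollary~\ref{corollary_t_m_n_generating_function} and recognize it as a complete homogeneous symmetric function. Since we are told $m$ is even, write $m = 2k$ with $k = m/2$. The denominator of the generating function is a polynomial in $t^{2}$, so only even powers of $t$ appear; in particular $[t^{2k}]$ of the generating function equals $[s^{k}]$ of the same expression with the substitution $s = t^{2}$, giving
\[
T_{m+n,n} = [s^{m/2}] \, \frac{1}{(1 - 2^{2} \cdot s)(1 - 4^{2} \cdot s) \cdots (1 - n^{2} \cdot s)}.
\]

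Next I would invoke the standard identity for the generating function of the complete homogeneous symmetric function in $n/2$ variables $x_{1}, \ldots, x_{n/2}$, namely
\[
\sum_{k \geq 0} h_{k}(x_{1}, \ldots, x_{n/2}) \cdot s^{k} = \prod_{i=1}^{n/2} \frac{1}{1 - x_{i} \cdot s}.
\]
Setting $x_{i} = (2i)^{2}$ for $1 \leq i \leq n/2$ matches the denominator above exactly, and extracting the coefficient of $s^{m/2}$ yields $h_{m/2}(2^{2}, 4^{2}, \ldots, n^{2})$, which is the desired formula.

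There is no real obstacle here, as the result is essentially a direct coefficient extraction; the only subtlety to flag is that the parity restriction on $m$ is needed because the generating function is a power series in $t^{2}$, so for odd $m$ one would get zero on both sides (consistent with $T_{m+n,n}$ vanishing when $m+n$ and $n$ have different parities, as noted in Remark~\ref{remark_box_roots}).
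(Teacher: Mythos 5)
Your proof is correct and follows essentially the same route as the paper, which obtains the corollary by equating coefficients of $t^{m}$ in the generating function of Corollary~\ref{corollary_t_m_n_generating_function}; you have simply made explicit the substitution $s=t^{2}$ and the standard identity $\sum_{k\geq 0} h_{k}(x_{1},\ldots,x_{n/2})\cdot s^{k}=\prod_{i=1}^{n/2}(1-x_{i}\cdot s)^{-1}$. The parity remark about odd $m$ is a correct and harmless addition.
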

See also the proof of this result in~\cite{RG_words} using $\RG$-words,
multivariate generating functions and integer walks.

We now look at another consequence of the generating function for $B_{m,n}(x)$ given in
equation~\eqref{equation_box_generating_function}.
\begin{corollary}
\label{corollary_T_m_n_stirling_numbers}
Let $n$ be an even integer.
Then the number of set partitions of a set of cardinality $m+n$
into $n$ blocks of odd size is given by
the following convolution of Stirling numbers of the second kind
\[
T_{m+n,n}
=
2^{m} \cdot
\sum_{k=0}^{m} (-1)^{k} \cdot S(k+n/2,n/2) \cdot  S(m-k+n/2,n/2).
\]
\end{corollary}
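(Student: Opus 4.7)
My plan is to work directly from the product expression for the ordinary generating function of $T_{m+n,n}$ provided by Corollary~\ref{corollary_t_m_n_generating_function}, and exhibit the right-hand side of the stated identity as the Cauchy product of two Stirling-number generating functions.

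First I would rewrite the denominator in Corollary~\ref{corollary_t_m_n_generating_function} using the factorization $1 - k^2 t^2 = (1 - k t)(1 + k t)$ for each even $k = 2, 4, \ldots, n$. Introducing the substitution $u = 2t$, this turns the generating function into
\[
\sum_{m \geq 0} T_{m+n,n} \cdot t^{m}
=
\frac{1}{\prod_{j=1}^{n/2} (1 - j \cdot u) \cdot \prod_{j=1}^{n/2} (1 + j \cdot u)} .
\]

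Next I would recognize each of the two products individually. By equation~\eqref{equation_restriction_idea} with $k = n/2$,
\[
\frac{1}{\prod_{j=1}^{n/2} (1 - j \cdot u)}
=
\sum_{a \geq 0} S(a + n/2, n/2) \cdot u^{a},
\]
and substituting $u \mapsto -u$ yields
\[
\frac{1}{\prod_{j=1}^{n/2} (1 + j \cdot u)}
=
\sum_{b \geq 0} (-1)^{b} \cdot S(b + n/2, n/2) \cdot u^{b} .
\]

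Finally I would multiply these two power series and extract the coefficient of $u^{m}$ via the usual Cauchy product, obtaining
\[
\sum_{m \geq 0} T_{m+n,n} \cdot t^{m}
=
\sum_{m \geq 0}
\left(
\sum_{k=0}^{m} (-1)^{k} \cdot S(k + n/2, n/2) \cdot S(m - k + n/2, n/2)
\right) u^{m} ,
\]
and then restore $u = 2t$ so that $u^{m} = 2^{m} \cdot t^{m}$. Equating coefficients of $t^{m}$ on both sides gives the claimed convolution identity. There is no real obstacle here since the argument is just bookkeeping; the only point requiring a little care is the sign convention in the second factor, and the reliance on $n$ being even so that $n/2$ is an integer and the pairing of factors in the denominator is complete.
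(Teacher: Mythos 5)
Your proposal is correct and follows essentially the same route as the paper: both factor the even-index generating function of Corollary~\ref{corollary_t_m_n_generating_function} (after rescaling $t$ by $2$) into $\prod_{j=1}^{n/2}(1-jt)^{-1}$ times $\prod_{j=1}^{n/2}(1+jt)^{-1}$, identify these as the Stirling generating functions with and without alternating signs, and read off the convolution. Your introduction of $u=2t$ is only a notational variant of the paper's writing the series in the variable $t/2$.
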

\begin{proof}
We factor the generating function for $2^{-m} \cdot T_{m+n,n}$
as 
\begin{align*}
\sum_{m \geq 0} T_{m+n,n} \cdot (t/2)^{m}
& =
\frac{1}{(1 - t^{2}) \cdot (1 - 2^{2} \cdot t^{2}) \cdots (1 - (n/2)^{2} \cdot t^{2})} \\
& =
\frac{1}{(1 + t) \cdot (1 + 2 \cdot t) \cdots (1 + n/2 \cdot t)}
\cdot
\frac{1}{(1 - t) \cdot (1 - 2 \cdot t) \cdots (1 - n/2 \cdot t)} .
\end{align*}
The second factor is the generating function for
the Stirling numbers $S(m+n/2,n/2)$.
The first factor is the generating function for
$(-1)^{m} \cdot S(m+n/2,n/2)$.
The result follows since the product of generating functions
corresponds to the convolution of the coefficients.
\end{proof}

A second proof of Corollary~\ref{corollary_T_m_n_stirling_numbers}
comes via
Proposition~\ref{proposition_odd_set_partitions}
and 
Lemma~\ref{lemma_split} with $n = (n/2 - 1) + n/2 + 1$,
\begin{align*}
T_{m+n,n}
&=
2^{m} \cdot
B_{m,n}(-n/2) \\
&=
2^{m} \cdot
\sum_{k=0}^{m} B_{k,n/2-1}(-n/2) \cdot B_{m-k,n/2}(0) \\
&=
2^{m} \cdot
\sum_{k=0}^{m} (-1)^{k} \cdot S(k+n/2,n/2) \cdot  S(m-k+n/2,n/2) ,
\end{align*}
where the last step uses
equations~\eqref{equation_box_polynomial_at_zero}
and~\eqref{equation_box_polynomial_at_minus_n_minus_one}.

\section{Bijective interlude}
\label{section_bijective_interlude}

A few results in the previous two sections beg for bijective proofs.
We first extend the notion~$S(G,k)$ to collections of partitions.
For a graph $G$
let~$\mathcal{S}(G,k)$ be the collection of set partitions of the vertex set of $G$
into $k$ blocks such that adjacent vertices of $G$ 
are in different blocks.
Hence~$S(G,k)$ is the cardinality of $\mathcal{S}(G,k)$,
that is, $S(G,k)=|\mathcal{S}(G,k)|$.

The chromatic polynomial of a tree $T$ on $n$ vertices is $t \cdot (t-1)^{n-1}$.
Hence by Theorem~\ref{theorem_chromatic}, the number of
partitions in $\mathcal{S}(T,k)$ is independent of
the tree $T$. We give a bijective proof of this fact.
Recall that a thicket is a forest with two connected
components; see~\cite{bollobas}.

\begin{proposition}
\label{proposition_tree_bijection}
Let $F = H_{1} \cup H_{2}$
be a thicket comprised of
the trees $H_{1}$ and $H_{2}$. Let $x$ and $z$ be
(not necessarily distinct) vertices belonging to
$H_{1}$
and $y$ and $w$ be (not necessarily distinct) vertices belonging to $H_{2}$,
such that $T_{1} = F \cup \{xy\}$ and
$T_{2} = F \cup \{zw\}$ are both trees.
Then there is a bijection between
$\mathcal{S}(T_{1},k)$ and $\mathcal{S}(T_{2},k)$.
\end{proposition}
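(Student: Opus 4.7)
The plan is to build a direct involution on the set of proper $k$-partitions of the forest $F$ that swaps membership in $\mathcal{S}(T_{1},k)$ with membership in $\mathcal{S}(T_{2},k)$. The crucial observation is that a partition $\pi$ of the disjoint union $V(F)=V(H_{1})\sqcup V(H_{2})$ decomposes canonically into a triple $(\pi_{1},\pi_{2},\mathcal{M})$, where $\pi_{i}=\pi|_{V(H_{i})}$ is a partition of $V(H_{i})$ and $\mathcal{M}\subseteq\pi_{1}\times\pi_{2}$ records which blocks are merged in $\pi$, meaning $(B_{1},B_{2})\in\mathcal{M}$ exactly when $B_{1}\cup B_{2}$ is a block of $\pi$. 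First I would check that $\mathcal{M}$ is always a bipartite matching, in the sense that no block of $\pi_{1}\sqcup\pi_{2}$ lies in two pairs, that the triple $(\pi_{1},\pi_{2},\mathcal{M})$ recovers $\pi$ with $|\pi_{1}|+|\pi_{2}|-|\mathcal{M}|=k$, and that $\pi$ is proper for $F$ if and only if $\pi_{1}$ and $\pi_{2}$ are proper for $H_{1}$ and $H_{2}$ respectively (immediate since $F$ has no edges between $V(H_{1})$ and $V(H_{2})$).

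Under this decomposition, the constraints $\pi\in\mathcal{S}(T_{1},k)$ and $\pi\in\mathcal{S}(T_{2},k)$ become $(X,Y)\notin\mathcal{M}$ and $(Z,W)\notin\mathcal{M}$ respectively, where $X,Z\in\pi_{1}$ are the blocks containing $x,z$ and $Y,W\in\pi_{2}$ are the blocks containing $y,w$. To build the bijection, given $\pi\in\mathcal{S}(T_{1},k)$ I would let $\sigma$ be the involution of $\pi_{1}$ that transposes $X$ and $Z$ (the identity if $X=Z$), let $\tau$ be the corresponding involution of $\pi_{2}$ transposing $Y$ and $W$, and set $\mathcal{M}'=\{(\sigma(u),\tau(v)) : (u,v)\in\mathcal{M}\}$. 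Since $\sigma\times\tau$ is a bijection of $\pi_{1}\times\pi_{2}$, the set $\mathcal{M}'$ is again a matching of the same cardinality as $\mathcal{M}$, and the triple $(\pi_{1},\pi_{2},\mathcal{M}')$ yields a well-defined partition $\pi'$ of $V(F)$ into $k$ blocks.

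The verification then collapses to a one-line check: $(Z,W)\in\mathcal{M}'$ exactly when its preimage $(\sigma^{-1}(Z),\tau^{-1}(W))=(X,Y)$ lies in $\mathcal{M}$, so $\pi\in\mathcal{S}(T_{1},k)$ if and only if $\pi'\in\mathcal{S}(T_{2},k)$; and because $\sigma,\tau$ are involutions, applying the construction to $\pi'$ returns $\pi$, giving a bijection. The only real obstacle is setting up the canonical decomposition $\pi\leftrightarrow(\pi_{1},\pi_{2},\mathcal{M})$ and checking that $\mathcal{M}$ is always a matching; once this is in place the rest is a permutation of coordinates, and the tree structure of $H_{1}$ and $H_{2}$ is never used — the same argument in fact produces a bijection for any pair of vertex-disjoint graphs joined by a single bridge.
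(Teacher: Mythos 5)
Your construction is correct and is in essence the same bijection as the paper's: swapping the $H_{1}$-parts of the blocks containing $x$ and $z$ and the $H_{2}$-parts of the blocks containing $y$ and $w$ (the paper's ``switch elements between blocks $B_{i}$ and $C_{i}$ if they belong to the subtree $H_{i}$'') is exactly your relabeling of the block-matching $\mathcal{M}$ by $\sigma\times\tau$ under the decomposition $\pi\leftrightarrow(\pi_{1},\pi_{2},\mathcal{M})$. Your formulation merely makes explicit what the paper leaves as a terse verification---well-definedness, preservation of the block count $k$, and the involution/inverse check---and correctly observes in passing that the tree structure of $H_{1}$ and $H_{2}$ is never used.
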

\begin{proof}
Given a partition $\pi$ in $\mathcal{S}(T_{1},k)$.
Let $B_{1}$, $B_{2}$, $C_{1}$ and $C_{2}$ be the four blocks of $\pi$
containing the elements $x$, respectively, $y$, $z$ and $w$.
For $i = 1,2$ switch elements between blocks $B_{i}$ and $C_{i}$ if
they belong to the subtree $H_{i}$.
The partition $\tau$ created this way is a partition whose
blocks are independent sets of the tree $T_{2}$.
Furthermore, the map $\pi \longmapsto \tau$
is a bijection that preserves the number of blocks~$k$.
\end{proof}

Since any tree can be obtained from any other tree by switching edges we obtain
the desired bijection.
As an example, consider the star tree,
that is, the tree where every edge is connected to a given vertex, say $n$.
For this tree, the cardinality of $\mathcal{S}(T,k)$ is given by
the Stirling number $S(n-1,k-1)$ of the second kind, since the vertex $n$
must be a singleton block.
Hence we know that for any tree $T$ on $n$ vertices the cardinality of
$\mathcal{S}(T,k)$ is $S(n-1,k-1)$.

It is a known fact that the set partitions of
an $n$-set without singleton blocks is equinumerous with the
set partitions of the same set without two cyclically
consecutive elements
in the same block. A bijective
proof was given by Callan~\cite[Theorem~1]{Callan}.
Here we present a shorter bijection.
\begin{lemma}
There is a bijection between
set partitions of $[n]$ such that
$i$ and $i+1$ are not in the same block,
including $1$ and $n$,
and
set partitions of $[n]$ with no singleton blocks.
\label{Stanley_exercise}
\end{lemma}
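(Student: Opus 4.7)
The plan is to construct an explicit bijection $\phi\colon\mathcal{A}\to\mathcal{B}$, where $\mathcal{A}$ denotes the set of partitions of $[n]$ with no $i,i+1$ (cyclically, including the pair $n,1$) in the same block, and $\mathcal{B}$ denotes the set of no-singleton partitions of $[n]$. The guiding invariant is the subset $S\subseteq[n]$ that records, for $\tau\in\mathcal{A}$, the singleton positions $\Sigma(\tau)$, and, for $\pi\in\mathcal{B}$, the cyclic-adjacency positions $S(\pi)=\{i:i \text{ and } i+1\pmod{n} \text{ lie in the same block of }\pi\}$.

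The first candidate I would try is algorithmic. Given $\tau\in\mathcal{A}$ with singletons $\Sigma(\tau)=\{s_{1}<s_{2}<\cdots<s_{k}\}$, process $s_{1},\ldots,s_{k}$ in turn: for each $s_{j}$, merge the singleton $\{s_{j}\}$ with the block currently containing $s_{j}+1\pmod{n}$. Since each merge yields a block of size at least two, this should produce a partition with no singletons. In the reverse direction, given $\pi\in\mathcal{B}$, one reads off $S(\pi)$ and, processing in the opposite order, detaches each $i\in S(\pi)$ from its block by splitting the cyclic adjacency $i,i+1$. An alternative starting point is to work directly with $\RG$-words as in Section~\ref{section_box_polynomial_generating_functions}: the membership in $\mathcal{A}$ translates to $v_{i}\neq v_{i+1}$ for $1\leq i\leq n-1$ together with $v_{1}\neq v_{n}$, while membership in $\mathcal{B}$ translates to each letter value appearing at least twice, which might suggest a cleaner transformation at the level of words.

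The main obstacle will be verifying that this algorithm (or a suitable refinement) truly yields a well-defined bijection. Small examples show that the naive version of the rule can fail when several singletons of $\tau$ cluster together or when a non-singleton block of $\tau$ could naturally absorb more than one singleton; the resulting map may then fail to be injective, or the putative inverse may be ambiguous. The technical heart of the proof will therefore be to pinpoint the correct rule --- perhaps by insisting on block-level merges rather than single-element moves, by carefully specifying the processing order, or by a recursive reduction on the cyclic-arc structure induced by $S$ --- and then to write down a concrete two-sided inverse. Boundary effects from the cyclic identification $n\equiv 0$ and the extreme cases (such as $\tau$ consisting entirely of singletons, or $\pi$ being a single block) will need to be handled uniformly.
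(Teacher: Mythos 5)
Your proposal is a plan rather than a proof: you name a candidate map (merge each singleton with the block of its cyclic successor) and then correctly observe that it ``may fail to be injective'' and that ``the technical heart'' is still to be found --- but that heart is exactly what is missing, and the naive rule does fail. For example, for even $n$ the all-singleton partition, the partition $13\cdots(n-1)|2|4|\cdots|n$, and the partition $1|3|\cdots|(n-1)|24\cdots n$ all collapse under your rule to the one-block partition $\{[n]\}$, and runs of consecutive singletons cause further ambiguity in the putative inverse (when you process $s_{j}$ and $s_{j}+1$ is itself a singleton, later steps chain whole runs into the following non-singleton block, destroying the information needed to split them back apart). So as written there is no well-defined two-sided inverse, and the statement is not yet proved.

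The paper's proof supplies precisely the missing rule. Within each maximal cyclic run of singletons $\{i\},\{i+1\},\ldots,\{i+j\}$ one merges the singletons \emph{in pairs}: $\{i\}$ with $\{i+1\}$, $\{i+2\}$ with $\{i+3\}$, and so on; only when the run has odd length does the last singleton get absorbed into the non-singleton block containing $i+j+1$. The all-singleton partition is sent to $\{[n]\}$ by fiat. This pairing rule makes the merges reversible (the inverse peels off adjacencies starting from a block containing $i,i+1$ but not $i+2$, working downward), except for one residual collision when $n$ is even: the two alternating partitions above also map to $\{[n]\}$. The paper patches this by redefining the map on just those two partitions, sending them to the two perfect matchings $23|45|\cdots|n1$ and $12|34|\cdots|(n-1)n$, which are otherwise not in the image. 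So the collision you anticipated is real, and the resolution requires both the run-pairing idea and the explicit exceptional-case reassignment; neither appears in your proposal, and your alternative suggestion via restricted growth words is likewise left undeveloped.
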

\begin{proof}
Define a map
$\varphi : \bigcup_{k=0}^{n} \mathcal{S}(C_{n},k)
\longrightarrow \mathcal{Q}_{n}$ by the following
procedure,
where
$\mathcal{Q}_{n}$ denotes the collection
of set partitions of $[n]$ containing no
singleton blocks.
If $\pi$ consists of all singletons,
then assign $\varphi(\pi)$ to the partition $\{[n]\}$,
that is, the partition consisting of a single block.
Otherwise, consider each maximal run
of singleton blocks
$\{i\}, \{i+1\}, \ldots, \{i+j\}$
where a run is defined modulo~$n$,
and $B$ is the non-singleton block
containing the element $i+j+1$.
Merge the singletons $\{i\}$ and $\{i+1\}$,
the singletons $\{i+2\}$ and $\{i+3\}$,
and so on.
If $j$ is odd, the last pair to be merged
together is
$\{i+j-1\}$ and $\{i+j\}$.
If $j$ is even, the last pair to be merged
together is
$\{i+j\}$ and the non-singleton block $B$.
Note that $\varphi$ maps into $\mathcal{Q}_{n}$.

Note, however, that
$\varphi$ is not necessarily a bijection.
When $n$ is even,
note that
the two partitions
$$
\pi_{1} = 13\cdots(n-1)|2|4| \cdots |n
\,\text{ and }\,
\pi_{2} = 1|3|\cdots|(n-1)|24 \cdots n
$$
both map to the partition $\{[n]\}$.
Hence, define the map
$\psi$ by
$\psi(\pi_{1}) = 23|45|\cdots|n1$,
$\psi(\pi_{2}) = 12|34|\cdots|(n-1)n$
and
$\psi(\pi) = \varphi(\pi)$
for $\pi \neq \pi_{1},\pi_{2}$.

The reverse map of $\psi$
is defined by first considering the three special cases.
The merging process is reversed by starting
with any block containing $i$ and $i+1$, but not $i+2$.
Then remove $i$ to its own block, and starting with $i-1$,
look for the next smaller occurrence of an adjacency
and continue to perform this operation.
It is straightforward to see that these maps
are inverses of one another.
\end{proof}

\begin{corollary}
For $n \geq 2$,
the number of set partitions of $[n]$
with no singleton blocks is given by
$$
\sum_{j=2}^{n} B_{n-j,j}(-1) .
$$
\label{corollary_no_singleton}
\end{corollary}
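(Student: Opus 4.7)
The plan is to combine the bijection from Lemma~\ref{Stanley_exercise} with the block-refined enumeration provided by Proposition~\ref{proposition_chromatic_acyclic}. By Lemma~\ref{Stanley_exercise}, set partitions of $[n]$ with no singleton blocks are equinumerous with set partitions of $[n]$ in which $i$ and $i+1$ lie in different blocks for every $i$, where indices are taken modulo $n$ (so that the pair $1,n$ is included). So it suffices to count the latter family.

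Next, I would stratify the cyclically-restricted partitions by their number of blocks $k$. For each fixed $k$ with $2 \leq k \leq n$, Proposition~\ref{proposition_chromatic_acyclic} asserts that the number of such partitions is exactly $B_{n-k,k}(-1)$. Summing over $k$ gives
\[
\sum_{k=2}^{n} B_{n-k,k}(-1),
\]
which is the claimed formula after renaming $k$ to $j$.

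The small point to justify is the range of summation. The lower bound $k \geq 2$ comes from Proposition~\ref{proposition_chromatic_acyclic} itself, and it is also forced combinatorially since a one-block partition of $[n]$ with $n \geq 2$ always places $1$ and $2$ together. The upper bound $k = n$ corresponds to the all-singletons partition, which trivially satisfies the cyclic separation condition; consistently, $B_{0,n}(-1) = 1$. There is no real obstacle here: once the bijective reduction of Lemma~\ref{Stanley_exercise} is in hand and Proposition~\ref{proposition_chromatic_acyclic} is applied term-by-term, the corollary drops out immediately as a sum over block counts.
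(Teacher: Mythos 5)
Your proposal is correct and follows essentially the same route as the paper: reduce via the bijection of Lemma~\ref{Stanley_exercise} to cyclically-restricted partitions, apply Proposition~\ref{proposition_chromatic_acyclic} for each fixed number of blocks $j$, and sum over $2 \leq j \leq n$. Your extra remarks about the endpoints of the summation range are a harmless elaboration of what the paper leaves implicit.
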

\begin{proof}
Using
Proposition~\ref{proposition_chromatic_acyclic},
the box polynomial
$B_{n-j,j}(-1)$ counts set partitions of $n$ into $j$ blocks 
avoiding $i$ and $i+1$ in the same block cyclically. The result now 
follows by summing over all block sizes and with Lemma~\ref{Stanley_exercise}.
\end{proof}

\begin{remark}
{\rm
Proposition~\ref{proposition_chromatic_acyclic}
and~\ref{proposition_box_at_minus_r}
both yield a combinatorial interpretation
for the box polynomial evaluated at $x=-1$,
that is, $B_{n-k,k}(-1)$.
First, we have $S(C_{n},k)$,
which enumerates set partitions of $[n]$ into $k$ blocks
such that
$i$ and $i+1$ do not belong to same block
and $1$ and $n$ also do not belong to same block.
Second, we have
set partitions
of $[n-1]$ into $k-1$ blocks written in standard form such that
the minimal element of the second block
is even. A bijection proving that these
two interpretations are equinumerous
is given by
restricting
the bijection in the solution to
Exercise~108(a) in~\cite[Chapter 1]{EC1}.
}
\end{remark}

\section{Fast Fourier operators}
\label{section_fast_fourier}

In this section, we generalize Proposition~\ref{proposition_odd_set_partitions} to set partitions with block sizes $1$ modulo $r$.
Let $\omega=e^{2\pi i/r}$ be a primitive $r$th root of unity.
Recall that $E^{a}$ is the shift operator $E^{a}(p(x)) = p(x+a)$.
Furthermore, an operator $T$ is
shift invariant if it commutes with $E^{a}$ for all $a$.
\begin{definition}
\label{fast_fourier_definition}
The \emph{fast Fourier operator} $F_{r}$ is given by
$$
F_{r}
=
\frac{1}{r} \cdot \sum_{j=0}^{r-1}\omega^{-j} \cdot E^{\omega^{j}} . 
$$
\end{definition}

Since $\omega$ is a complex number, it is not clear that $F_{r}$ applied to a real polynomial $p(x)$ is still a real polynomial.
Let $f_{r}(x)$ be the generating function
$\sum_{m \equiv 1 \bmod r} {x^{m}}/{m!}$.
\begin{lemma}
The fast Fourier operator $F_{r}$ is
given by $f_{r}(D)$, where $D$ is the derivative operator.
Especially, the fast Fourier operator restricts to
an operator on the polynomial ring
$\mathbb{R}[x]$.
\end{lemma}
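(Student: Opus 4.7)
The plan is to interpret the shift operator as the exponential of the derivative operator and then apply the orthogonality relation for roots of unity. Specifically, for any polynomial $p(x)$, Taylor's theorem gives $E^{a}(p(x)) = p(x+a) = \sum_{m \geq 0} a^{m} \cdot D^{m}(p(x))/m!$, so as operators on $\mathbb{R}[x]$ we have the formal identity $E^{a} = e^{aD} = \sum_{m \geq 0} a^{m} \cdot D^{m}/m!$ (the sum is finite when applied to any given polynomial). Substituting $a = \omega^{j}$ yields $E^{\omega^{j}} = \sum_{m \geq 0} \omega^{jm} \cdot D^{m}/m!$.

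Next I would plug this into the definition of $F_{r}$ and interchange the order of summation:
\[
F_{r}
=
\frac{1}{r} \sum_{j=0}^{r-1} \omega^{-j} \sum_{m \geq 0} \omega^{jm} \cdot \frac{D^{m}}{m!}
=
\sum_{m \geq 0} \frac{D^{m}}{m!} \cdot \left( \frac{1}{r} \sum_{j=0}^{r-1} \omega^{j(m-1)} \right).
\]
The inner sum is the standard orthogonality relation for $r$th roots of unity: it equals $1$ when $m \equiv 1 \pmod{r}$ and $0$ otherwise. Consequently $F_{r} = \sum_{m \equiv 1 \bmod r} D^{m}/m! = f_{r}(D)$, which is the claimed identity.

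For the second assertion, I would observe that for any polynomial $p(x) \in \mathbb{R}[x]$ only finitely many terms of $f_{r}(D)(p(x))$ are nonzero (since $D^{m}(p) = 0$ for $m$ larger than the degree of $p$), and each surviving term $D^{m}(p)/m!$ is a real polynomial. Hence $F_{r}$ restricts to an operator on $\mathbb{R}[x]$. No step presents a real obstacle; the only subtlety is justifying the interchange of summations, but since one of the sums is finite (over $0 \leq j \leq r-1$) and the other is effectively finite when evaluated on a polynomial, this is automatic.
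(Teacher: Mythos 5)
Your proof is correct and follows essentially the same route as the paper: expanding $E^{\omega^{j}}$ via Taylor's theorem as $\sum_{m\geq 0}\omega^{jm}D^{m}/m!$, interchanging sums, and invoking orthogonality of the $r$th roots of unity to isolate the terms with $m\equiv 1\pmod r$. The added remark about finiteness of $D^{m}(p)$ justifying both the interchange and the restriction to $\mathbb{R}[x]$ is a fine, if minor, elaboration of what the paper leaves implicit.
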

\begin{proof}
By Taylor's theorem we have that
$E^{a}=\sum_{m \geq 0}(aD)^{m}/m!$;
see~\cite[Theorem~2]{rota}, hence 
\begin{align*}
F_{r}
& =
\frac{1}{r} \cdot \sum_{j=0}^{r-1} \omega^{-j} \cdot \sum_{m \geq 0} \omega^{j \cdot m} \cdot \frac{D^{m}}{m!}
=
\frac{1}{r} \cdot \sum_{m \geq 0} \frac{D^{m}}{m!} \cdot \sum_{j=0}^{r-1} \omega^{j \cdot (m-1)}
=
\sum_{m \equiv 1 \bmod r}\frac{D^{m}}{m!}.
\qedhere
\end{align*}
\end{proof}

The name \emph{fast Fourier operator} comes from the system of equations one must solve to find the
complex coefficients $\alpha_{j}$ such that
$f_{r}(x)
=
\sum_{j=0}^{r-1} \alpha_{j} \cdot e^{\omega^{j} \cdot x}$.
In particular, to arrive at
Definition~\ref{fast_fourier_definition}, 
one needs to invert the
$r$ by $r$ matrix whose $(i,j)$ entry
is $\omega^{i \cdot j}$,
that is,
the \emph{fast Fourier matrix}.

The next result shows that the fast Fourier operator $F_{r}$
satisfies an analogous result to Proposition~\ref{proposition_odd_set_partitions}.

\begin{proposition}
\label{fast_fourier_set_partitions}
The number of partitions of the set $[m]$ into $k$ blocks of cardinality $1$ modulo $r$ is given by
$1/k! \cdot F_{r}^{k} (x^{n})\big|_{x=0}$.
\end{proposition}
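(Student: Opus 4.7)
The plan is to combine the preceding lemma $F_r = f_r(D)$ with the exponential formula for set partitions whose block sizes are restricted.

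First, I would invoke the exponential formula: if the exponential generating function (EGF) for a single ``allowed block'' on $n$ labeled points is $g(x)$, then the EGF for partitions of $[m]$ into exactly $k$ unordered allowed blocks is $g(x)^k/k!$. Taking an allowed block to be a block of cardinality congruent to $1$ modulo $r$, the EGF for a single block is
\[
\sum_{m \equiv 1 \bmod r} \frac{x^{m}}{m!} = f_{r}(x).
\]
Hence the number of partitions of $[m]$ into $k$ blocks of cardinality $1 \bmod r$ equals $m! \cdot [x^{m}]\bigl(f_{r}(x)^{k}/k!\bigr)$.

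Second, I would record the elementary identity that for any formal power series $g(x) = \sum_{j \geq 0} b_{j} \cdot x^{j}$ one has
\[
g(D)(x^{m})\bigr|_{x=0} = m! \cdot b_{m} = m! \cdot [x^{m}] g(x),
\]
since $D^{j}(x^{m})|_{x=0}$ equals $m!$ when $j=m$ and $0$ otherwise. This identity turns ``applying a power series in $D$ to $x^{m}$ and evaluating at $0$'' into a coefficient extraction.

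Third, I would apply the preceding lemma to write $F_{r}^{k} = f_{r}(D)^{k} = (f_{r}^{k})(D)$, and then invoke the identity of the previous paragraph with $g = f_{r}^{k}$ to obtain
\[
\frac{1}{k!} \cdot F_{r}^{k}(x^{m})\bigr|_{x=0}
= \frac{m!}{k!} \cdot [x^{m}] f_{r}(x)^{k},
\]
which matches the count from the exponential formula. There is no real obstacle here; the only things to be careful about are (i) that $f_{r}(x)$ really is the EGF for a single block of size $\equiv 1 \bmod r$ (so the $m!/m!$ bookkeeping in the definition of an EGF is already built in), and (ii) that the factor of $1/k!$ correctly accounts for the blocks being unordered. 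Everything else is immediate from the lemma.
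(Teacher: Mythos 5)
Your proof is correct and follows essentially the same route as the paper: both start from $F_{r}=f_{r}(D)$, reduce the evaluation $F_{r}^{k}(x^{n})\big|_{x=0}$ to $n!\cdot[x^{n}]f_{r}(x)^{k}$, and interpret that quantity combinatorially before dividing by $k!$. The only difference is packaging: the paper writes out the resulting multinomial sum $\sum n!/(b_{1}!\cdots b_{k}!)$ and reads it as counting ordered set partitions, whereas you cite the exponential formula for $f_{r}(x)^{k}/k!$, which is the same argument.
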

\begin{proof}
We have
\begin{align*}
F_{r}^{k} (x^{n})\big|_{x=0}
& =
\left.
\left(
\sum_{b_{1} \equiv 1 \bmod r} \frac{D^{b_{1}}}{b_{1}!}
\right)
\cdots
\left(
\sum_{b_{k} \equiv 1 \bmod r} \frac{D^{b_{k}}}{b_{k}!}
\right)
(x^{n})\right|_{x=0} 
= 
\sum_{\substack{b_{1} + \cdots + b_{k} = n \\ b_{j} \equiv 1 \bmod r}}
\frac{n!}{b_{1}! \cdots b_{k}!} ,
\end{align*}
which is the number of ordered set partitions of $[n]$ into $k$ blocks
with each block size is congruent to $1$ modulo $r$.
Dividing by $k!$ yields the result.
\end{proof}

A \emph{delta operator} is a polynomial operator $T$ such that $T$ is shift invariant
and $T(x)\neq 0$; see~\cite{rota}.
The fast Fourier operator $F_{r}$ is shift invariant, as it is a linear combination of shift operators,
and
$F_{r}(x) = \sum_{j=0}^{r-1} \omega^{-j} \cdot (x+\omega^{j}) = r$, hence $F_{r}$ is a delta operator.

Associated to any delta operator $T$ is a \emph{basic sequence} of polynomials $p_{n}(x)$ such that:
(i)
$T(p_{n}(x)) = n \cdot p_{n-1}(x)$,
(ii)
$p_{0}(x)=1$,
(iii)
$p_{n}(0)=0$ for $n>0$. 
The sequence of basic polynomials for any delta operator $T$ is a sequence of binomial type,
that is,
$p_{n}(x+y) = \sum_{k=0}^{n} \binom{n}{k} \cdot p_{k}(x) \cdot p_{n-k}(y)$.
We now determine the basic sequence for the fast Fourier operator $F_{r}$.

Recall that $\Pi_{n}$ is the partition lattice on a set of cardinality $n$.
Define $\Pi_{n}^{r,1}$ to be the subposet of $\Pi_{n}$
consisting of all set partitions where the block sizes are
congruent to $1$ modulo $r$, that is,
$$
\Pi_{n}^{r,1}
=
\{\pi \in \Pi_{n} 
\:\: : \:\: 
\forall B \in \pi \: |B| \equiv 1 \bmod r
\} .
$$
Observe that $\Pi_{n}^{r,1}$ has a minimal element, that is, the
partition consisting of all singleton blocks.
When $n$ is congruent to $1$ modulo $r$,
$\Pi_{n}^{r,1}$ has a maximal element, namely the set partition
consisting of one block.
Finally, for $n$ congruent to $1$ modulo $r$ define
$\mu(n)$ to the M\"obius function $\mu(\Pi_{n}^{r,1}) = \mu_{\Pi_{n}^{r,1}}(\widehat{0},\widehat{1})$
and set $\mu(n)$ to be $0$ for $n \not\equiv 1 \bmod r$.

\begin{lemma}
The compositional inverse of the generating function
$f_{r}(x) = \sum_{n \equiv 1 \bmod r} x^{n}/n!$ is given by
$$
h_{r}(x)
=
\sum_{n \equiv 1 \bmod r}
\mu(n)
\cdot
\frac{x^{n}}{n!} .
$$
\end{lemma}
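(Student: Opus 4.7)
The plan is to verify the compositional identity $f_r(h_r(x)) = x$ directly by expanding the left-hand side as a formal power series and identifying the coefficients via Möbius inversion on the subposet $\Pi_N^{r,1}$.

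The first step is to write
\begin{align*}
f_r(h_r(x))
&= \sum_{k \equiv 1 \bmod r} \frac{h_r(x)^k}{k!} ,
\end{align*}
so that $[x^N/N!]\, f_r(h_r(x))$ equals a weighted sum over set partitions $\pi$ of $[N]$ with $|\pi| \equiv 1 \bmod r$, with weight $\prod_{B \in \pi} \mu(|B|)$, by the standard EGF rule for $F(x)^k/k!$. Since $\mu(n) = 0$ whenever $n \not\equiv 1 \bmod r$, only partitions with every block size $\equiv 1 \bmod r$ contribute, which forces $|\pi| \equiv N \bmod r$. In particular, the coefficient vanishes for $N \not\equiv 1 \bmod r$, matching $[x^N/N!]\, x = 0$ in that range.

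When $N \equiv 1 \bmod r$, the restriction $|\pi| \equiv 1 \bmod r$ is automatic and the sum runs over all of $\Pi_N^{r,1}$. I would then recognize the weight $\prod_{B \in \pi} \mu(|B|)$ as the single Möbius value $\mu_{\Pi_N^{r,1}}(\widehat{0}, \pi)$: the interval $[\widehat{0}, \pi]$ in $\Pi_N^{r,1}$ consists precisely of the partitions of $[N]$ obtained by independently refining each block $B$ of $\pi$ into sub-blocks of sizes $\equiv 1 \bmod r$, hence $[\widehat{0}, \pi] \cong \prod_{B \in \pi} \Pi_{|B|}^{r,1}$. The product rule for Möbius functions on direct products of posets then gives $\mu_{\Pi_N^{r,1}}(\widehat{0}, \pi) = \prod_{B \in \pi} \mu(|B|)$, so
\[
[x^N/N!]\, f_r(h_r(x)) = \sum_{\pi \in \Pi_N^{r,1}} \mu_{\Pi_N^{r,1}}(\widehat{0}, \pi) .
\]

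The final step invokes the standard identity $\sum_{p \in P} \mu_P(\widehat{0}, p) = \delta(\widehat{0}, \widehat{1})$ valid in any finite poset with a $\widehat{0}$ and a $\widehat{1}$. Since $N \equiv 1 \bmod r$, the poset $\Pi_N^{r,1}$ possesses the one-block partition as $\widehat{1}$, so the sum is $1$ precisely when $\widehat{0} = \widehat{1}$, that is $N = 1$, and $0$ otherwise. Combining the three steps, $[x^N/N!]\, f_r(h_r(x)) = \delta_{N,1}$, hence $f_r(h_r(x)) = x$ and $h_r$ is the compositional inverse of $f_r$. The main obstacle I expect is making the interval isomorphism $[\widehat{0}, \pi] \cong \prod_B \Pi_{|B|}^{r,1}$ rigorous and handling the base case cleanly: one must check that the singleton poset $\Pi_1^{r,1}$ contributes $\mu(1) = 1$ so that the product rule applies uniformly, and that no subtlety appears when a block of $\pi$ itself has size $1$.
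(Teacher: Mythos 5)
Your proposal is correct and follows essentially the same route as the paper: expand $f_r(h_r(x))$ by composition of exponential generating functions, note that the weight $\prod_{B\in\pi}\mu(|B|)$ kills all partitions outside $\Pi_N^{r,1}$ (so the coefficient vanishes for $N\not\equiv 1\bmod r$), identify that weight with $\mu_{\Pi_N^{r,1}}(\widehat{0},\pi)$ via the product structure of the interval, and conclude with the defining recursion of the M\"obius function. The only difference is that you make explicit the interval isomorphism $[\widehat{0},\pi]\cong\prod_{B\in\pi}\Pi_{|B|}^{r,1}$, which the paper leaves implicit.
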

\begin{proof}
By composition of exponential generating functions we have that
\begin{align*}
\left[\frac{x^{n}}{n!}\right]
f_{r}(h_{r}(x))
& =
\sum_{\substack{\pi \in \Pi_{n} \\ |\pi| \equiv 1 \bmod r}}
\prod_{B \in \pi} \mu(|B|) 
=
\sum_{\substack{\pi \in \Pi_{n} \\ |\pi| \equiv 1 \bmod r}}
\prod_{B \in \pi} \mu(\Pi_{|B|}^{r,1})  .
\end{align*}
Observe that if $n \not\equiv 1 \bmod r$ then this sum is empty
and hence equal to $0$. Hence we continue under the
assumption that $n \equiv 1 \bmod r$.
\begin{align*}
\left[\frac{x^{n}}{n!}\right]
f_{r}(h_{r}(x))
& =
\sum_{\pi \in \Pi_{n}^{r,1}} \mu_{\Pi_{n}^{r,1}}(\widehat{0}, \pi)  
=
\delta_{n,1} .
\end{align*}
Hence the composition $f_{r}(h_{r}(x))$ is $x$, proving the lemma.
\end{proof}

\begin{theorem}
The sequence of basic polynomials for the fast Fourier operator $F_{r}$ is given by 
\[
p_{n}(x) = \sum_{\pi \in \Pi_{n}^{r,1}}  \prod_{B \in \pi} \mu(|B|)  \cdot x^{|\pi|} .
\] 
\end{theorem}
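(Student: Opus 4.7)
The plan is to combine Rota's umbral theory for delta operators with the preceding lemma. Since we have already established that $F_{r} = f_{r}(D)$ with $f_{r}(0)=0$ and $f_{r}'(0)=1$, the general theory of delta operators (see Rota~\cite{rota}) implies that the basic sequence $\{p_{n}(x)\}$ for $F_{r}$ has exponential generating function
$$
\sum_{n \geq 0} p_{n}(x) \cdot \frac{t^{n}}{n!} = e^{x \cdot h_{r}(t)},
$$
where $h_{r}$ is the compositional inverse of $f_{r}$. The preceding lemma has already identified $h_{r}(t) = \sum_{n \equiv 1 \bmod r} \mu(n) \cdot t^{n}/n!$, which is exactly the poset-theoretic ingredient I need.

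Expanding via the exponential formula then finishes the proof. Write
$$
e^{x \cdot h_{r}(t)} = \sum_{k \geq 0} \frac{x^{k} \cdot h_{r}(t)^{k}}{k!}
$$
and observe that $n! \cdot [t^{n}] h_{r}(t)^{k}/k!$ counts unordered set partitions $\pi$ of $[n]$ into exactly $k$ blocks whose sizes all lie in $\{1, r+1, 2r+1, \ldots\}$, each weighted by $\prod_{B \in \pi} \mu(|B|)$. Summing over $k$ and identifying $k$ with $|\pi|$ produces $\sum_{\pi \in \Pi_{n}^{r,1}} \prod_{B \in \pi} \mu(|B|) \cdot x^{|\pi|}$, which is the stated formula.

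The main obstacle is justifying the exponential generating function in the first step. If one prefers not to quote Rota's first expansion theorem wholesale, the alternative is to verify directly that the candidate polynomials satisfy the three defining axioms of a basic sequence: (i) $p_{0}(x)=1$ is immediate since only the empty partition contributes; (ii) $p_{n}(0)=0$ for $n \geq 1$ holds because every partition in $\Pi_{n}^{r,1}$ has $|\pi| \geq 1$, so no term is constant in $x$; and (iii) $F_{r}(p_{n}(x)) = n \cdot p_{n-1}(x)$. Property (iii) is the delicate one, but it falls out from the generating function side using $D^{k} e^{x h_{r}(t)} = h_{r}(t)^{k} \cdot e^{x h_{r}(t)}$, so that $F_{r}(e^{x h_{r}(t)}) = f_{r}(h_{r}(t)) \cdot e^{x h_{r}(t)} = t \cdot e^{x h_{r}(t)}$, and this matches the exponential generating function of the sequence $\{n \cdot p_{n-1}(x)\}_{n \geq 0}$.
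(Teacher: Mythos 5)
Your proposal is correct and follows essentially the same route as the paper: both quote Rota's result that the basic sequence of a delta operator $f_{r}(D)$ has exponential generating function $e^{x \cdot h_{r}(t)}$ with $h_{r}$ the compositional inverse of $f_{r}$, and then extract the coefficient via composition of exponential generating functions, using the preceding lemma's identification of $h_{r}$. Your additional sketch verifying the basic-sequence axioms directly is a fine supplement but does not change the substance of the argument.
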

\begin{proof}
Using~\cite[Corollary~3]{rota}
and composition of exponential generating functions
we have
\begin{align*}
p_{n}(x)
& =
\left[ \frac{u^{n}}{n!} \right]
\sum_{j \geq 0} p_{j}(x) \cdot \frac{u^{j}}{j!}
=
\left[ \frac{u^{n}}{n!} \right]
e^{x \cdot h_{r}(u)}
=
\sum_{\pi \in \Pi_{n}^{r,1}}  \prod_{B \in \pi} (\mu(|B|) \cdot x) .
\qedhere
\end{align*}
\end{proof}

\begin{example}
The fast Fourier operator $F_{2}$ is related to the forward difference operator $\Delta$.
Let $M_{a}$ be the operator defined by the substitution $M_{a}(p(x)) = p(a \cdot x)$.
Then we have
$$
F_{2} = \frac{1}{2} \cdot M_{1/2} E^{-1/2} \Delta M_{2} . 
$$
Thus the $n$th power is given by
$$
F_{2}^{n} = 2^{-n} \cdot M_{1/2} E^{-n/2} \Delta^{n} M_{2} ,
$$
and we obtain
\begin{align*}
F_{2}^{n}(x^{m+n})
& =
2^{-n} \cdot M_{1/2} E^{-n/2} \Delta^{n} M_{2}(x^{m+n}) \\
& =
2^{m} \cdot M_{1/2} E^{-n/2} \Delta^{n} (x^{m+n}) \\
& =
2^{m} \cdot n! \cdot M_{1/2} E^{-n/2} B_{m,n}(x) \\
& =
2^{m} \cdot n! \cdot B_{m,n}\left((x-n)/{2}\right) ,
\end{align*}
showing that the polynomial $F_2^{n}(x^{m+n})$
is an affine transformation of the box polynomial $B_{m,n}(x)$.

\end{example}

\section{Bounds on the roots}
\label{section_box_polynomial_bounds_on_roots}

We now discuss the location of
the roots of the box polynomial $B_{m,n}(x)$.

\begin{theorem}
All roots of the box polynomial $B_{m,n}(x)$ have real part $-n/2$.
\label{theorem_root_real_part}
\end{theorem}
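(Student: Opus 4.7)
My plan is to combine the operator description of $B_{m,n}$ with a classical result of P\'olya about how the forward difference operator acts on the zero set of a polynomial. By Theorem~\ref{theorem_operator_interpretation} we have $B_{m,n}(x) = \frac{1}{n!} \cdot \Delta^{n}(x^{m+n})$, and since the scalar $1/n!$ affects no zeros, it suffices to track the zeros of $\Delta^{k}(x^{m+n})$ as $k$ ranges from $0$ to $n$.

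The base case $k=0$ is the polynomial $x^{m+n}$, whose zeros are all at the origin and hence lie on the vertical line $\Re(z) = 0$. For the inductive step I would appeal to P\'olya's theorem in the following form: if $p(x)$ is a polynomial of positive degree all of whose zeros lie on a single vertical line $\Re(z) = c$, then all zeros of $\Delta p(x) = p(x+1)-p(x)$ lie on the parallel vertical line $\Re(z) = c - \tfrac{1}{2}$. Applying this statement $n$ times starting from $c = 0$ shows that every zero of $\Delta^{n}(x^{m+n})$, and hence of $B_{m,n}(x)$, has real part equal to $-n/2$.

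The main obstacle is locating and citing P\'olya's theorem in exactly the form needed; this is precisely why the introduction already advertises the argument as using ``the difference operator description of the box polynomial and a result of P\'olya.'' As a sanity check, Lemma~\ref{lemma_even_odd} establishes that the zero multiset of $B_{m,n}$ is invariant under $\alpha \mapsto -n-\alpha$, and combined with the complex-conjugate symmetry coming from real coefficients, this forces the zeros to be symmetric about the point $-n/2$ on the real axis. This is strictly weaker than but fully consistent with the stronger conclusion that all zeros lie on the vertical line $\Re(z) = -n/2$, and can be used to spot-check the argument on small examples such as the computation $B_{2,2}(x) = 6x^{2}+12x+7$, whose two zeros $-1 \pm i\sqrt{24}/12$ indeed have real part $-n/2 = -1$.
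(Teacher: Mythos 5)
Your proposal is correct and follows essentially the same route as the paper: invoke Theorem~\ref{theorem_operator_interpretation} to write $B_{m,n}(x)$ as $\Delta^{n}(x^{m+n})/n!$ and apply P\'olya's result (that $\Delta$ shifts a vertical line of zeros left by $1/2$) $n$ times starting from $x^{m+n}$. The only addition you would need is the citation the paper supplies (P\'olya's problem, solved by Obreschkoff, or Lemma~9.13 of Postnikov--Stanley for a more general statement).
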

\begin{proof}
If the polynomial $p(x)$
has roots all with real part~$a$,
then the polynomial $\Delta(p(x))$
has roots with all real parts $a-1/2$.
This statement is due to
P{\'o}lya~\cite{Polya},
who stated it as a problem
which was solved by
Obreschkoff~\cite{Obreschkoff}.
(For a more general statement,
see~Lemma 9.13
in~\cite{Postnikov_Stanley}.)
Applying this result $n$ times to
the polynomial $x^{m+n}$
yields the result;
see~Theorem~\ref{theorem_operator_interpretation}.
\end{proof}

We can now improve Remark~\ref{remark_box_roots}.

\begin{corollary}
For $n \geq 1$, the box polynomial $B_{m,n}(x)$
has no multiple roots.
\end{corollary}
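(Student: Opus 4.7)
The plan is to argue by contradiction, combining Lemma~\ref{lemma_D} (derivative formula), Proposition~\ref{proposition_box_recursion} (Pascal-like recursion), and Theorem~\ref{theorem_root_real_part} (real-part location). Suppose $\alpha$ is a multiple root of $B_{m,n}(x)$. Then $\alpha$ is a root of both $B_{m,n}(x)$ and its derivative, and by Lemma~\ref{lemma_D} the derivative equals $(m+n)\cdot B_{m-1,n}(x)$. So $\alpha$ must be a common root of $B_{m,n}$ and $B_{m-1,n}$.

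Next, I would evaluate the recursion $B_{m,n}(x) = x\cdot B_{m-1,n}(x) + B_{m,n-1}(x+1)$ at $x=\alpha$. Since both $B_{m,n}(\alpha)=0$ and $B_{m-1,n}(\alpha)=0$, the identity collapses to $B_{m,n-1}(\alpha+1)=0$, so $\alpha+1$ is a root of $B_{m,n-1}$.

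Now I would invoke Theorem~\ref{theorem_root_real_part} twice. Being a root of $B_{m,n}$ gives $\mathrm{Re}(\alpha) = -n/2$, while being a root of $B_{m,n-1}$ gives $\mathrm{Re}(\alpha+1) = -(n-1)/2$, which rearranges to $\mathrm{Re}(\alpha) = -(n+1)/2$. The incompatibility $-n/2 \neq -(n+1)/2$ delivers the contradiction. The hypothesis $n \geq 1$ is used exactly here, to guarantee that $B_{m,n-1}$ is a well-defined box polynomial whose roots obey the real-part theorem; at the boundary $n=1$ the polynomial $B_{m,0}(x)=x^m$ has its only root at $0$, so the real-part claim still applies and yields $\alpha=-1$ versus $\mathrm{Re}(\alpha)=-1/2$, the same contradiction.

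Honestly, there isn't a hard step here: the recursion is usually exploited as an algebraic tool, but the key observation is that Theorem~\ref{theorem_root_real_part} turns it into a rigid geometric constraint, because the summands on the right-hand side have their roots pinned to two different vertical lines. The only mild subtlety is checking that the base cases $m \in \{0,1\}$ do not require a separate argument (in those cases $B_{m,n}$ has degree $\leq 1$ and is trivially squarefree, so the contradiction argument is either vacuous or unnecessary).
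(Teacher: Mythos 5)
Your proposal is correct and is essentially the paper's argument: the paper computes $\gcd\bigl(B_{m,n},\tfrac{d}{dx}B_{m,n}\bigr)=\gcd\bigl(B_{m,n-1}(x+1),B_{m-1,n}(x)\bigr)$ via Lemma~\ref{lemma_D} and Proposition~\ref{proposition_box_recursion}, then notes these two polynomials have roots on the distinct vertical lines $\operatorname{Re}(x)=-(n+1)/2$ and $\operatorname{Re}(x)=-n/2$ by Theorem~\ref{theorem_root_real_part}, which is exactly your contradiction phrased through the gcd instead of a single putative multiple root $\alpha$. Your handling of the boundary cases ($n=1$, small $m$) is fine and matches the spirit of the paper's proof.
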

\begin{proof}
Note that the greatest common divisor satisfies
\begin{align*}
\gcd\left( B_{m,n}(x), \frac{d}{dx} B_{m,n}(x) \right)
& =
\gcd\left( x \cdot B_{m-1,n}(x) + B_{m,n-1}(x+1),
(m+n) \cdot B_{m-1,n}(x) \right) \\
& =
\gcd\left( B_{m,n-1}(x+1), B_{m-1,n}(x) \right) .
\end{align*}
Further, notice that all the roots
of $B_{m,n-1}(x+1)$
have real part $-(n-1)/2 - 1 = -n/2 - 1/2$,
whereas
$B_{m-1,n}(x)$
has all roots with real part $-n/2$.
Thus the greatest common divisor is $1$,
and hence, we conclude $B_{m,n}(x)$ has no multiple roots.
\end{proof}

\begin{example}
When $n=1$ the roots of the box polynomial 
$B_{m,1}(x) = (x+1)^{m+1} - x^{m+1}$
are given by
$$
- \frac{1}{2}
+
i \cdot \frac{1}{2}
\cdot
\frac{\sin\left(\frac{2\pi \cdot j}{m+1}\right)}
{\cos\left(\frac{2\pi \cdot j}{m+1}\right) -1}
$$
for $1 \leq j \leq m$.
Note that the largest imaginary part is about 
$(m+1)/2\pi$.
\end{example}

\begin{example}
When $n=2$ the real roots of the box polynomial 
$B_{m,2}(x)$ are of the form $-1 + i \cdot u$
where $u = \frac{v}{\sqrt{1-v^{2}}}$ and
$v$ is a root of the equation
$T_{m+2}(v) = v^{m+2}$,
where $T_{m+2}$ is the Chebyshev polynomial of the first kind.
\end{example}

\begin{example}
For $1 \leq m \leq 5$ the imaginary part of the roots of the
box polynomial $B_{m,n}(x)$ are listed in Table~\ref{table_box_imag_roots}.
\end{example}
\begin{table}
\begin{center}
\begin{tabular}{c | l}
$m$ & \\ \hline
$1$ & $0$ \\
$2$ & $\pm \sqrt{n/12}$ \\
$3$ & $0$, $\pm \sqrt{n/4}$ \\
$4$ & $\displaystyle \pm \sqrt{\frac{30 n\pm\sqrt{150n^2+30n}}{120}}$\\
$5$ & $\displaystyle 0, \pm \sqrt{\frac{10 n\pm\sqrt{5n^2+3n}}{24}}$
\end{tabular}
\end{center}
\caption{The imaginary parts of the roots of the box polynomial $B_{m,n}(x)$ for $1 \leq m \leq 5$.}
\label{table_box_imag_roots}
\end{table}
\begin{theorem}
The imaginary parts of the roots of the box polynomial $B_{m,n}(x)$
are bounded above by $mn/\pi$ and below by $-mn/\pi$.
\label{theorem_imaginary_bound}
\end{theorem}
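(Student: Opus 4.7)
The plan is to combine the iterated-integral form of the finite difference operator with a polar-coordinate argument to show that, for $|y| > mn/\pi$, the value $B_{m,n}(-n/2 + iy)$ cannot vanish.

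First, I would establish the integral representation
\[
B_{m,n}(x) = \binom{m+n}{n}\int_{[0,1]^n}(x + u_1 + \cdots + u_n)^m \, du_1 \cdots du_n,
\]
obtained by iterating the identity $\Delta f(x) = \int_0^1 f'(x+u)\,du$ to get $\Delta^n f(x) = \int_{[0,1]^n} f^{(n)}(x + u_1 + \cdots + u_n)\,du$, then applying this to $f(x) = x^{m+n}$ and invoking Theorem~\ref{theorem_operator_interpretation}.

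Next, fix $y > 0$ and substitute $x = -n/2 + iy$. Setting $W = \sum_{i=1}^{n}(u_i - 1/2)$, which is real and lies in $[-n/2, n/2]$, we have $x + u_1 + \cdots + u_n = iy + W$. Write $iy + W = r(W)\, e^{i\theta(W)}$ in polar form, where $\theta(W) = \pi/2 - \arctan(W/y)$. Because $|W/y| \leq n/(2y)$, the angle is confined to $[\pi/2 - \arctan(n/(2y)),\, \pi/2 + \arctan(n/(2y))]$, so $m\theta(W)$ lies in an interval centered at $m\pi/2$ of half-width $\alpha := m\arctan(n/(2y))$.

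The key claim is that if $\alpha < \pi/2$, then this integral for $B_{m,n}(-n/2+iy)$ is nonzero. Indeed, under this hypothesis $\cos(m\theta(W) - m\pi/2) \geq \cos\alpha > 0$ for every admissible $W$, so
\[
\operatorname{Re}\!\left(e^{-im\pi/2}\!\int_{[0,1]^n}(iy+W)^m\,du\right) \;=\; \int_{[0,1]^n} r(W)^m \cos\!\bigl(m\theta(W) - m\pi/2\bigr)\,du \;>\; 0.
\]
The condition $\alpha < \pi/2$ rearranges to $y > n/(2\tan(\pi/(2m)))$, and the elementary estimate $\tan(x) > x$ on $(0,\pi/2)$ yields $n/(2\tan(\pi/(2m))) < mn/\pi$. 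Hence $y > mn/\pi$ forces $B_{m,n}(-n/2+iy) \neq 0$, and the case $y < -mn/\pi$ follows by complex conjugation since $B_{m,n}$ has real coefficients. The main thing to get right is the polar-angle bookkeeping so that the angular half-width scales as $m\arctan(n/(2y))$ rather than the weaker $(m+n)\arctan(n/(2y))$ one would obtain by working directly with the explicit sum in Theorem~\ref{theorem_operator_interpretation}; the integral representation is exactly what delivers the sharper exponent.
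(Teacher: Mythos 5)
Your argument is correct, but it reaches the bound by a different decomposition than the paper. The paper works directly with Definition~\ref{definition_box}: for $z=-n/2+iy$ with $y\geq mn/\pi$, each factor of each term $\prod_{j=1}^m(z+\lambda_j)$ has real part in $[-n/2,n/2]$, hence argument within $\pi/(2m)$ of $\pi/2$ (via $\arctan\theta<\theta$), so every product has argument strictly between $(m-1)\pi/2$ and $(m+1)\pi/2$; a finite sum of terms lying in a common open half-plane cannot vanish. You instead rewrite $B_{m,n}$ as the continuous positive superposition $\binom{m+n}{n}\int_{[0,1]^n}(x+u_1+\cdots+u_n)^m\,du$ (a Hermite--Genocchi-type identity obtained by iterating $\Delta f(x)=\int_0^1 f'(x+u)\,du$, and your constant is right) and then run the same angular bookkeeping on the integrand; the half-plane mechanism, the role of the window $[-n/2,n/2]$ for the real part, and the use of $\tan x>x$ are identical in spirit, so the key geometric idea coincides with the paper's, while your route bypasses the partition sum at the modest cost of first justifying the integral representation, and it would apply equally to $\Delta^n f$ for other $f$ whose $n$th derivative is a nonnegative multiple of $x^m$. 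Two small points to tidy up: (i) like the paper, you implicitly use Theorem~\ref{theorem_root_real_part} to restrict attention to the line $\operatorname{Re}x=-n/2$; make that citation explicit, since nonvanishing on that line alone says nothing about hypothetical roots elsewhere. (ii) Your threshold $y>n/(2\tan(\pi/(2m)))$ degenerates at $m=1$, where $\tan(\pi/2)$ is undefined; dispose of $m\leq 1$ separately, e.g.\ by noting that $B_{1,n}$ is linear with the real root $-n/2$, or by observing that $\alpha=\arctan(n/(2y))<\pi/2$ holds automatically when $m=1$.
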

\begin{proof}
Assume that $z = -n/2 + i \cdot y$ where $y \geq mn/\pi$.
For $0 \leq \lambda_{j} \leq n$ we have that
the real part of $z + \lambda_{j}$ lies in in the closed interval
$[-n/2,n/2]$.
Hence the argument of $z + \lambda_{j}$
is bounded by
$$
{\pi/2} - \frac{\pi/2}{m}
<
{\pi/2} - \arctan\left(\frac{n/2}{mn/\pi}\right)
\leq
\arg(z + \lambda_{j})
\leq
{\pi/2} + \arctan\left(\frac{n/2}{mn/\pi}\right)
<
{\pi/2} + \frac{\pi/2}{m}  ,
$$
where we used the inequality
$\arctan(\theta) < \theta$ for $\theta$ positive. 
Thus the argument of the product $\prod_{j=1}^{m} (z + \lambda_{j})$
is bounded by
$$
(m-1) \cdot {\pi/2}
<
\arg\left(\prod_{j=1}^{m} (z + \lambda_{j})\right)
<
(m+1) \cdot {\pi/2} .
$$
Hence for all partitions $\lambda$
the products
$\prod_{j=1}^{m} (z + \lambda_{j})$
all lie in the same open half-plane.
Therefore their sum, which is the box polynomial $B_{m,n}(z)$,
also lies in this open half-plane.
Thus $B_{m,n}(z)$
is non-zero, proving the upper bound.
The lower bound follows by complex conjugation.
\end{proof}

A different bound is obtained as follows.

\begin{theorem}
The roots $\{z_{j}\}$ of the box polynomial
$B_{m,n}(x)$
lie in the annulus
with inner radius~$n/2$
and
outer radius
$S(m+n,n) \cdot
(2/n)^{m-1} \cdot
\binom{m+n}{n}^{-1}$,
that is,
$$
n/2
\leq
|z_{j}|
\leq
\frac{S(m+n,n)}{(\frac{n}{2})^{m-1} \cdot \binom{m+n}{n}} .
$$
\label{theorem_roots_annulus}
\end{theorem}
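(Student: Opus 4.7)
The plan is to derive both bounds from ingredients already established in the excerpt, using Theorem~\ref{theorem_root_real_part} on the real parts of the roots together with Vieta's formulas. The two bounds interact: the lower bound on every root will be the key input for establishing the upper bound.

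For the inner radius, I would apply Theorem~\ref{theorem_root_real_part} directly. Every root of $B_{m,n}(x)$ has real part equal to $-n/2$, so if $z_j$ is a root we can write $z_j = -n/2 + i\, y_j$ for some $y_j \in \mathbb{R}$. Then
\[
|z_j|^{2} = (n/2)^{2} + y_j^{2} \geq (n/2)^{2} ,
\]
yielding $|z_j| \geq n/2$.

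For the outer radius, I would use Vieta's formula applied to $B_{m,n}(x)$. The remarks following Definition~\ref{definition_box} show that the leading coefficient is $\binom{m+n}{m}$, and equation~\eqref{equation_box_polynomial_at_zero} gives the constant term $B_{m,n}(0) = S(m+n,n)$. Therefore
\[
\prod_{j=1}^{m} z_{j} = (-1)^{m} \cdot \frac{S(m+n,n)}{\binom{m+n}{m}} ,
\qquad\text{so}\qquad
\prod_{j=1}^{m} |z_{j}| = \frac{S(m+n,n)}{\binom{m+n}{m}} .
\]
Now fix an arbitrary root $z_{k}$ and isolate it by dividing by the other $m-1$ factors, each of which is at least $n/2$ by the inner bound already proved:
\[
|z_{k}|
=
\frac{\prod_{j=1}^{m} |z_{j}|}{\prod_{j \neq k} |z_{j}|}
\leq
\frac{S(m+n,n)/\binom{m+n}{m}}{(n/2)^{m-1}}
=
\frac{S(m+n,n)}{(n/2)^{m-1} \cdot \binom{m+n}{n}} ,
\]
which is the stated outer radius. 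The case $m=1$ falls out of the same formula with an empty product equal to $1$, and one checks directly that $B_{1,n}(x) = (n+1)\, x + \binom{n+1}{2}$ has its single root at $-n/2$, meeting both bounds with equality.

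The argument is essentially frictionless once the earlier theorem about real parts is in hand; there is no genuine obstacle to overcome, only the observation that the lower bound on all roots feeds back into the product formula from Vieta to control any individual root.
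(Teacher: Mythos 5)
Your proposal is correct and follows essentially the same route as the paper: the inner radius comes directly from Theorem~\ref{theorem_root_real_part}, and the outer radius comes from Vieta's formula (constant term $S(m+n,n)$ over leading coefficient $\binom{m+n}{m}$) combined with the lower bound $|z_k| \geq n/2$ on the other $m-1$ roots. No gaps.
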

\begin{proof}
The inner radius follows
since all roots have real part
$-n/2$ by Theorem~\ref{theorem_root_real_part}.
Let $z_{1}, z_{2}, \ldots, z_{m}$
be the roots of the box polynomial
$B_{m,n}(x)$. Then we know that
the product
$(-1)^{m} \cdot z_{1} z_{2} \cdots z_{m}$
is the ratio of the constant term
$S(m+n,n)$
over
the leading term
$\binom{m+n}{m}$, that is,
$S(m+n,n) \cdot \binom{m+n}{m}^{-1}$.
We obtain the upper bound as 
follows:
\[
|z_{j}|
=
\prod_{k \neq j} |z_{k}|^{-1}\cdot S(m+n,n) \cdot \binom{m+n}{m}^{-1}
\leq
(2/n)^{m-1} \cdot S(m+n,n) \cdot \binom{m+n}{m}^{-1}.
\qedhere
\]
\end{proof}

\begin{proposition}
The inner and outer radii of the annulus in
Theorem~\ref{theorem_roots_annulus}
are asymptotically equivalent
as $n$ tends to infinity,
that is,
$$
\frac{S(m+n,n)}{(\frac{n}{2})^{m-1} \cdot \binom{m+n}{n}} \sim n/2 .
$$
\end{proposition}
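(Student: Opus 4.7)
The plan is to reduce the asymptotic statement to a precise asymptotic for the Stirling number $S(m+n,n)$, and then establish that asymptotic via the complete symmetric function interpretation from equation~\eqref{equation_box_stirling}. Since
\[
\frac{S(m+n,n)}{(n/2)^{m-1} \binom{m+n}{n}} \sim \frac{n}{2}
\quad\Longleftrightarrow\quad
S(m+n,n) \sim (n/2)^{m} \binom{m+n}{n},
\]
and since $\binom{m+n}{n} = \prod_{j=1}^{m}(n+j)/m! \sim n^{m}/m!$ as $n \to \infty$ for fixed $m$, it suffices to prove
\[
S(m+n,n) \sim \frac{n^{2m}}{2^{m} \cdot m!}.
\]

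First I would apply equations~\eqref{equation_box_polynomial_at_zero} and~\eqref{equation_box_stirling} to rewrite
\[
S(m+n,n) = B_{m,n}(0) = h_{m}(0,1,2,\ldots,n) = h_{m}(1,2,\ldots,n) = \sum_{1 \leq i_{1} \leq i_{2} \leq \cdots \leq i_{m} \leq n} i_{1} i_{2} \cdots i_{m}.
\]
The rescaling $x_{j} = i_{j}/n$ then recognizes the right-hand side as a Riemann sum:
\[
\frac{h_{m}(1,2,\ldots,n)}{n^{2m}} \;=\; \frac{1}{n^{m}} \sum_{1 \leq i_{1} \leq \cdots \leq i_{m} \leq n} \prod_{j=1}^{m} \frac{i_{j}}{n} \;\longrightarrow\; \int_{0 \leq x_{1} \leq \cdots \leq x_{m} \leq 1} x_{1} x_{2} \cdots x_{m}\, dx_{1} \cdots dx_{m},
\]
with the convergence justified because the integrand is continuous and bounded on the closed simplex. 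Symmetrizing over the $m!$ orderings of the variables, the limit integral equals $\tfrac{1}{m!}\int_{[0,1]^{m}} x_{1} \cdots x_{m}\, dx = \tfrac{1}{m! \cdot 2^{m}}$, yielding the desired asymptotic for $S(m+n,n)$.

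Combining the two estimates gives
\[
\frac{S(m+n,n)}{(n/2)^{m-1} \binom{m+n}{n}} \;\sim\; \frac{n^{2m}/(2^{m} m!)}{(n/2)^{m-1} \cdot n^{m}/m!} \;=\; \frac{n^{2m} \cdot 2^{m-1}}{2^{m} \cdot n^{2m-1}} \;=\; \frac{n}{2}.
\]
The only substantive step is the Riemann sum estimate, which is routine. An equivalent combinatorial route identifies the leading term of $S(m+n,n)$ as coming from set partitions of $[m+n]$ into $n-m$ singletons and $m$ doubletons, of which there are $(m+n)!/\bigl(2^{m} \cdot m! \cdot (n-m)!\bigr) \sim n^{2m}/(2^{m} m!)$, with every other block-size type contributing only $O(n^{2m-1})$.
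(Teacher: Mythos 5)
Your proof is correct, and its central estimate goes by a genuinely different route than the paper's. Both arguments reduce the claim to $S(m+n,n)\sim n^{2m}/(2^{m}\cdot m!)$ (equivalently $S(m+n,n)\sim (n/2)^{m}\binom{m+n}{m}$) for fixed $m$, but the paper establishes this combinatorially: it decomposes $S(m+n,n)$ by the multiset of non-singleton block sizes, observes that the unique block type of maximal degree in $n$ is the one with $m$ doubletons and $n-m$ singletons, and counts those partitions as $\binom{n+m}{2m}\cdot(2m-1)!!\sim n^{2m}/(2^m m!)$ --- which is exactly the ``equivalent combinatorial route'' you sketch in your final sentence. Your main argument instead uses the identity $S(m+n,n)=B_{m,n}(0)=h_{m}(1,2,\ldots,n)$ from equations~\eqref{equation_box_polynomial_at_zero} and~\eqref{equation_box_stirling} and evaluates the leading behavior of $h_{m}(1,\ldots,n)$ as a Riemann sum over the simplex $0\leq x_{1}\leq\cdots\leq x_{m}\leq 1$, giving the limit $\tfrac{1}{m!\cdot 2^{m}}$; the justification (continuous bounded integrand, Jordan-measurable domain, boundary ties contributing $O(n^{m-1})$ terms) is routine, and the final ratio computation is right. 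The analytic route is attractive because it is mechanical and would generalize immediately to asymptotics of other evaluations of $h_{m}$ (or of the box polynomial at fixed $x$), whereas the paper's block-type argument stays within the set-partition framework of the rest of the paper and makes the dominant family of partitions explicit, which is also what feeds the heuristic behind the exact count $(m+n)!/(2^{m}m!(n-m)!)$ you quote.
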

\begin{proof}
Note that the Stirling number of the second kind $S(m+n,n)$
is given by
\[
S(m+n,n)
=
\sum_{\substack{\lambda_{1},\dots,\lambda_{k}\geq 2\\
                            \sum_{i=1}^{k}\lambda_{i}=k+m}}
\binom{n+m}{\sum_{i=1}^{k} \lambda_{i}}
\cdot
p(\lambda_{1}, \ldots, \lambda_{k}) ,
\]
where $\lambda_{1},\dots,\lambda_{k}$ are the cardinalities of the non-singleton blocks
and
$p(\lambda_{1}, \ldots, \lambda_{k})$ does not depend on $n$.
As a polynomial in $n$, the only term in this expression with maximal degree corresponds to~$\lambda_{1}=\cdots=\lambda_{m}=2$.
This corresponds to counting set partitions into $m$ pairs and
$n-m$ singleton blocks,
of which there are $\binom{n+m}{2m} \cdot (2m-1)!!$.
Hence the Stirling number and the leading terms are
asymptotically equivalent, that is,
\begin{align*}
S(m+n,n)
& \sim
\binom{n+m}{2m} \cdot (2m-1)!!
\sim
\frac{n^{2m} \cdot (2m-1)!!}{(2m)!}
=
\frac{n^{2m}}{2^{m} \cdot m!}
\sim
(n/2)^{m} \cdot \binom{n+m}{m} ,
\end{align*}
where we used $m! \cdot \binom{n+c}{m} \sim n^{m}$ twice.
The last statement is equivalent to the proposition.
\end{proof}

\section{The excedance  matrix}
\label{section_excedance_matrix}
The {\em excedance algebra} is defined
as the quotient
\begin{align}
\mathbb{Z}\langle\av,\bv\rangle/(\bv\av-\av\bv-\av-\bv) .
\label{equation_excedance_algebra}
\end{align}
It was introduced by Clark and Ehrenborg~\cite{Clark_Ehrenborg}
and motivated by
Ehrenborg and Steingr{\'i}msson's
study of the excedance set statistic in~\cite{Ehrenborg_Steingrimsson}.
For a permutation $\pi=\pi_{1} \pi_{2} \cdots \pi_{n+1}$
in the symmetric group $\mathfrak{S}_{n+1}$
define its {\em excedance word} $u = u_{1} u_{2} \cdots u_{n}$
by $u_{j} = \bv$ if $\pi_{j} > j$
and $u_{j} = \av$ otherwise.
In other words, the letter $\bv$ encodes where the excedances occur in
the permutation.
Let the bracket~$[u]$ denote the number of permutations
in the symmetric group with excedance word $u$.
The bracket is the excedance set statistic and
it satisfies the recursion
$[u \cdot \bv\av \cdot v]
=
[u \cdot \av\bv \cdot v]
+
[u \cdot \av \cdot v]
+
[u \cdot \bv \cdot v]$;
see~\cite[Proposition~2.1]{Ehrenborg_Steingrimsson}.
This recursion is the motivation for the excedance algebra.
Also note that we have the initial conditions
that $[\av \cdot u] = [u \cdot \bv] = [u]$ and $[1] = 1$.

Consider the polynomial $E(m,n)$ given by the sum
of all $\ab$-words with exactly $m$~$\av$'s and $n$~$\bv$'s.
For instance,
$E(2,2)$ is given by
$\av\av\bv\bv+\av\bv\av\bv+\av\bv\bv\av+\bv\av\av\bv+\bv\av\bv\av+\bv\bv\av\av$.
After the quotient of equation~\eqref{equation_excedance_algebra}
every element in the excedance algebra
can be expressed in the standard basis
$\{\av^{i}\bv^{j}\}_{i,j \geq 0}$.
Let $c^{m,n}_{i,j}$ be the coefficient
of $\av^{i} \bv^{j}$
in the expansion of $E(m,n)$, that is,
$$
E(m,n)
= 
\sum_{\substack{0 \leq i \leq m \\ 0 \leq j \leq n}}
c^{m,n}_{i,j} \cdot \av^{i} \cdot \bv^{j} .
$$
Similarly for any polynomial $u$ in the excedance algebra,
define the coefficients
$c_{i,j}(u)$ by
$$
u
= 
\sum_{0 \leq i,j}
c_{i,j}(u) \cdot \av^{i} \cdot \bv^{j} .
$$
\begin{definition}
The \emph{excedance matrix} $M(m,n)$ is the
$(m+1) \times (n+1)$ matrix whose $(i,j)$ entry is $c^{m,n}_{i,j}$,
with rows and columns indexed from $0$ to $m$ and $0$ to $n$, respectively.
\end{definition}
\begin{example}
We have that
$M(2,2)$ is the matrix
$$ M(2,2)
=
\begin{pmatrix} 0&4&7\\4&14&12\\7&12&6 \end{pmatrix} , $$
since we have the expansion
$E(2,2)
=
6 \cdot \av\av\bv\bv + 
12 \cdot \av\av\bv +
7 \cdot \av\av +
12 \cdot \av\bv\bv + 
14 \cdot \av\bv +
4 \cdot \av +
7 \cdot \bv\bv + 
4 \cdot \bv$.
\end{example}
\begin{remark}
{\rm
\label{transpose}
Note that $M(m,n)=M(n,m)^T$ by Lemma 2.2 of~\cite{Ehrenborg_Steingrimsson} and the symmetry of the construction of $E(m,n)$.
}
\end{remark}

We now come to the connection between the excedance matrix
and the box polynomials.
\begin{proposition}
The box polynomial $B_{m,n}(x)$ is given by
$\sum_{j=0}^{m} c^{m,n}_{j,n} \cdot x^{j}$.
\end{proposition}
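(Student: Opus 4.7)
The plan is to show by induction on $m+n$ that the polynomial
\[
f_{m,n}(x) := \sum_{j=0}^{m} c^{m,n}_{j,n} \cdot x^{j}
\]
satisfies the Pascal-type recursion of Proposition~\ref{proposition_box_recursion}, namely
\[
f_{m,n}(x) = x \cdot f_{m-1,n}(x) + f_{m,n-1}(x+1),
\]
together with the initial conditions $f_{m,0}(x) = x^{m}$ and $f_{0,n}(x) = 1$. Uniqueness of the solution of this recursion then forces $f_{m,n}(x) = B_{m,n}(x)$, which is exactly the claim.

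First I would split each $\ab$-word according to its leftmost letter to obtain the identity
$E(m,n) = \av \cdot E(m-1,n) + \bv \cdot E(m,n-1)$
in the excedance algebra. The term $\av \cdot E(m-1,n)$ already has its leading $\av$ in standard position, so the coefficient of $\av^{j}\bv^{n}$ in its standard-basis expansion is simply $c^{m-1,n}_{j-1,n}$, contributing $x \cdot f_{m-1,n}(x)$ to $f_{m,n}(x)$.

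The second term requires reducing $\bv \cdot \av^{i} \bv^{l}$ to the standard basis. Using the defining relation $\bv\av = \av\bv + \av + \bv$, a short induction on $i$ establishes
\[
\bv \cdot \av^{i} = (\av + 1)^{i} \cdot \bv + r_{i}(\av),
\]
where $r_{i}(\av)$ is a polynomial in $\av$ alone. Indeed, the relation splits $\bv \cdot \av^{i+1}$ as $\av \bv \av^{i} + \av^{i+1} + \bv \av^{i}$, and the induction hypothesis promotes $(\av+1)^{i}$ on the $\bv^{1}$-part to $(\av+1)^{i+1}$ while absorbing everything else into the new $r_{i+1}(\av)$. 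Multiplying on the right by $\bv^{l}$ and extracting the coefficient of $\av^{j}\bv^{n}$ in $\bv \cdot E(m,n-1)$, only terms with $l = n-1$ survive, and they contribute $\sum_{i}\binom{i}{j} \cdot c^{m,n-1}_{i,n-1}$. Summed against $x^{j}$, this is precisely $f_{m,n-1}(x+1)$.

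Adding the two contributions yields the desired recursion. The base cases are immediate since $E(m,0) = \av^{m}$ and $E(0,n) = \bv^{n}$ are already in standard form, giving $f_{m,0}(x) = x^{m}$ and $f_{0,n}(x) = 1$, matching $B_{m,n}$. The only non-routine step in this plan is the formula for $\bv \cdot \av^{i}$, which drops out cleanly from the relation; the rest is bookkeeping on the two recursions.
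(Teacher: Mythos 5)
Your proof is correct, but it takes a genuinely different route from the paper's. The paper argues directly: since only the coefficients of $\av^{j}\bv^{n}$ (full $\bv$-degree) are wanted, the term $\av$ in the rewriting rule $\bv\av=\av\bv+\av+\bv$ can be discarded, so effectively $\bv\av=(\av+1)\cdot\bv$; iterating this pushes all $\bv$'s to the right and gives
$E(m,n)\equiv\sum_{p_{0}+\cdots+p_{n}=m}\av^{p_{0}}(\av+1)^{p_{1}}\cdots(\av+n)^{p_{n}}\cdot\bv^{n}$
modulo terms of lower $\bv$-degree, after which the functional $L(\av^{i}\bv^{n})=x^{i}$ reproduces Definition~\ref{definition_box} verbatim, the compositions $(p_{0},\ldots,p_{n})$ matching the partitions $\lambda\subseteq m\times n$. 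You instead peel off the leftmost letter, prove the normal-form identity $\bv\av^{i}=(\av+1)^{i}\bv+r_{i}(\av)$ (your induction for this is sound, with $r_{i+1}(\av)=(\av+1)\,r_{i}(\av)+\av^{i+1}$), extract the top-column coefficients to get $f_{m,n}(x)=x\cdot f_{m-1,n}(x)+f_{m,n-1}(x+1)$, and conclude by uniqueness of the solution of the recursion in Proposition~\ref{proposition_box_recursion}. Both arguments rest on the same observation that the $\av$-term of the relation never contributes to the last column, but yours is modular -- it leans on the already-established Pascal-type recursion and never needs the full normal form of $E(m,n)$ -- while the paper's one-shot reduction has the advantage of exhibiting the combinatorial correspondence with partitions in the $m\times n$ box explicitly, which is arguably more illuminating about why the box polynomial appears. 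One small point worth making explicit in your write-up: the coefficient extraction from $\av\cdot E(m-1,n)$ and $\bv\cdot E(m,n-1)$ uses that the standard-basis expansion is unique (so that left multiplication can be performed on the expansions), and in the $\bv\cdot E(m,n-1)$ term the part $r_{i}(\av)\bv^{l}$ can never reach $\bv$-degree $n$ since $l\leq n-1$; you implicitly use both, and both are fine.
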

\begin{proof}
Since we are only interested in the last column of the excedance matrix,
we are only interested in terms with $n$ $\bv$'s.
In other words,
when replacing $\bv\av$ by $\av\bv+\av+\bv$
we can directly throw out the term $\av$.
That is, we replace
the relation with $\bv\av = \av\bv + \bv = (\av+1) \cdot \bv$.
Iterating this relation yields
\begin{align*}
E(m,n)
& = 
\sum_{p_{0}+p_{1}+\cdots+p_{n}=m}
\av^{p_{0}}
\cdot \bv \cdot
\av^{p_{1}}
\cdot \bv \cdots \bv \cdot
\av^{p_{n}} \\
& = 
\sum_{p_{0}+p_{1}+\cdots+p_{n}=m}
\av^{p_{0}}
\cdot 
(\av+1)^{p_{1}}
\cdots
(\av+n)^{p_{n}}
\cdot 
\bv^{n} .
\end{align*}
Now by applying the linear functional
$L(\av^{i} \bv^{n}) = x^{i}$
we have that
$L(E(m,n)) = B_{m,n}(x)$ by Definition~\ref{definition_box}.
\end{proof}

\begin{proposition}
The sum over all entries of the excedance matrix $M(m,n)$
is the Eulerian number $A(m+n+1,n+1)$.
\label{proposition_matrix_sum}
\end{proposition}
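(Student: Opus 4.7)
The plan is to apply the bracket functional $[\cdot]$ to the standard-basis expansion of $E(m,n)$, and to observe that $[\av^{i} \bv^{j}] = 1$ for every $i, j \geq 0$.

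First, because the defining relation of the excedance algebra is exactly the recursion $[u \cdot \bv\av \cdot v] = [u \cdot \av\bv \cdot v] + [u \cdot \av \cdot v] + [u \cdot \bv \cdot v]$ satisfied by the bracket, the bracket descends to a well-defined linear functional on the quotient algebra from equation~\eqref{equation_excedance_algebra}. Applying this functional to both sides of $E(m,n) = \sum c^{m,n}_{i,j} \av^{i} \bv^{j}$ yields
\[
[E(m,n)] = \sum_{\substack{0 \leq i \leq m \\ 0 \leq j \leq n}} c^{m,n}_{i,j} \cdot [\av^{i} \bv^{j}] .
\]

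Second, I would compute $[E(m,n)]$ directly from the combinatorial definition. Since $E(m,n)$ is the sum of all $\ab$-words with exactly $m$ $\av$'s and $n$ $\bv$'s, and each $[u]$ counts permutations of $[m+n+1]$ with excedance word $u$, the left-hand side counts all permutations of $[m+n+1]$ that have exactly $n$ excedances. By the classical equidistribution of excedances with descents, this count equals the Eulerian number $A(m+n+1, n+1)$.

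Third, and this is the step that requires the most care, I would verify that $[\av^{i} \bv^{j}] = 1$ for every $i, j \geq 0$. The condition $\pi_{k} \leq k$ for $k = 1, \ldots, i$ forces $\pi_{k} = k$ by induction on $k$. The remaining entries $\pi_{i+1}, \ldots, \pi_{i+j+1}$ then form a permutation of $\{i+1, \ldots, i+j+1\}$ whose first $j$ positions are excedances; a symmetric descending induction on these positions forces $\pi_{i+k} = i+k+1$ for $k = 1, \ldots, j$ and $\pi_{i+j+1} = i+1$, producing a single permutation.

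Combining the three steps yields $A(m+n+1, n+1) = [E(m,n)] = \sum c^{m,n}_{i,j}$, which is the sum of all entries of $M(m,n)$. The main obstacle is the uniqueness argument in step three; the rest follows immediately from the construction of $E(m,n)$ and the definition of the excedance algebra.
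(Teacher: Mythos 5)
Your proof is correct and follows essentially the same route as the paper: apply the bracket, which is a linear functional on the excedance algebra, to the standard-basis expansion of $E(m,n)$, note that $[E(m,n)]$ counts permutations of $[m+n+1]$ with $n$ excedances, and use $[\av^{i}\bv^{j}]=1$. Your only additions are explicit verifications (well-definedness of the bracket on the quotient and the uniqueness argument giving $[\av^{i}\bv^{j}]=1$) that the paper states without proof.
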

\begin{proof}
Note that the bracket $u \longmapsto [u]$ is a linear functional
on the excedance algebra.
Hence the bracket $[E(m,n)]$ enumerates the
number of permutations
in the symmetric group $\mathfrak{S}_{m+n+1}$
with $n$~excedances, that is, $A(m+n+1,n+1)$.
By expanding $E(m,n)$ into the standard basis
we have 
$[E(m,n)] = \sum_{i,j} c^{m,n}_{i,j} \cdot [\av^{i} \bv^{j}]$,
which is the sum of all the matrix entries since
$[\av^{i} \bv^{j}] = 1$.
\end{proof}

We apply Lemma 2.6 of~\cite{Clark_Ehrenborg}
to the sum of monomials $E(m,n)$ to obtain the following result.
\begin{lemma}
The alternating sums of the southwest to northeast diagonals
in the excedance matrix satisfy
$\sum_{i+j=k} (-1)^{i} \cdot c_{i,j}^{m,n} = 0$
for $k < m+n$.
Furthermore, the last entry is given by
$c_{m,n}^{m,n} = \binom{m+n}{m}$.
\label{lemma_diagonal_sums}
\end{lemma}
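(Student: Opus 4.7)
The plan is to apply the specialization approach suggested by the reference to Lemma~2.6 of~\cite{Clark_Ehrenborg}. Although the excedance algebra is noncommutative, I would pass to a one-variable commutative quotient designed so that the alternating diagonal sums become coefficients of a single polynomial. Concretely, I would define a ring homomorphism $\varphi : \mathbb{Z}\langle \av, \bv \rangle/(\bv\av - \av\bv - \av - \bv) \longrightarrow \mathbb{Z}[t]$ by $\varphi(\av) = -t$ and $\varphi(\bv) = t$.

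The first step is to verify that $\varphi$ is well-defined, which amounts to checking that the defining relation is killed: $\varphi(\bv\av - \av\bv - \av - \bv) = (t)(-t) - (-t)(t) - (-t) - (t) = -t^2 + t^2 + t - t = 0$. Once this is in place, I would evaluate $\varphi(E(m,n))$ in two different ways. Directly from the definition, $E(m,n)$ is the sum of the $\binom{m+n}{m}$ noncommutative monomials having exactly $m$ $\av$'s and $n$ $\bv$'s, and every such monomial maps to $(-t)^{m} t^{n} = (-1)^{m} t^{m+n}$; therefore $\varphi(E(m,n)) = (-1)^{m} \binom{m+n}{m} \cdot t^{m+n}$.

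Then, expanding $E(m,n) = \sum_{i,j} c^{m,n}_{i,j} \cdot \av^{i} \bv^{j}$ in the standard basis and applying $\varphi$ gives
\[
\varphi(E(m,n)) = \sum_{i,j} c^{m,n}_{i,j} \cdot (-1)^{i} \cdot t^{i+j} = \sum_{k \geq 0} \Bigl(\sum_{i+j=k} (-1)^{i} \cdot c^{m,n}_{i,j}\Bigr) \cdot t^{k}.
\]
Equating the two expressions for $\varphi(E(m,n))$ and comparing coefficients of $t^{k}$ yields the lemma: for $k < m+n$ the coefficient is zero, giving the alternating-diagonal identity, while for $k = m+n$ only the term $(i,j) = (m,n)$ contributes, so $(-1)^{m} \cdot c^{m,n}_{m,n} = (-1)^{m} \cdot \binom{m+n}{m}$, hence $c^{m,n}_{m,n} = \binom{m+n}{m}$.

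The only conceptual obstacle is confirming that $\varphi$ really descends to the quotient, but this reduces to the single relation check above and is presumably the content of Lemma~2.6 of~\cite{Clark_Ehrenborg} (which gives a general recipe for such evaluations); everything else is then a matter of reading off coefficients.
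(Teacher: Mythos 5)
Your proof is correct, and it takes a genuinely different route from the paper. The paper simply cites Lemma~2.6 of Clark--Ehrenborg, which gives the vanishing of the alternating diagonal sums for a \emph{single} $\ab$-monomial $u$ (with the top coefficient surviving), and then sums over the $\binom{m+n}{m}$ monomials making up $E(m,n)$. You instead construct the evaluation homomorphism $\varphi(\av)=-t$, $\varphi(\bv)=t$, verify directly that it kills the defining relation $\bv\av-\av\bv-\av-\bv$ (so it descends to the quotient, since the two-sided ideal generated by an element in the kernel maps to zero), and compare the two evaluations of $\varphi(E(m,n))$ --- once from the monomial definition, giving $(-1)^m\binom{m+n}{m}t^{m+n}$, and once from the standard-basis expansion, giving $\sum_k\bigl(\sum_{i+j=k}(-1)^i c^{m,n}_{i,j}\bigr)t^k$. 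This is self-contained (no appeal to the external lemma is actually needed; your parenthetical ``presumably the content of Lemma~2.6'' is superfluous since you do the relation check yourself), and it delivers both claims of the lemma at once: the vanishing for $k<m+n$ and, since $(m,n)$ is the only index pair on the top diagonal with $i\leq m$, $j\leq n$, the identity $c^{m,n}_{m,n}=\binom{m+n}{m}$. What the paper's route buys is brevity given the reference; what yours buys is a transparent, verifiable argument that also makes the top-entry computation explicit, which the paper's one-line proof leaves implicit.
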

\begin{proof}
Lemma 2.6 of~\cite{Clark_Ehrenborg} states that if
$u$ is an $\ab$-word with $m$ $\av$'s and $n$ $\bv$'s,
then  $\sum_{i+j=k} (-1)^{i} \cdot c_{i,j}(u)=\delta_{m+n,0}$.
Summing this results over all such monomials
yields the result.
\end{proof}

We now give a recursion for the entries of the excedance matrix.
\begin{proposition}
The entries of the excedance matrix $M(m,n)$ satisfy
\[
c^{m,n}_{i,j}
=
c^{m,n-1}_{i,j-1}
+
\sum_{k=j}^{n} \binom{k}{j} \cdot c^{m-1,n}_{i-1,k}
+
\sum_{k=j}^{n} \binom{k}{j-1} \cdot c^{m-1,n}_{i,k} .
\]
\end{proposition}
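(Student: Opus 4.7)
The plan is to decompose $E(m,n)$ according to the final letter of each $\ab$-word. Splitting off the last letter gives the identity $E(m,n) = E(m,n-1) \cdot \bv + E(m-1,n) \cdot \av$ in the excedance algebra, and the strategy is to read off the coefficient of $\av^i \bv^j$ from each summand separately. The first summand is immediate: since $\av^i \bv^{j-1} \cdot \bv = \av^i \bv^j$ is already in standard form, its contribution to $c^{m,n}_{i,j}$ is exactly $c^{m,n-1}_{i,j-1}$, which accounts for the first term of the recursion (and correctly evaluates to zero when $j = 0$).

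The remaining work is to handle $E(m-1,n) \cdot \av$, where the trailing $\av$ has to be commuted past a block $\bv^k$. For this I would first establish by induction on $k$ the closed formula
\[
\bv^{k} \av
=
\sum_{j=0}^{k} \binom{k}{j} \cdot \av \bv^{j}
+
\sum_{j=1}^{k} \binom{k}{j-1} \cdot \bv^{j}
\]
in the excedance algebra. The base case $k=0$ is trivial, and the inductive step multiplies on the left by $\bv$ and applies the defining relation $\bv\av = \av\bv + \av + \bv$; the resulting binomial coefficients combine via the Pascal identity $\binom{k}{j} + \binom{k}{j-1} = \binom{k+1}{j}$ to recover the same formula with $k$ replaced by $k+1$.

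Multiplying this formula on the left by $\av^i$ yields the standard-basis expansion of $\av^i \bv^k \cdot \av$, and applying it to each term of $E(m-1,n) = \sum_{i,k} c^{m-1,n}_{i,k} \av^i \bv^k$ gives the coefficient of $\av^i \bv^j$ in $E(m-1,n) \cdot \av$ as
\[
\sum_{k \geq j} \binom{k}{j} \cdot c^{m-1,n}_{i-1,k}
+
\sum_{k \geq j} \binom{k}{j-1} \cdot c^{m-1,n}_{i,k}.
\]
The upper limits may be replaced by $n$ since $c^{m-1,n}_{\cdot,k}=0$ whenever $k>n$. Adding the contribution $c^{m,n-1}_{i,j-1}$ from the first summand produces the claimed recursion.

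The main obstacle is the identity for $\bv^{k}\av$; once this is in hand, the rest is routine bookkeeping of coefficients. An alternative would be to apply the defining relation one step at a time to each word in $E(m,n)$, but having the closed form for $\bv^{k}\av$ makes the two binomial sums in the recursion transparent and avoids a more delicate double induction.
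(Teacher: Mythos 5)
Your proof is correct. It starts from the same decomposition the paper uses implicitly --- peel off the last letter, so that $E(m,n) = E(m,n-1)\cdot\bv + E(m-1,n)\cdot\av$, with the first summand immediately contributing $c^{m,n-1}_{i,j-1}$ --- but you handle the second summand by a different key lemma: the closed commutation formula $\bv^{k}\av = \sum_{j=0}^{k}\binom{k}{j}\cdot\av\bv^{j} + \sum_{j=1}^{k}\binom{k}{j-1}\cdot\bv^{j}$, proved by induction on $k$ using the defining relation and Pascal's rule, after which the two binomial sums follow by linearity (and the truncation at $k=n$ and the $j=0$ boundary are handled correctly). The paper instead reasons directly about the rewriting process: as the trailing $\av$ moves past $\bv^{k}$, one chooses which $\bv\av$ pairs become $\av$ rather than $\av\bv$, which gives the $\binom{k}{j}$ in the middle sum, and for the last sum a more delicate argument (the step that destroys the $\av$ must come last) combined with the identity $\sum_{l=1}^{j}\binom{k-l}{k-j}=\binom{k}{j-1}$. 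Your route trades that case analysis for a short induction and yields an explicit standard-basis expansion of $\av^{i}\bv^{k}\av$ that could be reused elsewhere, while the paper's argument makes the combinatorial meaning of each binomial coefficient (as a choice of rewriting moves) more transparent; both arrive at the same recursion.
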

\begin{proof}
One way to obtain the coefficient of $\av^{i} \bv^{j}$, or the entry $c^{m,n}_{i,j}$ of $M(m,n)$, is by post-multiplying monomials of the form $\av^{i}\bv^{j-1}$ by $\bv$, which yields the first term $c_{i,j-1}^{m,n-1}$ of the proposition. 

Note that a monomial of the form $\av^{i-1}\bv^{k}$ can yield $\av^{i} \bv^{j}$
for $k \geq j$ by post-multiplication by~$\av$.
As the $\av$ moves past each of the $k$ $\bv$'s at the end of $\av^{i-1}\bv^{k}$,
we choose $k-j$ of the $\bv\av$ pairs to become~$\av$,
and all other pairs become $\av\bv$.
This eliminates $k-j$ copies of $\bv$ and no copies of~$\av$,
leaving one term of the form $\av^{i}\bv^{j}$,
yielding the middle sum
$\sum_{k=j}^{n} \binom{k}{j} \cdot c_{i-1,k}^{m-1,n}$. 

Finally, we can obtain $\av^{i} \bv^{j}$ by post-multiplying a monomial
of the form $\av^{i} \bv^{k}$ by $\av$, for $k \geq j$.
Note that the power of $\av$ is the same in $\av^{i}\bv^{j}$ and $\av^{i}\bv^{k}$,
so, as we are post multiplying by $\av$,
we need to eliminate one copy of $\av$ and $k-j$ copies of $\bv$
as we move the $\av$ past the $k$ copies of $\bv$.
Eliminating the copy of $\av$ must be the last step,
so we choose one of the first $j$ $\bv$'s from the left to become an $\av$.
Suppose we choose the $l$'th $\bv$ to become an $\av$.
Of the remaining $k-l$ $\bv$'s to the right of the $l$'th $\bv$,
choose $k-j$ of them to become $\av$'s.
This yields the coefficient
$\sum_{l=1}^{j} \binom{k-l}{k-j} = \binom{k}{j-1}$
in the final sum of the proposition.
\end{proof}

We now make certain entries of the excedance matrix $M(m,n)$ explicit.

\begin{corollary}
\label{eulerian_entry}
The two entries
$c^{m,n}_{1,0}$ and $c^{m,n}_{0,1}$
of the excedance matrix $M(m,n)$
are given by the Eulerian number $A(m+n-1,n)$.
\end{corollary}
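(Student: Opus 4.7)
The plan is to combine three ingredients: the antidiagonal relation of Lemma~\ref{lemma_diagonal_sums}, the transpose symmetry $M(m,n) = M(n,m)^{T}$ of Remark~\ref{transpose}, and a closed-form evaluation of the column-zero sum of the excedance matrix in terms of Eulerian numbers.

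First I would observe that Lemma~\ref{lemma_diagonal_sums} applied to the antidiagonal $k = 1$ forces $c^{m,n}_{1,0} = c^{m,n}_{0,1}$, so it suffices to evaluate one of them. Using the transpose symmetry, $c^{m,n}_{1,0} = c^{n,m}_{0,1}$. Applying the recursion of the preceding proposition to $c^{n,m}_{0,1}$, the constant term $c^{n, m-1}_{0,0}$ vanishes by Lemma~\ref{lemma_diagonal_sums} at $k=0$, while the sum indexed by $c^{n-1,m}_{-1,k}$ is empty. What remains is
\[
c^{n,m}_{0,1} = \sum_{k=1}^{m} c^{n-1,m}_{0,k} = \sum_{k=0}^{m} c^{m,n-1}_{k,0},
\]
where the second equality uses transpose symmetry one more time. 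The right-hand side is the sum $C_{0}(m,n-1)$ of the zeroth column of $M(m, n-1)$.

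Thus the claim reduces to the identity $C_{0}(M, N) = A(M+N, N+1)$ for all $M + N \geq 1$. I would prove this by introducing the linear functional $\phi$ on the excedance algebra defined on the standard basis by $\phi(\av^{i}\bv^{j}) = \delta_{j,0}$, so that $\phi(E(M, N)) = C_{0}(M, N)$. Splitting $E(M, N) = E(M-1, N) \cdot \av + E(M, N-1) \cdot \bv$, the second summand contributes nothing under $\phi$ since right multiplication by $\bv$ sends $\av^{i}\bv^{j}$ to $\av^{i}\bv^{j+1}$, which has positive $\bv$-power. For the first summand the key subclaim is $\phi(\av^{i}\bv^{j}\av) = 1$ for all $i, j \geq 0$. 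Because left multiplication by $\av$ preserves the $\bv$-power of a canonical monomial, $\phi(\av \cdot Y) = \phi(Y)$ for every $Y$, so the subclaim reduces to showing $\phi(\bv^{j}\av) = 1$; this follows by induction on $j$ from $\bv^{j}\av = \bv^{j-1}\av\bv + \bv^{j-1}\av + \bv^{j}$, where only the middle term survives $\phi$. The subclaim then gives $\phi(E(M-1, N) \cdot \av) = \sum_{i,j} c^{M-1,N}_{i,j}$, which by Proposition~\ref{proposition_matrix_sum} equals $A(M+N, N+1)$, finishing the reduction.

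The main obstacle is isolating the correct zeroth-column functional $\phi$ and proving the uniform identity $\phi(\av^{i}\bv^{j}\av) = 1$; once these are in hand, the remainder is a clean cascade of the recursion, transpose symmetry, and the evaluation of the total matrix sum as an Eulerian number from Proposition~\ref{proposition_matrix_sum}.
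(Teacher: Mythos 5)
Your argument is correct, but it follows a genuinely different route from the paper. The paper decomposes $E(m,n)$ according to both its first and its last letter into the four pieces $\av E(m-2,n)\av + \av E(m-1,n-1)\bv + \bv E(m-1,n-1)\av + \bv E(m,n-2)\bv$, kills three of them by the vanishing observations $c_{1,0}(v\cdot\bv)=0$ and $c_{1,0}(\av\cdot v)=0$, and then invokes Corollary~2.5 of Clark--Ehrenborg, which states $c_{1,0}(\bv\cdot v\cdot\av)=[v]$, to identify $c_{1,0}(E(m,n))=[E(m-1,n-1)]=A(m+n-1,n)$; the identity $c^{m,n}_{1,0}=c^{m,n}_{0,1}$ comes from Lemma~\ref{lemma_diagonal_sums}, just as in your proof. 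You avoid the external Clark--Ehrenborg corollary entirely: you reduce via the entry recursion and transpose symmetry to the zeroth-column sum of $M(m,n-1)$, and then prove the clean auxiliary identity $\sum_{i} c^{M,N}_{i,0}=A(M+N,N+1)$ by splitting $E(M,N)=E(M-1,N)\av+E(M,N-1)\bv$, checking $\phi(\av^{i}\bv^{j}\av)=1$ and $\phi(Y\bv)=0$ for your functional $\phi$, and feeding the result into Proposition~\ref{proposition_matrix_sum}; I verified the chain (e.g.\ for $M(2,2)$ the zeroth-column sum is $0+4+7=11=A(4,3)$). What your route buys is self-containedness within the paper plus the column-sum identity as a reusable byproduct; what it costs is length, and two small points you should state precisely: the middle sum in the recursion is not ``empty'' but has vanishing coefficients $c^{n-1,m}_{-1,k}=0$, and the vanishing of $c^{n,m-1}_{0,0}$ and $c^{m,n-1}_{0,0}$ uses Lemma~\ref{lemma_diagonal_sums} at $k=0$, which requires $m+n\geq 2$, i.e.\ the implicit hypothesis $m,n\geq 1$ under which the corollary is stated.
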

\begin{proof}
For a polynomial $v$ in the excedance algebra, observe that
when expanding $v \cdot \bv$ into the standard
basis, there is no $\av$ term,
that is, $c_{1,0}(v \cdot \bv) = 0$.
If we further assume that $v$ has
no constant term, 
we obtain $c_{1,0}(\av \cdot v) = 0$.
Finally,
Corollary~2.5 in~\cite{Clark_Ehrenborg}
states that
$c_{1,0}(\bv \cdot v \cdot \av) = [v]$.
(Note that their indexes are reversed,
that is, our $c_{i,j}(u)$ is their $c_{m-i,n-j}(u)$.)
Using the identity
\begin{align*}
E(m,n)
& =
\av \cdot E(m-2,n) \cdot \av
+ 
\av \cdot E(m-1,n-1) \cdot \bv \\
& +
\bv \cdot E(m-1,n-1) \cdot \av
+ 
\bv \cdot E(m,n-2) \cdot \bv ,
\end{align*}
and applying the linear functional $u \longmapsto c_{1,0}(u)$
we obtain
\begin{align*}
c_{1,0}(E(m,n))
=
c_{1,0}(\bv \cdot E(m-1,n-1) \cdot \av)
=
[E(m-1,n-1)] .
\end{align*}
This last expression enumerates the number of permutations
in the symmetric group
$\mathfrak{S}_{m+n-1}$ with $n-1$ excedances.
Finally, Lemma~\ref{lemma_diagonal_sums}
implies
$c^{m,n}_{1,0} = c^{m,n}_{0,1}$.
\end{proof}

By Proposition~\ref{proposition_closed_form_stirling_binomial}
we directly have
\begin{corollary}
The entries in the last column of the excedance matrix
$M(m,n)$ are given by
$c^{m,n}_{j,n} = \binom{m+n}{j} \cdot S(m+n-j,n)$, 
while the entries in the last row are given by~$c^{m,n}_{m,j} = \binom{m+n}{j} \cdot S(m+n-j,m)$.
\end{corollary}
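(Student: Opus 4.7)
The plan is to read the corollary as a direct consequence of two already-established facts: the identification of the last column of the excedance matrix with the box polynomial, and the explicit Stirling-binomial expansion of the box polynomial.

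First, I would invoke the proposition that $B_{m,n}(x) = \sum_{j=0}^{m} c^{m,n}_{j,n} \cdot x^{j}$. This exhibits the entries of the last column of $M(m,n)$ as exactly the coefficients of the box polynomial $B_{m,n}(x)$. Next, I would recall Proposition~\ref{proposition_closed_form_stirling_binomial}, which gives
\[
B_{m,n}(x) = \sum_{j=0}^{m} \binom{m+n}{j} \cdot S(m+n-j,n) \cdot x^{j}.
\]
Comparing coefficients of $x^{j}$ in these two expressions yields the stated formula $c^{m,n}_{j,n} = \binom{m+n}{j} \cdot S(m+n-j,n)$ for the last column.

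For the last row, I would appeal to Remark~\ref{transpose}, which records the transpose symmetry $M(m,n) = M(n,m)^{T}$. This translates to the entrywise identity $c^{m,n}_{m,j} = c^{n,m}_{j,m}$. Applying the last-column formula just established, but now to the matrix $M(n,m)$ with the roles of $m$ and $n$ swapped, gives $c^{n,m}_{j,m} = \binom{n+m}{j} \cdot S(n+m-j,m)$, which is precisely the claimed expression $\binom{m+n}{j} \cdot S(m+n-j,m)$.

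There is essentially no obstacle here: the entire content of the corollary is the conjunction of the preceding proposition, Proposition~\ref{proposition_closed_form_stirling_binomial}, and the transpose symmetry from Remark~\ref{transpose}. The only thing to be careful about is making sure the indexing conventions agree when swapping $m$ and $n$, so that the binomial and Stirling factors line up correctly in the last row formula.
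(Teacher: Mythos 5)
Your proposal is correct and matches the paper's intended argument: the paper deduces the corollary directly from Proposition~\ref{proposition_closed_form_stirling_binomial} together with the earlier identification $B_{m,n}(x)=\sum_{j} c^{m,n}_{j,n}\cdot x^{j}$, with the last-row formula following from the transpose symmetry $M(m,n)=M(n,m)^{T}$ of Remark~\ref{transpose}, exactly as you spell out.
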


For instance, the $(m-1,n-1)$ entry of the excedance matrix
is given by
\begin{align*}
c^{m,n}_{m-1,n-1}
& =
c^{m,n}_{m-2,n}
+
c^{m,n}_{m,n-2} \\
& =
\binom{m+n}{m-2} \cdot S(n+2,n)
+
\binom{m+n}{n-2} \cdot S(m+2,m) \\
& =
\binom{m+n}{m}
\cdot
\frac{m \cdot (m-1)}{(n+2) \cdot (n+1)}
\cdot
\left(3 \cdot \binom{n+2}{4} + \binom{n+2}{3} \right) \\
& +
\binom{m+n}{m}
\cdot
\frac{n \cdot (n-1)}{(m+2) \cdot (m+1)}
\cdot
\left(3 \cdot \binom{m+2}{4} + \binom{m+2}{3} \right) \\
& =
\binom{m+n}{m}
\cdot
m 
\cdot
n
\cdot 
\frac{3 m n - m - n}{12}.
\end{align*}

\section{Concluding remarks}

A number of questions and conjectures concerning the box polynomials and the excedance matrix remain unanswered.

\begin{question}
{\rm
Given a Schur function $s_{\lambda}(x_{1}, x_{2}, \ldots)$
what can be said about the properties of the one-variable
polynomial
$s_{\lambda}(x, x+1, \ldots, x+n)$?
For instance, are there any results on the location
of the roots?
}
\end{question}

\begin{question}
{\rm
As the box polynomial $B_{m,n}(x)$ is defined in 
terms of all partitions that fit in the $m$ by 
$n$ box, is there a Schubert calculus 
interpretation of the box polynomial? 
In other words, is the ring structure of the 
cohomology of the Grassmanian reflected in the 
algebraic properties of the box polynomials 
$B_{m,n}(x)$?
}
\end{question}

\begin{question}
{\rm
Let $L_{n}$ denote the graph known
as the cyclic ladder, that is, the product
of the cycle $C_{n}$ and the complete graph $K_{2}$.
The chromatic polynomial of $L_{n}$ is given by
$$
\chi(L_{n}; x)
  =
(x^{2} - 3 \cdot x + 3)^{n}
+
(x-1)
\cdot
\left(
(1-x)^{n} + (3-x)^{n}
\right)
+
x^{2} - 3 \cdot x + 1 ;
$$
see~\cite{Biggs_2001}
and~\cite{Biggs_Damerell_Sands_1972}.
Is there an explicit formula for the number of
partitions of the vertex set of $L_{n}$ into
$k$ blocks, which are independent sets,
that is, the number
$S(L_{n},k)$?
}
\end{question}

\begin{question}
{\rm
By combining Propositions~\ref{proposition_box_at_minus_r} and~\ref{proposition_odd_set_partitions}
when $n$ is even,
we know that the number of set partitions
of the set $[m+n]$ into $n$ blocks of odd
size is
$2^{m}$ times the number of set partitions
in standard form
of the set $[m+n/2]$ into
$n/2$ blocks such that the
minimal element of the $i$th block
has the same parity as $i$.
Is there a more combinatorial
proof of this fact,
for instance, a map where each
fiber has cardinality $2^{m}$?
}
\end{question}

\begin{question}
{\rm
Is there a sign-reversing involution proof of Proposition~\ref{corollary_T_m_n_stirling_numbers}? 
}
\end{question}

\begin{question}
{\rm
Is there a combinatorial proof of Theorem~\ref{theorem_root_real_part} using the partition interpretation of the box polynomial?
} 
\end{question}

Theorem~\ref{theorem_imaginary_bound} shows that
the imaginary part of the roots of
the box polynomial $B_{m,n}(x)$
is bounded above by
$O(m \cdot n)$.
However, computational data suggests
there is a sharper upper bound.

\begin{conjecture}
{\rm
The imaginary part of the roots of the box polynomial
$B_{m,n}(x)$ is bounded by $O(m \cdot \sqrt{n})$.
}
\end{conjecture}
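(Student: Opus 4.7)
The plan is to reduce the zero-finding problem for $B_{m,n}(x)$ to a coefficient extraction from an entire generating function, and then apply Hermite-polynomial asymptotics. Starting from the operator representation $B_{m,n}(x) = \Delta^{n}(x^{m+n})/n!$, I would substitute $x = -n/2 + iy$ and expand each term $(k - n/2 + iy)^{m+n}$ via the binomial theorem. After swapping the order of summation and using the identity
\[
\sum_{k=0}^{n}(-1)^{n-k}\binom{n}{k}\frac{(k-n/2)^{p}}{p!} = [t^{p}](2\sinh(t/2))^{n},
\]
which follows from $(e^{t}-1)^{n}e^{-nt/2} = (2\sinh(t/2))^{n}$, one obtains the key formula
\[
B_{m,n}(-n/2 + iy) = \binom{m+n}{n} \cdot [t^{m}]\, e^{iyt}\, g(t)^{n},
\]
where $g(t) = 2\sinh(t/2)/t = 1 + t^{2}/24 + O(t^{4})$ is an even entire function bounded away from $0$ on a neighborhood of the real axis.

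Since $g(t)^{n} = \exp(nt^{2}/24)(1 + O(nt^{4}))$, in the leading-order approximation the zero condition becomes $[t^{m}]\, e^{iyt + nt^{2}/24} = 0$. Using the Hermite generating function $e^{2zw - w^{2}} = \sum_{m \geq 0} H_{m}(z) w^{m}/m!$, a short calculation identifies this coefficient (up to a nonzero constant) with $H_{m}(y\sqrt{6/n})$. The classical bound $|\zeta| < \sqrt{2m+1}$ for zeros $\zeta$ of $H_{m}$ then predicts $|y| = O(\sqrt{mn})$, which is in fact sharper than the conjectured $O(m\sqrt{n})$. This heuristic already matches the closed forms listed in Table~\ref{table_box_imag_roots}, where the imaginary parts of the roots are all $O(\sqrt{n})$ with $m$-dependent constants.

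To make the heuristic rigorous I would use the Cauchy integral
\[
[t^{m}]\, e^{iyt}\, g(t)^{n} = \frac{1}{2\pi i}\oint \frac{e^{iyt}\, g(t)^{n}}{t^{m+1}}\, dt,
\]
deforming the contour to a circle of radius $\rho$ near the saddle point of the integrand. The saddle equation $iy + n\,g'(t)/g(t) = (m+1)/t$ yields $|t| = \Theta(\sqrt{m/n})$ when $|y| = \Theta(\sqrt{mn})$, and the contour contribution away from the saddle decays. One then shows that for $|y| > Cm\sqrt{n}$ with $C$ a sufficiently large absolute constant, the phase $\arg(e^{iyt}g(t)^{n}t^{-m-1})$ stays within a proper arc and the integral cannot vanish.

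The main obstacle is uniformity in both parameters. The approximation $g(t)^{n} \approx e^{nt^{2}/24}$ requires $n|t|^{4} \ll 1$, i.e.\ $|t| \ll n^{-1/4}$, while the natural saddle radius is $\sqrt{m/n}$. These are compatible only when $m \ll \sqrt{n}$. For the complementary regime $m \gtrsim \sqrt{n}$, the higher-order coefficients of $\log g(t) = t^{2}/24 - t^{4}/2880 + \cdots$ must be controlled directly, for instance by showing that $[t^{2l}]g(t)^{n} \leq C^{l}(n/24)^{l}/l!$ uniformly, which follows from the log-concavity of the Taylor coefficients of $g$ and a careful multinomial estimate. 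Handling this regime is where most of the technical work would lie, and is where the weaker bound $O(m\sqrt{n})$ (rather than the asymptotic $O(\sqrt{mn})$) would naturally appear as the achievable rigorous estimate.
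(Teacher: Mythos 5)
Two things up front: this statement is one of the paper's open conjectures, not a theorem --- the paper proves only the weaker bound $mn/\pi$ (Theorem~\ref{theorem_imaginary_bound}) --- so there is no proof of record to compare against, and your argument must stand on its own. Your reduction is correct and genuinely promising: from $B_{m,n}(x)=\Delta^n(x^{m+n})/n!$ one indeed gets $B_{m,n}(-n/2+iy)=\frac{(m+n)!}{n!}\,[t^m]\,e^{iyt}g(t)^n$ with $g(t)=2\sinh(t/2)/t$ (your constant $\binom{m+n}{n}$ is off by a factor $m!$, which is harmless for locating zeros), and the Hermite identification reproduces the $m=2,3$ rows of Table~\ref{table_box_imag_roots} exactly.

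However, the proposal does not close, and one of its quantitative claims is false. The prediction $|y|=O(\sqrt{mn})$, which you describe as sharper than the conjecture, already fails for $n=1$: the paper's own example shows the largest imaginary part of a root of $B_{m,1}(x)$ is about $(m+1)/(2\pi)$, i.e.\ $\Theta(m)$, which exceeds $C\sqrt{m}$ for large $m$. This shows that in the regime $m\gtrsim\sqrt{n}$ the Gaussian/Hermite model is not merely ``technically delicate'' --- it is the wrong leading-order model, because the relevant saddle of $e^{iyt}g(t)^n t^{-m-1}$ moves out to scales where $\sinh(t/2)$ is in its exponential rather than quadratic regime; any completed proof must carry out the saddle-point analysis with the full $\log g$, and the resulting bound must interpolate between $\sqrt{mn}$ and $m$ (consistent with $O(m\sqrt{n})$, inconsistent with $O(\sqrt{mn})$). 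Second, the decisive non-vanishing step --- ``for $|y|>Cm\sqrt{n}$ the phase stays within a proper arc and the integral cannot vanish'' --- is asserted, not proved; an argument of exactly this shape applied to the defining sum is how the paper obtains $mn/\pi$, and improving it to $m\sqrt{n}$ uniformly in both parameters is the entire content of the conjecture. Third, even in the favorable regime $m=o(\sqrt{n})$, knowing that the Hermite approximant is zero-free for $|y|>\sqrt{(2m+1)n/6}$ does not by itself exclude zeros of $B_{m,n}$ there: you need a quantitative comparison (a lower bound on $|[t^m]e^{iyt}g(t)^n|$ outside the claimed region, or a Rouch\'e/winding-number argument with error terms uniform in $m$ and $n$), and no such estimate is supplied. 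In short, this is an attractive program built on a correct and nontrivial identity, but it is a program rather than a proof: the regime $m\gtrsim\sqrt{n}$ is untouched, and the heuristic's own prediction is contradicted there by the paper's $n=1$ example.
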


A matrix is \emph{totally nonnegative} if the determinant of every square submatrix is nonnegative. 
\begin{conjecture}
{\rm Let $N(m,n)$ be the matrix obtained by flipping the excedance matrix $M(m,n)$ upside down, that is, its $i$th row is the $(m-i+1)$st row of $M(m,n)$.
Then the matrix $N(m,n)$ is totally nonnegative.
}
\end{conjecture}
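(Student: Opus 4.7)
The plan is to establish the conjecture via the Lindstr\"om--Gessel--Viennot lemma, by exhibiting a weighted planar acyclic network whose path enumeration reproduces the entries of $N(m,n)$. If one can arrange $m+1$ sources and $n+1$ sinks on the boundary so that the weighted number of directed paths from source $i$ to sink $j$ equals $N(m,n)_{i,j}$, then every minor counts families of vertex-disjoint paths and is automatically nonnegative. The need to flip $M(m,n)$ upside down is a good sign here: for instance, the $2\times 2$ minor of $M(2,2)$ on rows and columns $\{0,1\}$ equals $0\cdot 14 - 4\cdot 4 = -16$, so the original matrix is not totally nonnegative, and the flip strongly suggests that in any planar model the natural left-to-right ordering of sources runs opposite to the row ordering of $M$.

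The first step is to find a cleaner combinatorial model for the coefficients $c^{m,n}_{i,j}$. The rewrite rule $\bv\av = \av\bv + \av + \bv$ expresses each $\ab$-word with $m$ copies of $\av$ and $n$ copies of $\bv$ as a sum over decorated reduction histories, where at each application of the rule one of the three branches is chosen. The second proof of Theorem~\ref{theorem_operator_interpretation} already encodes such histories for the last column of $M(m,n)$ as lattice paths in an $(m+n)$-step grid; I would extend that encoding to arbitrary columns by additionally recording which $\bv$'s are annihilated during reduction. The binomial weights $\binom{k}{j}$ and $\binom{k}{j-1}$ appearing in the recursion for $c^{m,n}_{i,j}$ are precisely those expected when $j$ of $k$ trailing $\bv$'s are chosen as survivors, which is encouraging evidence that such an encoding exists.

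A complementary strategy is to look for a bidiagonal factorization of $N(m,n)$ via Neville elimination. Since the first row and last column of $N(m,n)$ admit the explicit binomial--Stirling formulas recorded in Section~\ref{section_excedance_matrix}, one can anchor the elimination along the boundary and propagate using the recursion for $c^{m,n}_{i,j}$. I would begin by computing $N(m,n)$ and its Neville decomposition for $m,n \leq 5$, both to confirm the conjecture empirically and to look for a uniform factorization pattern; such a pattern, once guessed, could then be proved by induction on $m+n$ directly from the recursion.

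The main obstacle, for either approach, is that the recursion mixes three qualitatively different terms with binomial weights summed over a range; turning this into a single local step on a planar graph (or into a sparse bidiagonal factor) requires identifying exactly the right partial order on sources and sinks so that the resulting model produces no crossings. I expect this planarity check — which is what ultimately distinguishes $N(m,n)$ from the non-totally-nonnegative $M(m,n)$ — to be the technical heart of the argument.
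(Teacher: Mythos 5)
There is a genuine gap here, and it is worth being clear about its nature: the statement you are addressing is left as an open conjecture in the paper (the authors offer only computational evidence, noting that all minors appear to be positive except the single entry $c^{m,n}_{0,0}=0$), and your submission is likewise not a proof but a research plan. Your preliminary observations are sound --- the $2\times 2$ minor of $M(2,2)$ on rows and columns $\{0,1\}$ is indeed $0\cdot 14-4\cdot 4=-16$, so flipping is genuinely necessary, and both the Lindstr\"om--Gessel--Viennot route and a Neville/bidiagonal factorization are the standard ways one would hope to certify total nonnegativity. But neither strategy is carried out. For the path-model approach you never construct the planar acyclic network: no vertex set, no edge weights, no placement of the $m+1$ sources and $n+1$ sinks, and no verification that the weighted path count from source $i$ to sink $j$ equals $c^{m,n}_{m-i,j}$. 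The remark that the binomial weights $\binom{k}{j}$ and $\binom{k}{j-1}$ in the recursion ``are precisely those expected'' is suggestive but does nothing to show that the three qualitatively different terms of the recursion can be realized as a single local transition in a planar graph with a consistent ordering of sources and sinks; as you yourself say, this planarity check is the technical heart, and it is exactly the part that is absent. The factorization approach is in the same state: no bidiagonal factors are exhibited, no closed form for the Neville pivots is guessed, and no inductive argument from the recursion for $c^{m,n}_{i,j}$ is given.

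Concretely, what would be needed to close the gap is one of the following: an explicit statement and proof of the form ``$N(m,n)_{i,j}$ equals the number of weighted paths from source $i$ to sink $j$ in the following planar network,'' proved for instance by showing both sides satisfy the paper's recursion $c^{m,n}_{i,j}=c^{m,n-1}_{i,j-1}+\sum_{k\geq j}\binom{k}{j}c^{m-1,n}_{i-1,k}+\sum_{k\geq j}\binom{k}{j-1}c^{m-1,n}_{i,k}$ with the same boundary values; or an explicit factorization $N(m,n)=L_{1}\cdots L_{p}DU_{1}\cdots U_{q}$ into nonnegative bidiagonal matrices, again verified against the recursion. Until such a construction is produced and checked (at minimum against $M(2,2)$ and the Eulerian/Stirling boundary data of Section~\ref{section_excedance_matrix}), the conjecture remains open, and your text should be presented as a plan of attack rather than a proof.
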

Computational evidence supports this conjecture.
In fact, all of the determinants of square submatrices appear to be positive, except for the $1 \times 1$ matrix $c_{0,0}^{m,n} = 0$.

\begin{remark}
{\rm
Consider the polynomials whose roots come from columns of an excedance matrix other than the last; that is, polynomials $\sum_{j=0}^{m} c^{m,n}_{j,k} \cdot x^{j}$ for $k\neq n$. These polynomials have roots whose real parts are similar, though not equal like those of the roots of the box polynomial (see Figure~\ref{figure_column_box_roots}.) Furthermore, the smaller $k$ is, the smaller the roots are.
}
\end{remark}
\begin{figure}[ht]
\begin{center}
\includegraphics[scale=.2]{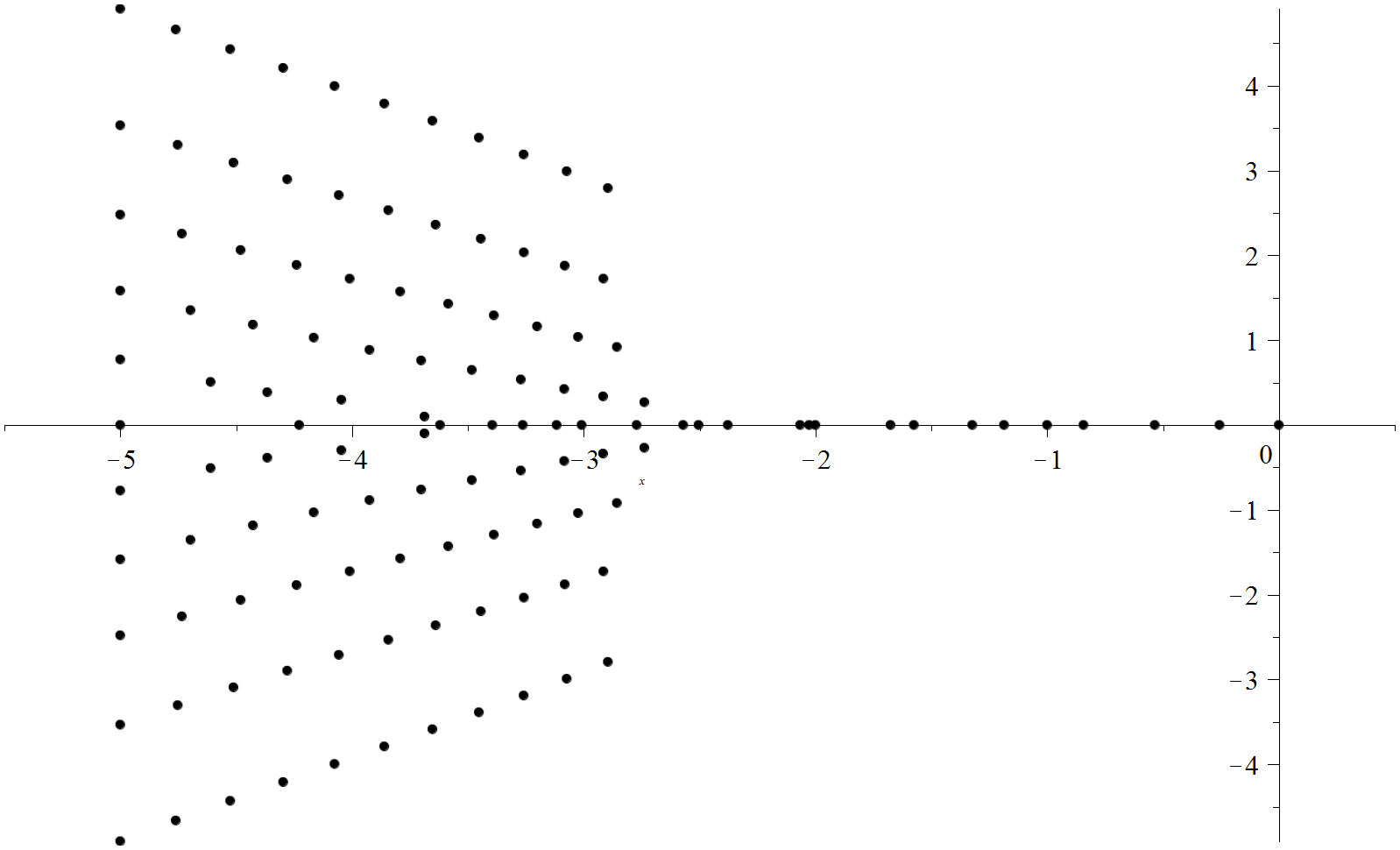}
\end{center}
\caption{The roots of the 10 polynomials whose coefficients are the columns of the excedance matrix $M(11,10)$, plotted in the complex plane.}
\label{figure_column_box_roots}
\end{figure}

\begin{question}
{\rm
Can the entire excedance matrix be characterized in terms of coefficients of polynomials obtained using the difference operators $\Delta_x$ and $\Delta_y$, just as its rightmost column, yielding the box polynomials, is defined by $\Delta_x$? Doing so could make each entry of the excedance matrix explicit.
}
\end{question}

\begin{question}
{\rm
Is there a way to prove that the Eulerian 
numbers are
unimodal using the excedance set statistic? One 
possible
approach is as follows. Let $\mathcal{E}(m,n)$ 
be the set of all
$\ab$-monomials with $m$ $\av$'s and $n$ 
$\bv$'s.
Is there an injective function
$\varphi : \mathcal{ E}(m,n) \longrightarrow 
\mathcal{E}(m+1,n-1)$
for all $m<n$ such that $[u] \leq [\varphi(u)]$?
If such a function $\varphi$ exists, the 
unimodality of the Eulerian numbers follows by 
summing over all monomials $u$ in 
$\mathcal{E}(m,n)$. 

One potential candidate function $\varphi(u)$ is defined by
factoring $u$ as $v \cdot w$, where $v$ has
exactly one more $\av$ than $\bv$'s.
Then let $\varphi(v \cdot w) = \overline{v}^{*} \cdot w$,
where $*$ reverses the word and the bar exchanges
$\av$'s and $\bv$'s.
This function works for small length words, but there is a counterexample at length~$22$,
namely:
\begin{align*}
u &= \bv^{5}\av\bv\av\bv\av^{5}\bv\av\bv\av\bv\av^{2} \cdot \av, \\
\varphi(u) &= 
\bv^{2}\av\bv\av\bv\av\bv^{5}\av\bv\av\bv\av^{5} \cdot \av,
\end{align*}
and $[u] =150803880738467413$ which
is greater than
$[\varphi(u)] = 150373062932169969$.
}
\end{question}

\section*{Acknowledgements}

The authors thank Richard Stanley for the reference~\cite{Callan}.
This work was supported by a grant from the Simons Foundation (\#429370, Richard Ehrenborg).

\bibliographystyle{plain}
\bibliography{bibliography.bib}

\bigskip

\noindent
{\em R.\ Ehrenborg, A.\ Happ,
Department of Mathematics,
University of Kentucky,
Lexington, KY 40506-0027,}
{\tt richard.ehrenborg@uky.edu},
{\tt alex.happ@uky.edu} \\

\noindent
{\em D.\ Hedmark,
Department of Mathematics,
Montgomery Bell Academy,
4001 Harding Road,
Nashville, TN 37205-1902,}
{\tt dustin.hedmark@montgomerybell.edu} \\

\noindent
{\em C.\ Hettle, 
School of Mathematics, 
Georgia Institute of Technology, 
Atlanta, GA 30332-0160,} \\
{\tt chettle@gatech.edu} \\

\end{document}